\newcolumntype{C}{>{\centering\arraybackslash}X} % fixed-sized columns, vertically centered by default
\crefname{equation}{}{} \creflabelformat{equation}{(#2#1#3)}
\crefname{section}{section}{sections} \Crefname{section}{Section}{Sections}
\crefname{subsection}{}{} \Crefname{subsection}{Section}{Sections}
\crefname{enumi}{}{}
\Crefname{appx}{}{}
\setlist[itemize,1]{label=$-$}
\setlist[itemize,2]{label=$-$}
\setlist[itemize,3]{label=$-$}
\RenewDocumentCommand{\paragraph}{sO{#3}m}{%
  \IfBooleanTF{#1}
    {\latexparagraph*{\maybe@addperiod{#3}}}
    {\latexparagraph[#2]{\maybe@addperiod{#3}}}%
}
\newcommand{\maybe@addperiod}[1]{%
  #1\@addpunct{.}%
}
\newcommand{\ncite}{\@ifstar{\@ncite}{\@@ncite}}
\newcommand{\@ncite}[2][]{\citeauthor*{#2} \ifx#1\empty\cite{#2}\else\cite[#1]{#2}\fi}
\newcommand{\@@ncite}[2][]{\citeauthor{#2} \ifx#1\empty\cite{#2}\else\cite[#1]{#2}\fi}
\newtheorem{theorem}{Theorem}[section] \crefname{theorem}{theorem}{Theorem} \Crefname{theorem}{Theorem}{Theorems}
\newtheorem*{theorem*}{Theorem}
\newtheorem{lem}[theorem]{Lemma} \crefname{lem}{lemma}{lemmata} \Crefname{lem}{Lemma}{Lemmata}
\newtheorem*{lem*}{Lemma}
\newtheorem{prop}[theorem]{Proposition} \crefname{prop}{proposition}{propositions} \Crefname{prop}{Proposition}{Propositions}
\newtheorem*{prop*}{Proposition}
\newtheorem{cor}[theorem]{Corollary} \crefname{cor}{corollary}{corollaries} \Crefname{cor}{Corollary}{Corollaries}
\newtheorem*{cor*}{Corollary}
\theoremstyle{definition}
\newtheorem{definition}[theorem]{Definition} \crefname{definition}{definition}{definitions} \Crefname{definition}{Definition}{Definitions}
\newtheorem*{definition*}{Definition}
 \crefname{conjecture}{conjecture}{conjectures} \Crefname{conjecture}{Conjecture}{Conjectures}
\newtheorem*{conjecture*}{Conjecture}
\newtheoremstyle{theassumptionstyle}{\topsep}{\topsep}{}{}{}{}{.5em}{\textbf{\thmname{#1} [A\thmnumber{#2}]}\thmnote{ (#3)}.}
\theoremstyle{theassumptionstyle}  
\newtheorem{hyp}[theorem]{Assumption} \crefname{hyp}{}{} \Crefname{hyp}{}{} \creflabelformat{hyp}{\textbf{[#2A#1#3]}}
\theoremstyle{remark}
 \crefname{example}{example}{examples} \crefname{example}{Example}{Examples}
\newtheorem{rem}[theorem]{Remark} \crefname{rem}{remark}{remarks} \Crefname{rem}{Remark}{Remarks} 
\newtheorem*{rem*}{Remark}
\newcommand{\mysubcaption}[2][0.8]{\vspace*{5pt}\begin{minipage}{#1\linewidth}\begin{center}\footnotesize\emph{#2}\end{center}\end{minipage}}
\newcommand{\highlight}[2]{\begin{center}\small\begin{minipage}{0.9\textwidth}\begin{center}\textbf{#1}\end{center}\vspace*{-7pt}#2\end{minipage}\end{center}}
\DeclareMathOperator*{\dom}{dom\,}
\DeclareMathOperator*{\supp}{supp\,}
\DeclareMathOperator*{\diam}{diam\,}
\DeclareMathOperator*{\conv}{conv\,}
\DeclareMathOperator*{\vect}{vect\,}
\DeclareMathOperator*{\graph}{Graph}
\newcommand{\ind}{{\textrm{1\hspace{-0.6ex}I}}}% indicatrice (1 double barre) (courtesy of OB)
\newcommand{\st}{\ \middle| \ } % such that well-sized bar
\newcommand{\bdot}{{\boldsymbol{\cdot}}} % bold dot. 
\newcommand{\resmes}{\mathbin{\vrule height 1.6ex depth 0pt width 0.18ex\vrule height 0.15ex depth 0pt width 0.7ex}} % restriction of measures
\newcommand{\stephighlight}[1]{\medskip\noindent\textbf{#1\@addpunct{.}}\quad} % Highlight beginning of steps in proofs
\newcommand{\Sp}{\Omega}
\DeclareMathOperator{\T}{T} % straight T for tangent spaces. Picky me
\DeclareMathOperator{\Psp}{{\mathscr{P}}} % space of proba measures
\DeclareMathOperator{\Tan}{\mathbf{Tan}} % general tangent cone
\DeclareMathOperator{\Sol}{\mathbf{Sol}} % general solenoidal space
\DeclareMathOperator{\Set}{\mathbf{Set}} % generic closed convex cone
\DeclareMathOperator{\M}{{\mathscr{M}}} % measures
\DeclareMathOperator*{\vspan}{span\,} 
\newcommand{\bary}[2]{{\text{Bary}_{#2}\left(#1\right)}} % barycenter. Arguments: measure, space
\newcommand{\Zaitchek}{Zaj\'i\v{c}ek\xspace}
\newcommand{\DC}[1]{DC\textsubscript{$#1$}} % c--c hypersurface of dimension #1
\newcommand{\sDC}[1]{$\sigma-$\DC{$#1$}} % covered by countably many c--c hypersurface of dimension #1
\newcommand{\filler}[1]{#1} % filler text to mind
\title{Locality of centred tangent cones in the Wasserstein space}
\author{Averil Aussedat\footnote{Dipartimento di Matematica, Universit\`a di Pisa, largo Pontecorvo 5, 56127 Pisa, Italy. \texttt{averil.aussedat@dm.unipi.it}.}}
\date{}
\begin{document}

\maketitle

\highlight{Abstract}{The geometric tangent cone to a probability measure $\mu$ is a set of measure-valued applications that are almost geodesics. This is a nonlocal condition, typically lost when conditioning the measure on a given set. We show that if one removes the barycenter of any element of the tangent cone, then the resulting set of centred measure fields is characterized by a local condition. Precisely, centred tangent fields must be concentrated on a family of vector subspaces attached to any point, and these subspaces correspond to the normal spaces to some sets of ``dimension $k$'' on which the measure $\mu$ is concentrated.}

\medskip

\noindent \textbf{Keywords:} Wasserstein spaces, tangent cone, DC functions.
\medskip

\noindent \textbf{MSC 2020:} 28A15, 51FXX, 35R06.

%%%%%%%%%%%%%%%%%%%%%%%%%%%%%%%%%%%%%%%%%%%%%%%%%%%%%%%%%%%%%%%%%%%%%%%%%%%%%%%%%%%
%%%%%%%%%%%%%%%%%%%%%%%%%%%%%%%%%%%%%%%%%%%%%%%%%%%%%%%%%%%%%%%%%%%%%%%%%%%%%%%%%%%
\section*{Introduction}
%%%%%%%%%%%%%%%%%%%%%%%%%%%%%%%%%%%%%%%%%%%%%%%%%%%%%%%%%%%%%%%%%%%%%%%%%%%%%%%%%%%
%%%%%%%%%%%%%%%%%%%%%%%%%%%%%%%%%%%%%%%%%%%%%%%%%%%%%%%%%%%%%%%%%%%%%%%%%%%%%%%%%%%

\renewcommand{\Sp}{\mathbb{R}^d}

In his inspired article \cite{lottTangentConesWasserstein2016}, Lott observed that whenever $\mu$ is the restriction of a Hausdorff measure to a $k-$dimensional $C^2$ manifold, the measures in the geometric tangent cone to $\mu$ have a particular structure. It was already known that the barycenter of such a measure must belong to the $L^2_{\mu}$ closure of gradients of smooth compactly supported functions, as proved in full generality in \cite{gigliInverseImplicationBrenierMcCann2011}. However, when removing the barycenter, the remaining \emph{centred part} happens to be concentrated on the normal directions to the manifold. The purpose of this work is to investigate the general case, using a new line of reasoning that allows us to remove any assumption on $\mu \in \Psp_2(\Sp)$.

\medskip 

Our main result is a combination of \Cref{res:decomp}, \Cref{res:charmuk} and \Cref{res:PeqD} below, gathered here in an single statement. A set $A \subset \mathbb{R}^d$ is \DC{k} if, up to a permutation of variables, it is the graph of a map from $\mathbb{R}^k$ to $\mathbb{R}^{d-k}$ with all coordinates being Differences of Convex (DC) functions. A set that can be covered by countably many \DC{k} sets is said to be \sDC{k}.
\begin{theorem*}
	Let $\mu \in \Psp_2(\mathbb{R}^d)$. There exists a unique decomposition $\mu = \sum_{k=0}^d m_k \mu^k$ in mutually singular measures such that $m_k \mu^k$ gives 0 mass to \DC{k-1} sets and is concentrated on a \sDC{k} set $A_k$. $\vphantom{\sum_{k=0}^d}$ 
	Moreover, the centred tangent cone $\Tan_{\mu}^0$ splits in $\sum_{k=0}^d m_k \Tan_{\mu^k}^0$, and a centred measure $\xi^k$ belongs to $\Tan_{\mu^k}^0$ if and only if it is concentrated on the $(d-k)-$dimensional normal spaces to $A_k$, $\vphantom{\sum_{k=0}^d}$which exist $\mu^k-$almost everywhere. 
\end{theorem*}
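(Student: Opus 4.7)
The informal statement packages three independent results: a decomposition of $\mu$ (\Cref{res:decomp}), a characterization of the centred tangent cone to each component $\mu^k$ (\Cref{res:charmuk}), and the identification of the abstract normal spaces appearing in that characterization with the geometric normals to $A_k$ (\Cref{res:PeqD}). I would prove these three separately, then assemble.

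For the decomposition I would argue by induction on $k$. Having defined $\mu^0,\dots,\mu^{k-1}$, set $\nu_k := \mu - \sum_{j<k} m_j \mu^j$ and let $s_k$ be the supremum of $\nu_k(A)$ over all $\sigma$-\DC{k} sets $A$. Since a countable union of $\sigma$-\DC{k} sets is again $\sigma$-\DC{k}, this supremum is attained on some maximal $A_k$, and I would set $m_k \mu^k := \nu_k \resmes A_k$. Concentration on $A_k$ is immediate; zero mass on $\sigma$-\DC{k-1} sets follows from having already peeled off all such mass at the previous steps. The induction terminates at $k=d$ since $\mathbb{R}^d$ itself is \DC{d}, and uniqueness is immediate from the maximality.

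For the decomposition of the tangent cone $\Tan_\mu^0 = \sum_{k=0}^d m_k\Tan_{\mu^k}^0$, the mutual singularity of the $\mu^k$ allows any optimal plan realizing a centred tangent element to be disintegrated above the sets $A_k$, each restriction remaining optimal between the corresponding subcomponents. The substantial step is then \Cref{res:charmuk}. The ``only if'' direction I would obtain by testing centred elements against gradients of smooth compactly supported functions and invoking a Rademacher-type result for DC maps: such gradients split $\mu^k$-almost everywhere into tangent and normal components along $A_k$, and a centred tangent element cannot carry any tangent component without contradicting the maximality of $A_k$. The ``if'' direction --- which I expect to be the main obstacle --- consists in building, from a barycenter-free measure concentrated on the normals, a sequence of small-scale perturbations of $\mu^k$ realizing it as a tangent; this requires quasi-optimal plans that move mass purely transversely to $A_k$ with no horizontal drift surviving in the rescaled limit. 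Finally \Cref{res:PeqD} ties the abstract normal spaces produced by \Cref{res:charmuk} to the geometric normals of $A_k$, presumably via blow-ups and Preiss' tangent-measure analysis at $\mu^k$-a.e. point.
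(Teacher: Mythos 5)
The greedy peeling construction of the decomposition (taking at each stage the maximal $\nu_k$-mass carried by \sDC{k} sets) is legitimate and essentially reproduces the max--min formula of \Cref{res:charmuk}, so existence and uniqueness of the $\mu^k$ is not where the difficulty lies; note, though, that the paper goes the other way, defining $\mu^k$ from the dimension of the Grassmannian section characterizing $\Sol_{\mu}^0$ and only afterwards proving the DC description. The first genuine gap is in the splitting $\Tan_{\mu}^0=\sum_k m_k\Tan_{\mu^k}^0$: your disintegration argument gives only one inclusion. Restricting (rescaled) optimal plans above $A_k$ shows that the restriction of a centred tangent field is tangent to $\mu^k$, but the converse direction --- that gluing arbitrary $\xi^k\in\Tan_{\mu^k}^0$ (say with the zero field on the other pieces) produces an element of $\Tan_{\mu}^0$ --- is the hard part, because optimality is a global cyclical-monotonicity condition coupling the pieces, and a modified limit of rescaled optimal velocities need not remain tangent. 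The paper cannot even prove this directly for $\Tan$: it proves restriction/extension for $\Sol_{\mu}^0$ via an extension lemma for optimal plans with compactly supported target (\Cref{res:extensionOpt}, \Cref{res:SolIfOrthCompSupp}) and then recovers the tangent statement by double orthogonality, which itself rests on the structure theory of \Cref{sec:cones} ($W_{\mu}$-closed horizontally convex cones of centred fields are exactly the centred fields concentrated on a Grassmannian section, \Cref{res:charSet0}). None of this machinery, nor a substitute for it, appears in your plan.

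The second gap is in the characterization of $\Tan_{\mu^k}^0$. Your ``only if'' step is vacuous as stated: for a centred field $\xi$ one has $\left<\xi,(id,\nabla\varphi)_{\#}\mu\right>_{\mu}=\left<\bary{\xi}{},\nabla\varphi\right>_{L^2_{\mu}}=0$ for \emph{every} smooth $\varphi$, so testing centred elements against gradients detects nothing --- this is precisely why the centred part requires a local characterization rather than the Gigli-type closure-of-gradients description. For the ``if'' direction you correctly identify the obstacle but propose no mechanism; the paper's engine, entirely absent from your sketch, is \Zaitchek's theorem used constructively in both directions. \Cref{res:particularphi} builds, for each \DC{k} piece of $A_k$, a convex function whose subdifferential admits $d-k+1$ uniformly separated measurable selections along that piece; their differences give explicit centred tangent fields splitting in $d-k$ independent directions, and \Cref{res:affineortho} shows these directions are orthogonal to the tangent plane of $A_k$ (whose $\mu^k$-a.e.\ existence is itself obtained by the DC calculus of \Cref{res:exiTansDCk}, not by blow-ups: Preiss tangent measures appear in the paper as a corollary, \Cref{res:PreissT}, never as a tool). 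Conversely, the Kantorovich-potential argument of \Cref{res:idenmuk:concentration} combined with \Zaitchek shows that splitting in more than $d-k$ directions would force $\mu^k$ to charge a \sDC{k-1} set, contradicting the defining property of $\mu^k$. Without this convex-analytic input, your ``quasi-optimal plans moving mass purely transversely to $A_k$'' have no construction behind them, and the claimed equality between the abstract splitting directions and the geometric normals to $A_k$ remains unproved.
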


For instance, in dimension $d=2$, let $\mu = \frac{1}{3} \mu^0 + \frac{1}{3} \mu^1 + \frac{1}{3} \mu^2$, where $\mu^0$ is atomic, $\mu^1$ nonatomic and supported on $[0,1] \times \{0\}$, and $\mu^2$ is absolutely continuous. Let $\xi$ be a probability measure on $\T\Sp \simeq \mathbb{R}^d \times \mathbb{R}^d$ with finite second moment, first marginal $\mu$ and barycenter 0 in each fiber. Since the $\mu^k$ are mutually singular, there is a unique way to write $\xi = \frac{1}{3} \sum_{k=0}^2 \xi^k$ with $\pi_{x \#} \xi^k = \mu^k$ for each $k$. The theorem then states that $\xi$ is tangent to $\mu$ if and only if $\xi^1$ is concentrated on pairs $(x,v)$ with $v$ orthogonal to $[0,1] \times \{0\}$, and $\xi^2$ is concentrated on pairs $(x,v)$ with $v$ orthogonal to $\mathbb{R}^2$, hence $\xi^2 = (id,0)_{\#} \mu^2$. 

\medskip

In dimension one, the decomposition $\mu = m_0 \mu^0 + m_1 \mu^1$ reduces to the atomic/diffuse decomposition. In this case, the statement was already proved in \cite{sedjroCharacterizationTangentSpace2014} (see also \cite{aussedatStructureGeometricTangent2025}): it says that a centred field $\xi = m_0 \xi^0 + m_1 \xi^1$ is tangent if and only if $\xi^1 = (id,0)_{\#} \mu^1$. This case is quite particular, since \sDC{0} sets are exactly countable sets, and the only \sDC{1} set is $\mathbb{R}$ itself. In the general case, one has to care about the regularity of the sets $A_k$ for the intermediate dimensions $0 < k < d$. 

\medskip

The relation between \DC{d-1} sets and optimal transport plans for the 2-Wasserstein distance appeared in the work of \ncite{gigliInverseImplicationBrenierMcCann2011}, which characterizes the measures $\mu$ satisfying the conclusion of the Brenier-McCann theorem \cite{brenierPolarFactorizationMonotone1991}. Precisely, a measure $\mu$ that gives 0 mass to any \DC{d-1} set (and consequently, to any \DC{k} set for $k \leqslant d-1$) does not admit optimal plans that \emph{split mass}, in the sense that for any other measure $\nu$ and optimal transport plan $\alpha$ between $\mu$ and $\nu$, there exists a map $f \in L^2_{\mu}(\Sp;\Sp)$ such that $\alpha = (id,f)_{\#} \mu$. The argument runs as follows: optimal plans are known to be concentrated on $c$-cyclically monotone sets, which in the case of $c(x,y) = |x-y|^2$, coincide with subdifferentials of convex functions. If an optimal plan splits mass at some point $x$, then the convex functions in question must admit several elements in their subdifferential at $x$, hence be non-differentiable. A remarkable theorem of \Zaitchek \cite{zajicekDifferentiationConvexFunctions1979} shows that the set of non-differentiability of a convex function can be covered by countably many \DC{d-1} sets, and that any \DC{d-1} set is contained in the set of non-differentiability of some convex function. Hence $\mu$ allows to split mass if and only if it charges a \DC{d-1} set.

\medskip 

This work generalizes Gigli's theorem by identifying which part of $\mu$ allows optimal plans to split mass in exactly $d-k$ directions, again using \Zaitchek's theorem. Moreover, and this is the main point, each part is concentrated on a set $A_k$ which admits normal spaces of dimension $d-k$ at $\mu-$almost any point, and the centred tangent measure fields are exactly the measure fields concentrated on these normal spaces. The intuition behind this decomposition is that tangent measures try to get away from $\mu$ as fast as possible, therefore choosing normal directions to the concentration sets. 

It may seem surprising that no regularity assumption on $A_k$ is needed: this is still a consequence of \Zaitchek's theorem, since quite miraculously, singularities of convex functions are described using (differences of) convex functions. Indeed, the regularity of $A_k$ is precisely the one that allows a convex function to be not differentiable in $d-k$ independent directions. By \Zaitchek, the singular subset of $A_k$ (at which the normal directions are not defined) has the ``same size'' as non-differentiability sets of convex functions \emph{in at least one more direction}. In the decomposition, these sets are seen by the lower-order measures $\mu^{j}$ for $j < k$, so that each $A_k$ has a tangent plane $\mu^k-$almost everywhere.

Since many properties of $\Tan_{\mu}^0$ only depend on the fact that this set is convex in a certain sense, we first focus on a generic horizontally convex closed cone of centred fields. However, the tangent cone is constructed in a canonical way for any $\mu$, and can be compared for different measures. As a side corollary, we show that the metric orthogonal $\Sol_{\mu}$ of $\Tan_{\mu}$ is closed with respect to the Wasserstein distance over $\T\Sp$.

\medskip

We mention that the statement is only concerned about centred fields. This is a natural assumption, since the opposite case of barycentric fields (induced by a map) cannot be described by a local condition. However, the following can be deduced for any measure field $\xi \in \Psp_2(\T\Sp)_{\mu}$. First, one can always write $\xi = (\pi_x, \pi_v + b(\pi_x))_{\#} \zeta$, where $b \in L^2_{\mu}$ is the barycenter map, and $\zeta$ is centred. It is known that $\xi \in \Tan_{\mu}$ if and only if both $(id,b)_{\#} \mu$ and $\zeta$ are tangent measure fields: hence the theorem applies to $\zeta$. Moreover, anticipating the notations of \Cref{sec:tansol}, we can further decompose $b = b_D + b_{D^{\perp}}$, where $b_D(x)$ is the projection of $b(x)$ on the vector space $D(x)$, and $b_{D^{\perp}} = b - b_D$. \Cref{res:charTanSol0} implies that the measure field $\frac{1}{2} \left[(id,-b_{D^{\perp}})_{\#} \mu + (id,b_{D^{\perp}})_{\#} \mu\right]$ is tangent, regardless of the fact that $b$ is or not. From this, one recovers that $(id,b_{D^{\perp}})_{\#} \mu$ is itself tangent, once again regardless of $b$ (see \cite[Proposition~5.4.4]{aussedatOptimalControlProblems2025}). Hence, the fact that $(id,b)_{\#} \mu$ is tangent only imposes (nonlocal) restrictions on the component $b_D$. This extends the observation of \cite{lottTangentConesWasserstein2016} that the barycenter of a tangent map is only constrained through its tangential component. 

\medskip

This article is organized as follows; \Cref{sec:cones} is devoted to closed convex cones of centred measure fields. \Cref{sec:tansol} applies the previous results to the centred geometric tangent cone $\Tan_{\mu}^0$ and its metric orthogonal $\Sol_{\mu}^0$, providing the candidate decomposition $\mu = \sum m_k \mu^k$. The fact that each $\mu^k$ is concentrated on a \sDC{k} set, and gives 0 mass to \DC{k-1} sets, is proved in \Cref{sec:concentration}. \Cref{sec:tangency} shows that the normal directions are well-defined $\mu^k-$almost everywhere, and characterize centred tangent measures. In addition, it is showed that the tangent measures in the sense of Preiss are supported on planes associated to $\Sol_{\mu}^0$ at $\mu-$almost every point. An \Cref{sec:appendix} collects some lengthy proofs. 

\renewcommand{\Sp}{\Omega}

\tableofcontents

%%%%%%%%%%%%%%%%%%%%%%%%%%%%%%%%%%%%%%%%%%%%%%%%%%%%%%%%%%%%%%%%%%%%%%%%%%%%%%%%%%%
%%%%%%%%%%%%%%%%%%%%%%%%%%%%%%%%%%%%%%%%%%%%%%%%%%%%%%%%%%%%%%%%%%%%%%%%%%%%%%%%%%%
\section{Preliminaries}
%%%%%%%%%%%%%%%%%%%%%%%%%%%%%%%%%%%%%%%%%%%%%%%%%%%%%%%%%%%%%%%%%%%%%%%%%%%%%%%%%%%
%%%%%%%%%%%%%%%%%%%%%%%%%%%%%%%%%%%%%%%%%%%%%%%%%%%%%%%%%%%%%%%%%%%%%%%%%%%%%%%%%%%

% Transport plans
% The Wasserstein distance
% The cone distance, metric scalar products
% horizontally convex sets

In the sequel, $\Sp = \mathbb{R}^d$ for $d \geqslant 1$. We keep the notation $\Sp$ to distinguish points $x \in \Sp$ from vectors $v \in \mathbb{R}^d$. 
Open balls of center $x$ and radius $r \geqslant 0$ are denoted $\mathscr{B}(x,r)$. The closure of a set $A \subset \Sp$ is denoted $\overline{A}$, and its complement $A^c$. 

\paragraph{Wasserstein spaces}

The set of Borel probability measures on a Polish space $X$ is denoted $\Psp(X)$. If $X = X_1 \times X_2$ is a product space and $\mu \in \Psp(X)$, we write $\mu = \mu(dx_1,dx_2)$ to give names to the variables of $X$, to be used in the canonical projections $\pi_{x_i} : (x_1,x_2) \to x_i$ for $i \in \{1,2\}$. The set $\Psp_2(X) \subset \Psp(X)$ collects the measures with finite second moment, i.e. such that $\int_{x \in X} d^2(x,o) \, d\mu < \infty$ for some $o \in X$.
 
A measurable application $f : X \to Y$ between two Polish spaces $X,Y$ induces an application $f_{\#} : \Psp(X) \to \Psp(Y)$ by $(f_{\#} \mu)(A) = \mu(f^{-1}(A))$ for any measurable $A \subset Y$. We refer to $f_{\#} \mu$ as the \emph{pushforward} of $\mu$ by $f$. 

Given $\mu \in \Psp_2(X)$ and $\nu \in \Psp_2(Y)$, the set of \emph{transport plans} between $\mu$ and $\nu$ is defined as
\begin{align*}
	\Gamma(\mu,\nu) \coloneqq \left\{ \alpha = \alpha(dx,dy) \in \Psp(X \times Y) \st \pi_{x \#} \alpha = \mu \text{ and } \pi_{y \#} \alpha = \nu \right\}.
\end{align*}
The $2-$Wasserstein distance $W(\mu,\nu)$ between $\mu$ and $\nu$ is given by 
\begin{align*}
	W^2(\mu,\nu) \coloneqq \inf_{\alpha \in \Gamma(\mu,\nu)} \int_{(x,y) \in X \times Y} d^2(x,y) d\alpha(x,y).
\end{align*}
We refer the reader to \cite{santambrogioOptimalTransportApplied2015} for an introduction to this distance and optimal transport, only mentioning that the infimum is reached on a set of \emph{optimal transport plans} denoted $\Gamma_o(\mu,\nu)$. 

\paragraph{Measure fields}

Let $\T\Sp \coloneqq \left\{ (x,v) \st x \in \Sp, \ v \in \T_x\Sp \right\}$ be the tangent bundle of $\Sp$, isometric to $\Sp \times \mathbb{R}^d$. When useful, we also denote $\T^n\Sp \coloneqq \left\{ (x,v_1,v_2,\cdots,v_n) \st x \in \Sp, \ v_i \in \T_x\Sp \text{ for } i \in \llbracket1,n\rrbracket \right\}$. 

For any measure $\mu \in \Psp_2(\Sp)$, denote $\Psp_2(\T\Sp)_{\mu}$ the set of Borel probability measures $\xi = \xi(dx,dv)$ on $\T\Sp$, with finite second moment, and satisfying the marginal condition $\pi_{x \#} \xi = \mu$. These elements can be seen as measure-valued applications in $L^2_{\mu}$, generalizing vector fields; for this reason, we refer to them as \emph{measure fields}. The measure fields of the form $\xi = (\pi_x, \pi_y - \pi_x)_{\#} \eta$ for $\eta \in \Gamma_o(\mu,\nu)$ and $\nu \in \Psp_2(\Sp)$ are called \emph{velocities of geodesics}. 
Following \cite{gigliGeometrySpaceProbability2008}, we introduce a metric structure on $\Psp_2(\T\Sp)_{\mu}$ that takes into account the common marginal $\mu$. Namely, given $\xi,\zeta \in \Psp_2(\T\Sp)_{\mu}$, define
\begin{align*}
	\Gamma_{\mu}(\xi,\zeta) \coloneqq \left\{ \alpha = \alpha(dx,dv,dw) \in \Psp_2(\T^2\Sp) \st (\pi_x,\pi_{v})_{\#} \alpha = \xi \text{ and } (\pi_x,\pi_w)_{\#} \alpha = \zeta \right\}.
\end{align*}
These plans only move mass between pairs $(x,v)$ and $(y,w)$ such that $x = y$. They define a distance $W_{\mu}$ by
\begin{align*}
	W_{\mu}^2(\xi,\zeta) \coloneqq \inf_{\alpha \in \Gamma_{\mu}(\xi,\zeta)} \int_{(x,v,w) \in \T^2\Sp} \left|v - w\right|^2 d\alpha(x,v,w).
\end{align*}
The distance to the zero measure field is shortened in $\|\xi\|_{\mu} = W_{\mu}(\xi,0_{\mu}) = \big(\int_{(x,v)} |v|^2 d\xi\big)^{1/2}$. The distance $W_{\mu}$ induces a \emph{metric scalar product} $\left<\cdot,\cdot\right>_{\mu} : (\Psp_2(\T\Sp)_{\mu})^2 \to \mathbb{R}$ by
\begin{align*}
	\left<\xi,\zeta\right>_{\mu} 
	\coloneqq \frac{1}{2} \left[\|\xi\|_{\mu}^2 + \|\zeta\|_{\mu}^2 - W_{\mu}^2(\xi,\zeta)\right]
	= \sup_{\alpha \in \Gamma_{\mu}(\xi,\zeta)} \int_{(x,v,w) \in \T^2\Sp} \left<v,w\right> d\alpha(x,v,w).
\end{align*}

\paragraph{Subsets of measure fields}

The \emph{barycenter} of a measure field $\xi$ is the unique element $b \in L^2_{\mu}(\Sp;\mathbb{R}^d)$ satisfying $\int \varphi(x,v) d\xi = \int \varphi(x,b(x)) d\mu$ for any quadratically growing $\varphi \in \mathcal{C}(\T\Sp;\mathbb{R})$ that is linear in~$v$. The measure fields with barycenter 0 are called \emph{centred}, and the set of centred measure fields is denoted $\Psp_2(\T\Sp)_{\mu}^0$. 

For $\lambda \in \mathbb{R}$ and $\xi \in \Psp_2(\T\Sp)_{\mu}$, define $\lambda \cdot \xi \coloneqq (\pi_x, \lambda \pi_v)_{\#} \xi$. We say that a subset $\Set_{\mu} \subset \Psp_2(\T\Sp)_{\mu}$ is 
\begin{itemize}
\item a (positive) cone if $\lambda \cdot \xi \in \Set_{\mu}$ whenever $\xi \in \Set_{\mu}$ and $\lambda \geqslant 0$;
\item (horizontally) convex if for any $\lambda \in [0,1]$, $\xi,\zeta \in \Set_{\mu}$ and $\alpha \in \Gamma_{\mu}(\xi,\zeta)$, the measure field given by $(\pi_x,(1-\lambda)\pi_v+\lambda\pi_w)_{\#} \alpha$ also belongs to $\Set_{\mu}$.
\end{itemize}
We often omit the adjective ``positive'' in the sequel.
Horizontal convexity is stronger than geodesic convexity, since any transport plan is allowed to produce interpolating curves. It should also be distinguished from convexity in the Banach sense of measures, which involves curves of the form $(1-\lambda) \xi + \lambda \zeta$. 

\paragraph{Grassmannian sections}

To reduce terminology, let us agree that a \emph{Grassmannian section} $D : \Sp \rightrightarrows \mathbb{R}^d$ is a measurable multivalued application such that $D(x)$ is a vector subspace, possibly reduced to $\{0\}$, for all $x \in \Sp$. Here measurability is understood in the classical sense, i.e. if for any open set $\mathcal{O} \subset \mathbb{R}^d$, the set $\left\{ x \in \Sp \st D(x) \cap \mathcal{O} \neq \emptyset \right\}$ is measurable. The \emph{graph} of $D$ is the set $\graph D \coloneqq \left\{ (x,v) \st v \in D(x) \right\}$, which is a measurable subset of $\T\Sp$ \cite[Corollary 2.2]{rockafellarMeasurableDependenceConvex1969}.
We record here the following lemma for later use. 

\begin{lem}\label{res:DWTSpClosed}
	Let $\mu \in \Psp_2(\Sp)$ and $D : \Sp \rightrightarrows \mathbb{R}^d$ be a Grassmannian section. 
	The set of $\xi \in \Psp_2(\T\Sp)_{\mu}$ such that $\xi(\graph D) = 1$ is closed with respect to the Wasserstein distance $W_{\T\Sp}$ on the tangent bundle.
\end{lem}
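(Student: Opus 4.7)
The plan is to exploit that convergence in $W_{\T\Sp}$ implies narrow convergence of measures on $\T\Sp$, and then approximate the Borel set $\graph D$ from inside by closed sets using Lusin's theorem. Concretely, take a sequence $(\xi_n) \subset \Psp_2(\T\Sp)_{\mu}$ with $\xi_n(\graph D) = 1$ for every $n$, converging to some $\xi \in \Psp_2(\T\Sp)_{\mu}$ in $W_{\T\Sp}$. Since $W_2$ on any Polish space metrizes a topology stronger than narrow convergence, the Portmanteau theorem yields $\xi(F) \geqslant \limsup_n \xi_n(F)$ for every closed $F \subset \T\Sp$.

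The main obstacle is that $\graph D$ is typically not closed, only Borel measurable, so Portmanteau cannot be applied directly. To reduce to the closed case, I would introduce the Borel map $P : \Sp \to \mathbb{R}^{d \times d}$ sending each $x$ to the matrix of the orthogonal projection onto $D(x)$, so that $\graph D = \left\{ (x,v) \in \T\Sp \st P(x) v = v \right\}$. Measurability of $P$ follows from that of $D$: splitting $\Sp$ into the measurable pieces $\left\{ x \in \Sp \st \dim D(x) = k \right\}$ for $k = 0, \dots, d$ and invoking a measurable selection theorem on each to pick a measurable orthonormal basis of $D(\cdot)$, one constructs $P$ as a Borel function into $\mathbb{R}^{d\times d}$. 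By Lusin's theorem applied to $P$, for every $\varepsilon > 0$ there exists a closed set $K_\varepsilon \subset \Sp$ with $\mu(\Sp \setminus K_\varepsilon) < \varepsilon$ on which $P$ is continuous, and consequently
\begin{equation*}
F_\varepsilon \coloneqq \left\{ (x,v) \in K_\varepsilon \times \mathbb{R}^d \st P(x) v = v \right\} \subset \graph D
\end{equation*}
is closed in $\T\Sp$.

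The conclusion then follows in a few lines. The marginal condition $\pi_{x \#} \xi_n = \mu$ gives
\begin{equation*}
\xi_n(F_\varepsilon) \geqslant \xi_n(\graph D) - \xi_n\bigl((\Sp \setminus K_\varepsilon) \times \mathbb{R}^d\bigr) = 1 - \mu(\Sp \setminus K_\varepsilon) \geqslant 1 - \varepsilon,
\end{equation*}
and applying the Portmanteau inequality to the closed set $F_\varepsilon$ upgrades this to $\xi(\graph D) \geqslant \xi(F_\varepsilon) \geqslant 1 - \varepsilon$. Letting $\varepsilon \to 0$ yields $\xi(\graph D) = 1$, as required. The only delicate point is the Borel measurability of the projection map $P$; everything else is a routine inner approximation via Lusin combined with Portmanteau.
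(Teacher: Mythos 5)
Your proof is correct and essentially the same as the paper's: both arguments inner-approximate $\graph D$ by a closed set of $\mu$-measure at least $1-\varepsilon$ (Lusin plus inner regularity of $\mu$), use the common marginal to see that each $\xi_n$ charges this closed set with mass at least $1-\varepsilon$, and conclude by the Portmanteau-type upper semicontinuity of masses of closed sets under $W_{\T\Sp}$ (narrow) convergence, finally letting $\varepsilon \to 0$. The only variation is the device encoding the regularity of $D$: the paper applies Lusin to measurable spanning fields coming from a Castaing representation, whereas you apply it to the measurable field of orthogonal projections $P(x)$ onto $D(x)$ (whose Borel measurability indeed follows from Gram--Schmidt applied to a Castaing representation, as the paper itself does elsewhere), and your closed set $\left\{ (x,v) \in K_\varepsilon \times \mathbb{R}^d \st P(x)v = v \right\}$, which is closed since $\|P\|\leqslant 1$ and $P$ is continuous on $K_\varepsilon$, serves the same purpose as the paper's $G_\varepsilon$.
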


\begin{proof}
	Using a Castaing representation of $D$ \cite[Theorem 3]{rockafellarMeasurableDependenceConvex1969}, one can find $d$ measurable functions $u_1,\cdots,u_d : \Sp \to \mathbb{R}^d$ such that $D(x) = \vspan \{u_1(x),\cdots,u_d(x)\}$ for any $x \in \Sp$. Let $\varepsilon > 0$. By Lusin's theorem \cite[\nopp 2.3.5]{federerGeometricMeasureTheory1996}, there exists a measurable set $B_\varepsilon \subset \Sp$ such that $\mu(B_\varepsilon) \geqslant 1-\varepsilon$ and each function $u_j$ coincides with a continuous function on $B_{\varepsilon}$. Since $\mu(B_\varepsilon) = \sup_{C_\varepsilon \subset B_\varepsilon, C_\varepsilon \text{ closed}} \mu(C_\varepsilon)$, we can find $C_\varepsilon \subset B_\varepsilon$ closed such that $\mu(C_\varepsilon) \geqslant 1-2\varepsilon$. The set 
	\begin{align*}
		G_{\varepsilon} 
		\coloneqq \bigcup_{x \in C_\varepsilon} \{x\} \times D(x)
		= \graph D \cap C_{\varepsilon} \times \mathbb{R}^d
		= \bigcup_{x \in C_\varepsilon} \{x\} \times \vspan \{u_1(x),\cdots,u_j(x)\}
	\end{align*}
	is measurable and closed, respectively by the first and second equality. Moreover, $\xi_n(G_{\varepsilon}) = \mu(C_\varepsilon) \geqslant 1-2 \varepsilon$ for all $n \in \mathbb{N}$. Since $\zeta \mapsto \int \ind_{G_{\varepsilon}} d\zeta$ is upper semicontinuous with respect to $W_{\T\Sp}$ \cite[Lemma 5.1.7]{ambrosioGradientFlows2005}, 
	\begin{align*}
		\xi(\graph D)
		\geqslant \xi(G_{\varepsilon}) 
		\geqslant \limsup_{n \to \infty} \xi_n(G_{\varepsilon}) 
		\geqslant 1 - 2 \varepsilon.
	\end{align*}
	Passing to the limit in $\varepsilon \searrow 0$, we conclude that $\xi(\graph D) = 1$. 
\end{proof}

%%%%%%%%%%%%%%%%%%%%%%%%%%%%%%%%%%%%%%%%%%%%%%%%%%%%%%%%%%%%%%%%%%%%%%%%%%%%%%%%%%%
%%%%%%%%%%%%%%%%%%%%%%%%%%%%%%%%%%%%%%%%%%%%%%%%%%%%%%%%%%%%%%%%%%%%%%%%%%%%%%%%%%%
\section{Closed convex cones of centred measure fields}\label{sec:cones}
%%%%%%%%%%%%%%%%%%%%%%%%%%%%%%%%%%%%%%%%%%%%%%%%%%%%%%%%%%%%%%%%%%%%%%%%%%%%%%%%%%%
%%%%%%%%%%%%%%%%%%%%%%%%%%%%%%%%%%%%%%%%%%%%%%%%%%%%%%%%%%%%%%%%%%%%%%%%%%%%%%%%%%%

\filler{In all this section, we consider a set $\Set_{\mu}^0 \subset \Psp_2(\T\Sp)_{\mu}^0$ with the following properties.}

\begin{hyp}\label{hyp:Set0}
	The set $\Set_{\mu}^0$ is a $W_{\mu}-$closed, horizontally convex cone of centred measure fields.
\end{hyp}

\filler{Here, and as below, ``cone'' means ``positive cone''. The prime motivation is the study of $\Tan_{\mu}^0$ and $\Sol_{\mu}^0$ to come in \Cref{sec:tansol}, which justifies our notation.}

\begin{prop}[Local characterization]\label{res:charSet0}
	A set $\Set_{\mu}^0$ satisfies \Cref{hyp:Set0} if and only if there exists a Grassmannian section $D : \Sp \rightrightarrows \mathbb{R}^d$ such that 
	\begin{align*}
		\xi \in \Set_{\mu}^0 
		\qquad \iff \qquad
		\left[\xi \in \Psp_2(\T\Sp)_{\mu}^0 \quad \text{and} \quad \xi(\graph D) = 1\right].
	\end{align*}
\end{prop}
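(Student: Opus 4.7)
The plan is to construct $D$ from a countable dense family in $\Set_{\mu}^0$ and to verify both implications of the characterization separately. The space $(\Psp_2(\T\Sp)_{\mu}, W_{\mu})$ is Polish, so $\Set_{\mu}^0$ is separable; fix a countable dense family $(\xi_n)_{n \in \mathbb{N}} \subset \Set_{\mu}^0$ and disintegrate each one as $\xi_n = \mu \otimes (\xi_n)_x$. Set $V_n(x) \coloneqq \vspan \supp (\xi_n)_x$ and
\[
  D(x) \coloneqq \vspan \bigcup_n V_n(x).
\]
Since $D(x) \subset \mathbb{R}^d$ is finite-dimensional, the nested subspaces $V_1(x) + \cdots + V_N(x)$ stabilize pointwise to $D(x)$ as $N \to \infty$. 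Measurability of $D$ as a Grassmannian section follows from a Castaing-type representation of the disintegrations, in the same spirit as the argument used in the proof of \Cref{res:DWTSpClosed}.

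For the forward implication ``$\Rightarrow$'', take $\xi \in \Set_{\mu}^0$ and use density to find $\xi_{n_k} \to \xi$ in $W_{\mu}$. Because any $\alpha \in \Gamma_{\mu}(\xi,\zeta)$ is in particular an element of $\Gamma(\xi,\zeta)$ whose base cost on $\Sp \times \Sp$ is zero, one has $W_{\T\Sp} \leqslant W_{\mu}$, so the convergence also takes place in $W_{\T\Sp}$. Each $\xi_{n_k}$ is concentrated on $\graph D$ by construction, hence \Cref{res:DWTSpClosed} yields $\xi(\graph D) = 1$, and $\xi$ is centred by hypothesis.

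The reverse implication ``$\Leftarrow$'' is more delicate. Given $\xi \in \Psp_2(\T\Sp)_{\mu}^0$ with $\xi(\graph D) = 1$, I approximate $\xi$ in $W_{\mu}$ by elements of $\Set_{\mu}^0$. Horizontal convexity with $\lambda = 1/2$ combined with the cone's closure under scaling by $2$ lets me form, from any $\xi_i, \xi_j \in \Set_{\mu}^0$, the fiberwise independent convolution $\xi_i \sqplus \xi_j \in \Set_{\mu}^0$. Iterating, $\eta^N \coloneqq \xi_1 \sqplus \cdots \sqplus \xi_N$ belongs to $\Set_{\mu}^0$ and its fibers have linear span equal to $V_1(x) + \cdots + V_N(x)$, which coincides with $D(x)$ for $N$ large enough depending on $x$. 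To recover the precise distribution $\xi_x$, I approximate $\xi$ fiberwise by measure fields $\xi^{(k)}$ whose fibers are centred atomic measures supported on $D(x)$, and realize each such atomic field as a horizontal convex combination of scalings of the $\eta^N$'s under tailored (generally non-independent) couplings, then conclude by the $W_{\mu}$-closure of $\Set_{\mu}^0$.

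The main obstacle is precisely this reshaping step of the reverse direction: converting the convolution-shaped fibers $(\eta^N)_x$ into arbitrary centred atomic fibers supported on $D(x)$. Independent couplings alone only produce convolutions, so I must exploit the full flexibility of horizontal convexity with arbitrary couplings in $\Gamma_{\mu}$. The delicate point is performing this reshaping measurably in $x$, producing genuine measure fields with enough uniform control to pass to the $W_{\mu}$-limit; measurable selection of the couplings together with a careful atomic discretization of $\xi$ should be the key technical ingredients.
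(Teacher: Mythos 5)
Your forward implication is fine: defining $D(x)$ as the span of the fiberwise supports of a countable $W_{\mu}$-dense family in $\Set_{\mu}^0$, noting $W_{\T\Sp} \leqslant W_{\mu}$, and invoking \Cref{res:DWTSpClosed} does give $\xi(\graph D)=1$ for every $\xi \in \Set_{\mu}^0$ (modulo the measurability of $x \mapsto \vspan \supp (\xi_n)_x$, which is a standard but not free verification). The problem is the reverse implication, and there the proposal has a genuine gap rather than a proof. The step you yourself flag as the ``main obstacle'' --- turning the convolution-shaped fibers of $\eta^N$ into an arbitrary centred field concentrated on $\graph D$ by ``tailored couplings'', measurably in $x$ and with enough control to pass to the $W_{\mu}$-limit --- is not a technical afterthought: it is exactly the mathematical content of the proposition. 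Nothing in the proposal shows that the closed horizontally convex cone generated by your $\eta^N$'s exhausts all centred fields on $\graph D$; note for instance that independent (product) couplings only ever produce convolutions, so symmetric fibers remain symmetric under them, and breaking this requires genuinely non-product couplings chosen fiber by fiber, together with a measurable selection and a discretization/limit argument that is never carried out. As stated, the claim that such a generation is possible is only known to be true \emph{because} of the proposition you are trying to prove, so the argument is circular at its core unless that reshaping lemma is proved independently.

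For comparison, the paper avoids any explicit generation construction by a duality argument: it establishes a projection theorem onto $W_{\mu}$-closed horizontally convex cones of centred fields (\Cref{res:projConvCone}), deduces the biorthogonality $\Set_{\mu}^0 = ((\Set_{\mu}^0)^{\perp})^{\perp}$ (\Cref{res:biorth}), shows that orthogonality to $(\Set_{\mu}^0)^{\perp}$ is tested on the symmetric two-point fields $\gamma_f$ alone (\Cref{res:Fsuff}, via an iteration of fiberwise self-averaging that converges to the barycenter map), and characterizes orthogonality to a $\gamma_f$ as the pointwise condition $\left<v,f^{\bdot}(x)\right>=0$ (\Cref{res:orthGamma}); the section $D$ is then the common kernel of a countable dense family of such $f$'s. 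The locality needed for all of this is supplied by \Cref{res:centredislocal}, which uses the centred hypothesis in an essential way. If you want to pursue your constructive route, you must prove the reshaping statement as a standalone lemma (already at a single fiber: from one centred measure whose support spans $D(x)$, every centred measure on $D(x)$ lies in the closed cone generated under horizontal interpolation with arbitrary couplings and scalings), and then make the construction measurable in $x$; until then the reverse implication is unproved.
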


As an example, in dimension $d = 2$, the set of centred $\xi$ satisfying $\left<v,(1,0)\right> = 0$ almost everywhere is $W_{\mu}-$closed, horizontally convex, stable by multiplication by any scalar, and characterized by the constant application $D \equiv \vect \{(0,1)\}$.

\begin{rem}[Negative cone]
	A direct consequence of \Cref{res:charSet0} is that $\Set_{\mu}^0$ is stable by multiplication by any scalar, including negative ones. Such a stability has already been noticed in the case of the tangent cone in \cite{gigliGeometrySpaceProbability2008}, using \emph{ad hoc} arguments; in this case, the property even holds for map-induced fields. Here it is mandatory to consider centred fields; indeed, already in dimension $d = 1$, the set of measure fields concentrated on $(x,v)$ with $v \geqslant 0$ provides a closed and horizontally convex cone that cannot be characterized by concentration over a Grassmannian section. 
\end{rem}

\begin{rem}[Vertical convexity]
	A somehow surprising implication of \Cref{res:charSet0} is that $\Set_{\mu}^0$ is convex as a subset of the Banach space of measures, in the sense that $(1-\lambda) \xi_0 + \lambda \xi_1 \in \Set_{\mu}^0$ whenever $\xi_0,\xi_1 \in \Set_{\mu}^0$ and $\lambda \in [0,1]$. Indeed, the superposition $(1-\lambda) \xi_0 + \lambda \xi_1$ stays centred, and concentrated on the graph of $D$. 
\end{rem}

\filler{
The aim of this section is to prove \Cref{res:charSet0}. Our strategy is to represent $\Set_{\mu}^0$ as the set of centred measure fields that are orthogonal to a family of ``simple'' measure fields, taken as in \Cref{def:gammaf} below. These fields will be used to construct the application $D$. The key observation, which justifies our interest for centred fields, is the following.}

\begin{lem}[Centred is local]\label{res:centredislocal}
	Let $\xi,\zeta$ be measure fields, with $\xi$ centred. Then $\left<\xi,\zeta\right>_{\mu} \geqslant 0$, and equality happens if and only if any disintegrations $(\xi_x)_x$, $(\zeta_x)_x$ satisfy $\left<\xi_x,\zeta_x\right>_{\delta_x} = 0$ for $\mu-$almost every $x$.
\end{lem}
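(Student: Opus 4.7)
The plan is to reduce both assertions to a fibrewise analysis, and then apply a trivial ``mean zero'' argument fibre-by-fibre. First I would disintegrate $\xi = \mu \otimes \xi_x$ and $\zeta = \mu \otimes \zeta_x$ along their common marginal $\mu$. For any $\alpha \in \Gamma_\mu(\xi,\zeta)$, the constraint $\pi_{x\#}\alpha = \mu$ allows a disintegration $\alpha = \int \alpha_x \, d\mu(x)$ where each $\alpha_x$ is supported in the fibre over $x$; the marginal conditions defining $\Gamma_\mu(\xi,\zeta)$ then translate precisely into $\alpha_x \in \Gamma(\xi_x, \zeta_x)$ for $\mu$-a.e. $x$, and conversely any measurable selection of such fibre plans recombines to an element of $\Gamma_\mu(\xi,\zeta)$.

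The next step, which I expect to be the only real technical point, is to establish the fibered identity
\begin{align*}
\langle \xi, \zeta \rangle_\mu \;=\; \int_{\Sp} \langle \xi_x, \zeta_x \rangle_{\delta_x} \, d\mu(x).
\end{align*}
The easiest route is to disintegrate $\|\xi\|_\mu^2$ and $\|\zeta\|_\mu^2$ (immediate from the definition) and then derive the analogous formula for $W_\mu^2$: the inequality $\leqslant$ is obvious from the bijection above, and the converse requires a measurable selection of optimal plans $x \mapsto \alpha_x^\star \in \Gamma_o(\xi_x, \zeta_x)$, which is standard via the Borel measurability of $(\mu_1,\mu_2) \mapsto W^2(\mu_1,\mu_2)$ and a Castaing-type argument applied to the multivalued map $x \rightrightarrows \Gamma_o(\xi_x, \zeta_x)$. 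Combining these disintegrations in the polarization identity $\langle \xi,\zeta\rangle_\mu = \frac{1}{2}(\|\xi\|_\mu^2 + \|\zeta\|_\mu^2 - W_\mu^2(\xi,\zeta))$ yields the displayed formula.

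Once this is in hand, both conclusions follow at once. The centredness hypothesis means that $\xi_x$ has barycenter $0$ for $\mu$-almost every $x$. Testing the sup in the definition of $\langle \xi_x, \zeta_x \rangle_{\delta_x}$ against the product coupling $\xi_x \otimes \zeta_x$ gives
\begin{align*}
\int \langle v, w \rangle \, d(\xi_x \otimes \zeta_x)(v,w) \;=\; \langle \bar{\xi}_x, \bar{\zeta}_x \rangle \;=\; 0,
\end{align*}
so $\langle \xi_x, \zeta_x \rangle_{\delta_x} \geqslant 0$ for $\mu$-a.e. $x$. Integrating gives $\langle \xi, \zeta \rangle_\mu \geqslant 0$, and since the integrand is non-negative, vanishing of the integral is equivalent to $\langle \xi_x, \zeta_x \rangle_{\delta_x} = 0$ for $\mu$-almost every $x$, which is exactly the characterization of equality.
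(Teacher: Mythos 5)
Your proposal is correct and follows essentially the same route as the paper: the fibered identity $\left<\xi,\zeta\right>_{\mu} = \int \left<\xi_x,\zeta_x\right>_{\delta_x} d\mu(x)$, followed by testing the fibrewise scalar product against the product coupling $\xi_x \otimes \zeta_x$, whose value is the product of barycenters and hence zero by centredness. The only difference is that the paper simply cites \cite[Proposition 4.2]{gigliGeometrySpaceProbability2008} for the fibered identity, whereas you sketch its (standard) proof via disintegration and a measurable selection of optimal fibre plans.
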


\begin{proof}
	Owing to \cite[Proposition 4.2]{gigliGeometrySpaceProbability2008}, the metric scalar product writes as
	\begin{align}\label{centredislocal:formula}
		\left<\xi,\zeta\right>_{\mu} = \int_{x \in \Sp} \left<\xi_x, \zeta_x\right>_{\delta_x} d\mu(x).
	\end{align}
	Fix $x \in \Sp$. Identifying $\xi_x,\zeta_x$ with measures on $\Psp_2(\T_x\Sp)_{\delta_x} \simeq \Psp_2(\mathbb{R}^d)$, we may consider the product plan $\alpha_x \coloneqq \xi_x \otimes \zeta_x$ in the definition of the metric scalar product to get
	\begin{align*}
		\left<\xi_x, \zeta_x\right>_{\delta_x}
		\geqslant \int_{v,w \in \mathbb{R}^d} \left<v,w\right> d\alpha_x
		= \int_{v \in \mathbb{R}^d} v d\xi_x(v) \int_{w \in \mathbb{R}^d} w d\zeta_x(w).
	\end{align*}
	As the first term of the right hand-side vanishes $\mu-$almost everywhere, the integrand in \Cref{centredislocal:formula} is nonnegative. Hence $\left<\xi,\zeta\right>_{\mu} \geqslant 0$, and $\left<\xi,\zeta\right>_{\mu} = 0$ implies $\left<\xi_x,\zeta_x\right>_{\delta_x} = 0$ for $\mu-$a.e. $x$. The converse is direct from \Cref{centredislocal:formula}.
\end{proof}

\filler{This observation is used as follows. In the sequel, one often has to prove that $\left<\xi,\zeta\right>_{\mu} = 0$ for some $\xi,\zeta$ lying in cones of interest. By specific arguments, one gets to a first inequality $\left<\xi,\zeta\right>_{\mu} \leqslant 0$. In the Hilbertian case, when working with two-sided cones, one would typically take $- 1 \cdot \zeta$ in place of $\zeta$, and conclude with the same inequality that $\left<\xi,\zeta\right>_{\mu} = 0$. However, we do not \emph{a priori} assume that $\Set_{\mu}^0$ is stable by multiplication with a negative scalar, and the missing inequality is provided by \Cref{res:centredislocal}.}

%%%%%%%%%%%%%%%%%%%%%%%%%%%%%%%%%%%%%%%%%%%%%%%%%%%%%%%%%%%%
\subsection{The metric orthogonal complement}
%%%%%%%%%%%%%%%%%%%%%%%%%%%%%%%%%%%%%%%%%%%%%%%%%%%%%%%%%%%%

Our aim here is to write $\Set_{\mu}^0$ as the orthogonal complement of its orthogonal complement. For convenience, we restrict to centred measure fields, since any measure field induced by a map is orthogonal (with respect to $\left<\cdot,\cdot\right>_{\mu}$) to any centred measure field. 

\begin{lem}\label{res:orthGamma}
	Let $\Set_{\mu}^0$ satisfy \Cref{hyp:Set0}. Then 
	\begin{align*}
		(\Set_{\mu}^0)^{\perp 0} \coloneqq \left\{ \zeta \in \Psp_2(\T\Sp)_{\mu}^0 \st \left<\xi,\zeta\right>_{\mu} = 0 \text{ for all } \xi \in \Set_{\mu}^0 \right\}
	\end{align*}
	also satisfies \Cref{hyp:Set0}, i.e. $(\Set_{\mu}^0)^{\perp 0}$ is a $W_{\mu}-$closed, horizontally convex cone of centred measure fields.
\end{lem}

\begin{proof}
	By construction, the scalar product is continuous with respect to $W_{\mu}$, so that $(\Set_{\mu}^0)^{\perp 0}$ is $W_{\mu}-$closed. Let $\zeta_0,\zeta_1 \in \Set_{\mu}^0$, $\alpha \in \Gamma_{\mu}(\zeta_0,\zeta_1)$ and $\lambda \in [0,1]$. The measure field $\zeta_{\lambda} \coloneqq (\pi_x,(1-\lambda) \pi_v + \lambda \pi_w)_{\#} \alpha$ is centred. To show that $\zeta_{\lambda} \in (\Set_{\mu}^0)^{\perp 0}$, let $\xi \in \Set_{\mu}^0$, and $\beta \in \Gamma_{\mu}(\zeta_{\lambda},\xi)$. We construct a plan $\omega = \omega(dx,dv_0,dv_1,dw) \in \Gamma_{\mu}(\zeta_0,\zeta_1,\xi)$ such that $(\pi_x,\pi_{v_0},\pi_{v_1})_{\#} \omega = \alpha$ and $(\pi_x,(1-\lambda) \pi_{v_0} + \lambda \pi_{v_1}, \pi_w)_{\#} \omega = \beta$ as follows: first change variables to consider $\widetilde{\alpha} \coloneqq (\pi_x,\pi_{v},(1-\lambda) \pi_{v} + \lambda \pi_w)_{\#} \alpha \in \Gamma_{\mu}(\zeta_0,\zeta_{\lambda})$, then glue $\widetilde{\alpha}$ and $\beta$ along their common marginal $\zeta_{\lambda}$ by \cite[Lemma 5.3.2]{ambrosioGradientFlows2005} to produce $\widetilde{\omega} = \widetilde{\omega}(dx,dv,dv_{\lambda},d w)$, and define $\omega \coloneqq (\pi_x,\pi_{v},\lambda^{-1}(\pi_{v_{\lambda}} - (1-\lambda) \pi_{v}), \pi_w)_{\#} \widetilde{\omega}$. Then 
	\begin{align*}
		\int_{(x,v,w)} \left<v,w\right> d\beta
		= \int_{(x,v_0,v_1,w)} \left<(1-\lambda) v_0 + \lambda v_1, w\right> d\omega
		\leqslant (1-\lambda) \left<\zeta_0,\xi\right>_{\mu} + \lambda \left<\zeta_1,\xi\right>_{\mu}
		= 0.
	\end{align*}
	Passing to the supremum over $\beta$, we get that $\left<\zeta_{\lambda}, \xi\right>_{\mu} \leqslant 0$. By \Cref{res:centredislocal}, the inequality $\left<\zeta_{\lambda}, \xi\right>_{\mu} \geqslant 0$ holds as both fields are centred. Hence $\zeta_{\lambda} \in (\Set_{\mu}^0)^{\perp 0}$.	
	Lastly, if $\zeta \in (\Set_{\mu}^0)^{\perp 0}$ and $\lambda \geqslant 0$, there holds $\left<\xi, \lambda \cdot \zeta\right>_{\mu} = \left<\lambda \cdot \xi, \zeta\right>_{\mu} = 0$ for all $\xi \in \Set_{\mu}^0$, so that $\lambda \cdot \xi \in (\Set_{\mu}^0)^{\perp 0}$. 
\end{proof}

\filler{The next result uses elementary tools from the pseudo-Hilbertian structure of $\Psp_2(\T\Sp)_{\mu}$, and is very close to similar statements in \cite{gigliGeometrySpaceProbability2008,aussedatStructureGeometricTangent2025}. For this reason, we delay the proof to the \Cref{sec:appendix}.}

\begin{prop}[Projection]\label{res:projConvCone}
	Let $C$ satisfy \Cref{hyp:Set0}. Then any $\xi \in \Psp_2(\T\Sp)_{\mu}^0$ admits a unique metric projection $\pi_C^{\mu} \xi \in C$, realizing $\inf_{\zeta \in C} W_{\mu}(\xi,\zeta)$. Moreover, for any $\alpha \in \Gamma_{\mu,o}(\xi,\pi_C^{\mu} \xi)$, the measure field $(\pi_x,\pi_v - \pi_w)_{\#} \alpha$ is the metric projection of $\xi$ on $C^{\perp} \coloneqq \left\{ \zeta \in \Psp_2(\T\Sp)_{\mu} \st \left<\zeta,\gamma\right>_{\mu} = 0 \ \ \forall \gamma \in C \right\}$, and there holds
	\begin{align}\label{projConvCone:statement}
		\left<\xi, \zeta\right>_{\mu} \leqslant \left<\pi_C^{\mu} \xi, \zeta\right>_{\mu}
		\qquad \forall \zeta \in C,
	\end{align}
	with equality if $\zeta = - 1 \cdot \zeta = (\pi_x, - \pi_v)_{\#} \zeta$. 
\end{prop}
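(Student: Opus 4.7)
The plan is to adapt the classical Hilbert-space projection onto a closed convex cone to the pseudo-Hilbertian setting $(W_\mu, \langle\cdot,\cdot\rangle_\mu)$, substituting horizontal interpolations along plans in $\Gamma_\mu$ for affine combinations and using \Cref{res:centredislocal} as a surrogate for bilinearity. For existence, I would let $d := \inf_{\zeta \in C} W_\mu(\xi, \zeta)$, pick a minimizing sequence $(\zeta_n) \subset C$, and glue optimal plans $\beta_m \in \Gamma_{\mu,o}(\xi, \zeta_m)$ and $\beta_n \in \Gamma_{\mu,o}(\xi, \zeta_n)$ along their common $\xi$-marginal via \cite[Lemma 5.3.2]{ambrosioGradientFlows2005} to produce a four-marginal plan $\omega_{mn}$. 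Horizontal convexity places the midpoint $(\pi_x, \tfrac{1}{2}(\pi_{v_m} + \pi_{v_n}))_\# \omega_{mn}$ in $C$, and the parallelogram estimate $\tfrac{1}{4} \int |v_m - v_n|^2 \, d\omega_{mn} \leq \tfrac{1}{2}[W_\mu^2(\xi, \zeta_m) + W_\mu^2(\xi, \zeta_n)] - d^2$ shows $(\zeta_n)$ is $W_\mu$-Cauchy; the limit $\pi_C^\mu\xi \in C$ exists by closure, and the same estimate applied to two minimizers gives uniqueness.

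For the variational inequality, fix $\zeta \in C$ and arbitrary $\gamma \in \Gamma_\mu(\xi, \zeta)$. Glueing $\gamma$ with an optimal $\beta \in \Gamma_{\mu,o}(\xi, \pi_C^\mu\xi)$ along $\xi$ produces $\omega(dx, dv_\xi, dv_C, dv_\zeta)$ with $\alpha^* := (\pi_x, \pi_{v_C}, \pi_{v_\zeta})_\# \omega \in \Gamma_\mu(\pi_C^\mu\xi, \zeta)$. Horizontal convexity places the interpolation $\zeta_t := (\pi_x, (1-t)\pi_{v_C} + t\pi_{v_\zeta})_\# \alpha^*$ in $C$, and using the lifted plan to estimate $W_\mu^2(\xi, \zeta_t)$ from above, minimality of $\pi_C^\mu\xi$ and $t \to 0^+$ yield the first-order condition $\int \langle v_\xi - v_C, v_C - v_\zeta\rangle \, d\omega \geq 0$. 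Specializing to $\zeta = \lambda \cdot \pi_C^\mu\xi$ (allowed by the cone property) and letting $\lambda \to 1^\pm$ produces the key identity $\langle \xi, \pi_C^\mu\xi\rangle_\mu = \|\pi_C^\mu\xi\|_\mu^2$; substituting it in the first-order condition and using optimality of $\beta$ (so that $\int \langle v_\xi, v_C\rangle \, d\omega = \langle \xi, \pi_C^\mu\xi\rangle_\mu$) leads to $\int \langle v_\xi, v_\zeta\rangle \, d\gamma \leq \int \langle v_C, v_\zeta\rangle \, d\alpha^* \leq \langle \pi_C^\mu\xi, \zeta\rangle_\mu$. Taking the supremum over $\gamma$ proves \eqref{projConvCone:statement}.

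For the orthogonal decomposition, set $\eta := (\pi_x, \pi_v - \pi_w)_\# \alpha$, which is centred by cancellation of the fiberwise means. Given $\zeta \in C$ and $\delta \in \Gamma_\mu(\eta, \zeta)$, I would lift $\delta$ through $\alpha$ by disintegrating $\alpha_x$ along the level sets of $(v, w) \mapsto v - w$ and coupling with $\delta_x(\cdot \mid u)$ to obtain $\omega(dx, dv, dw, dz)$ with $(\pi_x, \pi_v, \pi_w)_\# \omega = \alpha$ and $(\pi_x, \pi_v - \pi_w, \pi_z)_\# \omega = \delta$. Rewriting $\int \langle v - w, z\rangle \, d\omega$ and combining with the cone identity and the variational inequality shows this pairing is $\leq 0$; \Cref{res:centredislocal} supplies the reverse, forcing $\langle \eta, \zeta\rangle_\mu = 0$. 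Since $C^\perp$ is itself a $W_\mu$-closed horizontally convex cone of centred fields (\Cref{res:orthGamma}), the symmetric argument yields $\eta = \pi_{C^\perp}^\mu\xi$ by uniqueness. For the equality case $\zeta = -1 \cdot \zeta$, both $\zeta$ and $-\zeta = \zeta$ lie in $C$, so $\langle \eta, \zeta\rangle_\mu = \langle \eta, -\zeta\rangle_\mu = 0$; \Cref{res:centredislocal} combined with the fiberwise symmetry of $\zeta_x$ then forces every coupling of $(\eta_x, \zeta_x)$ to pair to zero $\mu$-a.e., which used in the glueing above yields a coupling of $(\xi, \zeta)$ attaining $\langle \pi_C^\mu\xi, \zeta\rangle_\mu$, giving the equality via \eqref{projConvCone:statement}.

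The main obstacle is the orthogonal decomposition: in a Hilbert space, the identity $\langle \xi - \pi_C\xi, \zeta\rangle = \langle \xi, \zeta\rangle - \langle \pi_C\xi, \zeta\rangle$ is automatic by bilinearity, but here the nonlinearity of the pseudo-Hilbertian product (notably $\langle \xi, -\zeta\rangle_\mu \neq -\langle \xi, \zeta\rangle_\mu$ in general) forces a careful fiberwise lifting of arbitrary couplings of $(\eta, \zeta)$ back through the optimal plan $\alpha$, and the passage from the resulting pointwise estimates to global orthogonality rests crucially on \Cref{res:centredislocal}.
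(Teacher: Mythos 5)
Your overall route is the same as the paper's: existence and uniqueness via a minimizing sequence and the parallelogram estimate along glued plans (the paper simply cites Gigli's Proposition 4.30 for this), then the first-order condition $\int \langle v_\xi - v_C, v_C - v_\zeta\rangle \, d\omega \geqslant 0$ obtained from horizontal convexity and minimality, the cone identity $\left<\xi,\pi_C^{\mu}\xi\right>_{\mu} = \|\pi_C^{\mu}\xi\|_{\mu}^2$ from testing with $\lambda\cdot\pi_C^{\mu}\xi$, and \Cref{res:centredislocal} to upgrade one-sided estimates to orthogonality. Your proofs of existence/uniqueness, of \Cref{projConvCone:statement}, of $\eta \coloneqq (\pi_x,\pi_v-\pi_w)_{\#}\alpha \in C^{\perp}$, and of the equality case for symmetric $\zeta$ are all sound (the equality case via ``every fiberwise coupling pairs to zero'' is a small variant of the paper's argument and works).

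The genuine gap is the claim that $\eta$ \emph{is the metric projection} of $\xi$ on $C^{\perp}$. You establish only $\eta \in C^{\perp}$ and then invoke ``the symmetric argument \dots by uniqueness''; but uniqueness of the projection onto $C^{\perp}$ identifies $\eta$ with it only after you have shown that $\eta$ minimizes $W_{\mu}(\xi,\cdot)$ over $C^{\perp}$, which membership alone does not give. There is also no usable symmetry here: running the whole machinery with $C^{\perp}$ in place of $C$ would only tell you that the residual of that projection lies in $(C^{\perp})^{\perp}$, and identifying $(C^{\perp})^{\perp}$ with $C$ is exactly \Cref{res:biorth}, which the paper proves \emph{from} \Cref{res:projConvCone} — so that route is circular. (A secondary point: $C^{\perp}$ as defined in the statement is not a cone of centred fields, so even applying your existence step to it requires noting that centredness is never used there, or restricting to its centred part.) The missing step is the Pythagoras-type estimate: for $\zeta \in C^{\perp}$, lift a near-optimal plan of $\Gamma_{\mu}(\xi,\zeta)$ through $\alpha$, write $u - w = v + (u - v - w)$ with $u \leftrightarrow \xi$, $v \leftrightarrow \pi_C^{\mu}\xi$, $w \leftrightarrow \zeta$, and expand; the cross terms reduce, using the cone identity $\int\left<v,u-v\right> d\alpha = 0$ and $\left<\pi_C^{\mu}\xi,\zeta\right>_{\mu} = 0$, to give $W_{\mu}^2(\xi,\zeta) \geqslant W_{\mu}^2(\xi,\eta) + W_{\mu}^2(\eta,\zeta)$, whence $\eta$ realizes $\inf_{\zeta \in C^{\perp}} W_{\mu}(\xi,\zeta)$. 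This computation is in the same spirit as what you already did, so the gap is fillable, but as written that part of the statement is not proved.
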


\begin{rem}
	The conclusions of \Cref{res:projConvCone} are not sharp; as a consequence of \Cref{res:charSet0}, equality will hold in \Cref{projConvCone:statement} for any $\zeta \in C$. One could additionally prove that $\Gamma_{\mu,o}(\xi,\pi_C^{\mu} \xi)$ reduces to a singleton that is induced by a map in a certain sense, following the reasoning of \cite[Theorem 4.33]{gigliGeometrySpaceProbability2008}. 
\end{rem}

\begin{lem}\label{res:biorth}
	There holds $\Set_{\mu}^0 = ((\Set_{\mu}^0)^{\perp 0})^{\perp 0}$.
\end{lem}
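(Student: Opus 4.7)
The inclusion $\Set_{\mu}^0 \subseteq ((\Set_{\mu}^0)^{\perp})^{\perp}$ is immediate from the symmetry of the vanishing condition $\left<\xi,\zeta\right>_{\mu} = 0$ together with the fact that $\Set_{\mu}^0 \subset \Psp_2(\T\Sp)_{\mu}^0$. For the reverse inclusion, I would fix an arbitrary $\xi \in ((\Set_{\mu}^0)^{\perp})^{\perp}$, let $\pi \coloneqq \pi_{\Set_{\mu}^0}^{\mu} \xi$ be the metric projection provided by \Cref{res:projConvCone}, and aim to show $W_{\mu}(\xi,\pi) = 0$, which forces $\xi = \pi \in \Set_{\mu}^0$. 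Picking any $\alpha \in \Gamma_{\mu,o}(\xi,\pi)$, the residual field $\zeta \coloneqq (\pi_x,\pi_v - \pi_w)_{\#} \alpha$ has fiber-wise barycenter equal to the difference of those of $\xi$ and $\pi$, both of which vanish, so $\zeta$ is centred and therefore belongs to $(\Set_{\mu}^0)^{\perp}$.

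The key step is to upgrade the one-sided bound \Cref{projConvCone:statement} into a Pythagorean identity $\|\xi\|_{\mu}^2 = \|\pi\|_{\mu}^2 + \|\zeta\|_{\mu}^2$, exploiting the cone structure of $\Set_{\mu}^0$. For every $\lambda \geqslant 0$, the rescaled field $\lambda \cdot \pi$ still lies in $\Set_{\mu}^0$, so the projection inequality gives $W_{\mu}^2(\xi,\pi) \leqslant W_{\mu}^2(\xi,\lambda \cdot \pi)$, while the plan $(\pi_x,\pi_v,\lambda\pi_w)_{\#}\alpha$ provides the upper bound $W_{\mu}^2(\xi,\lambda\cdot\pi) \leqslant f(\lambda) \coloneqq \|\xi\|_{\mu}^2 - 2\lambda \int \left<v,w\right> d\alpha + \lambda^2 \|\pi\|_{\mu}^2$. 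Since $f(1) = W_{\mu}^2(\xi,\pi)$ by optimality of $\alpha$, the quadratic $f$ is minimised at the interior point $\lambda = 1 \in [0,\infty)$; stationarity then forces $\int \left<v,w\right> d\alpha = \|\pi\|_{\mu}^2$, from which the Pythagorean identity follows after expanding $\|\zeta\|_{\mu}^2 = W_{\mu}^2(\xi,\pi) = f(1)$.

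To conclude, I would test the pairing $\left<\xi,\zeta\right>_{\mu} = 0$ (which holds by the hypothesis on $\xi$ and the membership $\zeta \in (\Set_{\mu}^0)^{\perp}$) against the admissible plan $(\pi_x,\pi_v,\pi_v-\pi_w)_{\#}\alpha \in \Gamma_{\mu}(\xi,\zeta)$. This yields $\int \left<v,v-w\right> d\alpha \leqslant \left<\xi,\zeta\right>_{\mu} = 0$, whereas a direct expansion using the Pythagorean identity shows $\int \left<v,v-w\right> d\alpha = \|\xi\|_{\mu}^2 - \|\pi\|_{\mu}^2 = \|\zeta\|_{\mu}^2$. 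Thus $\|\zeta\|_{\mu} = 0$, as desired. The main technical obstacle will be the Pythagorean upgrade, where one has to extract a two-sided equality from the one-sided cone stability of $\Set_{\mu}^0$; aside from that, care is needed to confirm that $\zeta$ lands in $(\Set_{\mu}^0)^{\perp}$ rather than only in the larger orthogonal appearing in \Cref{res:projConvCone}.
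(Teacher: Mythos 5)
Your proof is correct and follows essentially the same route as the paper: project $\xi$ onto $\Set_{\mu}^0$ via \Cref{res:projConvCone}, note that the residual field $\zeta$ obtained from an optimal plan is centred and orthogonal to $\Set_{\mu}^0$ (hence in $(\Set_{\mu}^0)^{\perp}$), and use the hypothesis $\xi \in ((\Set_{\mu}^0)^{\perp})^{\perp}$ to force $\zeta = 0_{\mu}$, so $\xi$ equals its projection. The only difference is in the finish and is minor: where the paper uses that $\zeta$ is the metric projection of $\xi$ on the orthogonal together with $W_{\mu}^2(\xi,\zeta) = \|\xi\|_{\mu}^2 + \|\zeta\|_{\mu}^2$, you re-derive the first-variation identity $\int \left<v,w\right> d\alpha = \|\pi\|_{\mu}^2$ (already present inside the appendix proof of \Cref{res:projConvCone}) and conclude by testing $\left<\xi,\zeta\right>_{\mu} = 0$ against the explicit plan $(\pi_x,\pi_v,\pi_v-\pi_w)_{\#}\alpha$, which is equally valid.
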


\begin{proof}
	The inclusion $\Set_{\mu}^0 \subset ((\Set_{\mu}^0)^{\perp 0})^{\perp 0}$ holds by definition. Conversely, let $\xi \in ((\Set_{\mu}^0)^{\perp 0})^{\perp 0}$, and denote by $\pi^{\mu} \xi$ its metric projection on $\Set_{\mu}^0$. By \Cref{res:projConvCone}, for any $\alpha = \alpha(dx,dv,dw) \in \Gamma_{\mu,o}(\xi,\pi^{\mu} \xi)$, the measure field $(\pi_x,\pi_v - \pi_w)_{\#} \alpha$ is the projection of $\xi$ on $(\Set_{\mu}^0)^{\perp 0}$. However, this projection is $0_{\mu}$, since $W_{\mu}^2(\xi,\zeta) = \|\xi\|_{\mu}^2 + \|\zeta\|_{\mu}^2$ for any $\zeta \in (\Set_{\mu}^0)^{\perp 0}$. Hence $v = w$ for $\alpha-$a.e. $(x,v,w)$, and $\xi = \pi^{\mu} \xi \in \Set_{\mu}^0$. 
\end{proof}

%%%%%%%%%%%%%%%%%%%%%%%%%%%%%%%%%%%%%%%%%%%%%%%%%%%%%%%%%%%%
\subsection{Reduction to symmetric measure fields}
%%%%%%%%%%%%%%%%%%%%%%%%%%%%%%%%%%%%%%%%%%%%%%%%%%%%%%%%%%%%

It will be useful to introduce the following notation: to any $f \in L^2_{\mu}(\Sp;\mathbb{R}^d)$, associate the measure field
\begin{align}\label{def:gammaf}
	\gamma_{f} \coloneqq \frac{1}{2} \left[(id,-f)_{\#} \mu + (id,f)_{\#} \mu\right] \in \Psp_2(\T\Sp)_{\mu}^0.
\end{align}
Note that $(\pi_x,-\pi_v)_{\#} \gamma_f = \gamma_f$. For any $f,g \in L^2_{\mu}(\Sp;\mathbb{R}^d)$, the transport plan $\frac{1}{2}\left[(id,f,g)_{\#} \mu + (id,-f,-g)_{\#} \mu\right]$ provides the estimate $W_{\mu}(\gamma_f,\gamma_g) \leqslant \|f - g\|_{L^2_{\mu}}$. 
\filler{Our interest for such fields stems from the following lemma.}

\begin{lem}\label{res:orthGamma}
	Let  $f \in L^2_{\mu}(\Sp;\mathbb{R}^d)$. A centred measure field $\xi$ is orthogonal to $\gamma_{f}$ if and only if for some (thus any) measurable $f^{\bdot} : \Sp \to \mathbb{R}^d$ in the equivalence class $f$, there holds $\left<v, f^{\bdot}(x)\right> = 0$ for $\xi-$almost every $(x,v)$.
\end{lem}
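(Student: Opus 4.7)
The plan is to dualize both implications through Lemma \ref{res:centredislocal}, exploiting that $\gamma_f$ is itself centred: this makes $\left<\xi, \gamma_f\right>_{\mu} \geqslant 0$ automatic and reduces the statement to the pointwise equality $\left<\xi_x, (\gamma_f)_x\right>_{\delta_x} = 0$ for $\mu$-almost every $x$. The whole argument will then be pointwise in $x$, and benefit from the very simple disintegration $(\gamma_f)_x = \frac{1}{2}\bigl(\delta_{f^{\bdot}(x)} + \delta_{-f^{\bdot}(x)}\bigr)$ being supported on exactly two points.

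For the first direction, I will assume $\left< v, f^{\bdot}(x)\right> = 0$ for $\xi$-almost every $(x,v)$ and show $\left<\xi, \gamma_f\right>_{\mu} \leqslant 0$. Given any $\alpha \in \Gamma_{\mu}(\xi, \gamma_f)$, the fiberwise disintegration $\alpha_x$ is a plan in $\Gamma(\xi_x, (\gamma_f)_x)$ for $\mu$-a.e. $x$, hence concentrated on $\mathbb{R}^d \times \{\pm f^{\bdot}(x)\}$ by its prescribed second marginal. On this support the integrand $\left< v, w \right>$ coincides with $\pm\left< v, f^{\bdot}(x) \right>$, which vanishes $\xi_x$-a.e. by assumption; integrating and taking the supremum over $\alpha$ gives the desired inequality, and Lemma \ref{res:centredislocal} supplies the reverse.

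For the converse, I will start from $\left<\xi, \gamma_f\right>_{\mu} = 0$, which via Lemma \ref{res:centredislocal} forces $\left<\xi_x, (\gamma_f)_x\right>_{\delta_x} = 0$ for $\mu$-a.e. $x$. Fixing such an $x$, I will parametrize plans in $\Gamma(\xi_x, (\gamma_f)_x)$ by Borel functions $\phi : \mathbb{R}^d \to [0,1]$ through
\begin{align*}
\alpha_{\phi}(dv, dw) = \xi_x(dv) \otimes \bigl(\phi(v) \delta_{f^{\bdot}(x)} + (1 - \phi(v)) \delta_{-f^{\bdot}(x)}\bigr),
\end{align*}
the marginal constraint reducing to $\int \phi \, d\xi_x = 1/2$ and the associated cost being $\int (2\phi(v) - 1) \left< v, f^{\bdot}(x)\right> d\xi_x(v)$. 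The constant choice $\phi \equiv 1/2$ is feasible and realizes the value $0$, which must therefore be the supremum. Linearizing around this choice by $\phi = 1/2 + \varepsilon \psi$ for any bounded Borel $\psi$ with $\int \psi \, d\xi_x = 0$ and $\varepsilon > 0$ small enough to preserve admissibility yields $2 \varepsilon \int \psi(v) \left< v, f^{\bdot}(x) \right> d\xi_x \leqslant 0$; swapping $\psi$ with $-\psi$ forces equality for every such $\psi$. Since the constants are the only elements of $L^{\infty}(\xi_x)$ orthogonal to all mean-zero test functions, $v \mapsto \left<v, f^{\bdot}(x)\right>$ equals its $\xi_x$-mean almost surely, and centeredness of $\xi_x$ (inherited from $\xi$) identifies this mean as $0$. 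A final integration in $x$ transfers the conclusion back to $\xi$.

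The only delicate point will be justifying that the parametrization above really exhausts $\Gamma(\xi_x, (\gamma_f)_x)$ (a routine disintegration in $v$ using that the second marginal has two atoms), and checking that $\phi \equiv 1/2 + \varepsilon \psi$ stays in $[0,1]$ when $\psi$ is bounded. Beyond this bookkeeping, the converse reduces to linearizing a concave maximum at an interior point of the feasible set, and it is precisely the centredness of $\gamma_f$ that makes this interior point available, clarifying once more the role of the standing centredness assumption.
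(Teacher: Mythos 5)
Your proof is correct, and it follows the paper's skeleton: both directions are funnelled through \Cref{res:centredislocal}, reducing orthogonality to the vanishing of the fiberwise scalar products $\left<\xi_x,(\gamma_f)_x\right>_{\delta_x}$, and both exploit that $(\gamma_f)_x$ consists of the two atoms $\pm f^{\bdot}(x)$. Where you diverge is in the fiberwise converse: the paper argues by contradiction, exhibiting a single explicit plan that ships extra mass from $\{\left<v,\overline{w}\right> > \varepsilon_+\}$ to $\overline{w}$ and from $\{\left<v,\overline{w}\right> < -\varepsilon_-\}$ to $-\overline{w}$, producing a strictly positive cost; you instead parametrize \emph{all} plans by a selection density $\phi$, note that the even split $\phi \equiv 1/2$ is an interior maximizer of a linear functional whose supremum is $0$, and conclude from the first-order condition together with the duality fact that a function whose integral against every bounded mean-zero test function vanishes is $\xi_x$-a.e.\ constant, the constant being $0$ by centredness of $\xi_x$. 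The two arguments are equivalent in substance (a perturbation $\psi$ built from normalized indicators of the sets where $\left<v,f^{\bdot}(x)\right>$ is positive or negative essentially reproduces the paper's competitor plan), but yours is somewhat more systematic and avoids the $\varepsilon_{\pm}$ bookkeeping. Two cosmetic points: the function $v \mapsto \left<v,f^{\bdot}(x)\right>$ lies in $L^1(\xi_x)$ (indeed $L^2$), not $L^{\infty}(\xi_x)$, but the duality step only needs an $L^1$ function tested against bounded mean-zero $\psi$ (take $\psi = \ind_A - \xi_x(A)$ for Borel $A$), so nothing breaks; and when $f^{\bdot}(x) = 0$ the marginal constraint $\int \phi \, d\xi_x = 1/2$ degenerates because the two atoms coincide, but at such $x$ the desired conclusion is trivial, so this case needs no argument.
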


\begin{proof}
	If $\left<v,f^{\bdot}(x)\right> = 0$ for $\xi-$almost all $(x,v)$, then any transport plan between $\xi$ and $\gamma$ is concentrated on $(x,v,w)$ with $w = \pm f^{\bdot}(x)$, and there must hold $\left<v,w\right> = 0$ almost everywhere. This shows that $\left<\xi,\gamma\right>_{\mu} = 0$. Conversely, assume that $\xi$ is centred and $\left<\xi,\gamma\right>_{\mu} = 0$. Let $(\xi_x)_{x \in \Sp}$ be a disintegration of $\xi$, that can be chosen such that $\int_v v d\xi_x = 0$ for all $x \in \Sp$. By \Cref{res:centredislocal}, there holds for $\mu-$almost every $x$ that
	\begin{align}\label{orthGamma:disint}
		\int_{(v,w) \in (\mathbb{R}^d)^2} \left<v,w\right> d\alpha_x = 0
		\qquad \text{for all } \alpha_x \in \Gamma\left(\xi_x,\frac{1}{2}\left[\delta_{f^{\bdot}(x)} + \delta_{(-f^{\bdot}(x))}\right]\right).
	\end{align}
	If we show that \Cref{orthGamma:disint} implies $\left<v,f^{\bdot}(x)\right> = 0$ for $\xi_x-$almost every $v$, then the equality $\int |\left<v,f^{\bdot}(x)\right>| d\xi = \int_{x} \int_{v} |\left<v,f^{\bdot}(x)\right>| d\xi_x(v) d\mu(x)$ will ensure that $\left<v,f^{\bdot}(x)\right> = 0$ for $\xi-$almost every $(x,v)$. In the rest of the proof, we simplify the notation by letting $\zeta \coloneqq \xi_x \in \Psp_2(\mathbb{R}^d)^0$ and $\overline{w} \coloneqq f^{\bdot}(x) \in \mathbb{R}^d$. 
	
	Assume by contradiction that there exists $\varepsilon_+ > 0$ such that $m_+ \coloneqq \zeta\left\{ \left<v,\overline{w}\right> > \varepsilon_+ \right\} > 0$. Then, since $\int_{v} \left<v,\overline{w}\right> d\zeta = 0$, there must exist $\varepsilon_- > 0$ such that $m_- \coloneqq \zeta\left\{ \left<v,\overline{w}\right> < - \varepsilon_- \right\} > 0$. Let $m \coloneqq \min(m_-,m_+) > 0$. Construct a transport plan $\alpha \in \Gamma(\zeta,\frac{1}{2}\left[\delta_{-\overline{w}} + \delta_{\overline{w}}\right])$ by sending a mass $m/m_+$ from $\zeta_+ \coloneqq \zeta \resmes \{\left<v,\overline{w}\right> > \varepsilon_{+}\}$ to $\overline{w}$, a mass $m/m_-$ from $\zeta_{-} \coloneqq \zeta \resmes \{\left<v,\overline{w}\right> < -\varepsilon_-\}$ to $- \overline{w}$, and split the rest evenly; explicitly,
	\begin{align*}
		\alpha 
		\coloneqq m \left[\left(\frac{1}{m_+} \zeta_+\right) \otimes \delta_{\overline{w}} + \left(\frac{1}{m_-} \zeta_-\right) \otimes \delta_{-\overline{w}}\right] 
		+ (\zeta - \frac{m}{m^+} \zeta_+ - \frac{m}{m_-} \zeta_-) \otimes \left[\left(\frac{1}{2}-m\right)(\delta_{\overline{w}} + \delta_{-\overline{w}})\right].
	\end{align*}
	Then $\pi_{v \#} \alpha = \zeta$, and $\pi_{w \#} \alpha = \frac{1}{2} \left[\delta_{\overline{w}} + \delta_{-\overline{w}}\right]$. Moreover, 
	\begin{align*}
		\int_{(v,w) \in (\mathbb{R}^d)^2} \left<v,w\right> d\alpha
		= \frac{m}{m_+} \int_{v} \left<v,\overline{w}\right> d\zeta_+ + \frac{m}{m_-} \int_{v} \left<v,-\overline{w}\right> d\zeta_- + 0
		\geqslant m \varepsilon_+ + m \varepsilon_-
		> 0,
	\end{align*}
	against \Cref{orthGamma:disint}. Hence $\zeta\left\{ \left<v,\overline{w}\right> > \varepsilon_+ \right\} = 0$ for all $\varepsilon_+ > 0$, and since $\zeta$ is centred, we conclude.
\end{proof}

\filler{\Cref{res:orthGamma} will allow us to reduce a ``global'' orthogonality to a ``local'' one. We now come back to our $W_{\mu}-$closed, horizontally convex cone $\Set_{\mu}^0 \subset \Psp_2(\T\Sp)_{\mu}^0$, and introduce 
\begin{align*}
	\mathcal{F} \coloneqq \left\{ f \in L^2_{\mu}(\Sp;\mathbb{R}^d) \st \gamma_f \in \Set_{\mu}^0 \right\}.
\end{align*}
The following result shows that the set $\mathcal{F}$ is sufficient to characterize $(\Set_{\mu}^0)^{\perp 0}$. 
}

\begin{lem}\label{res:Fsuff}
	If $\xi \in \Psp_2(\T\Sp)_{\mu}^0$, then $\xi$ belongs to $(\Set_{\mu}^0)^{\perp 0}$ if and only if it is orthogonal to all $\gamma_f$ for $f \in \mathcal{F}$.
\end{lem}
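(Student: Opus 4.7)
The forward implication is immediate: since $\gamma_{f} \in \Set_{\mu}^0$ by definition of $\mathcal{F}$, any $\xi \in (\Set_{\mu}^0)^{\perp}$ satisfies $\left<\xi,\gamma_{f}\right>_{\mu} = 0$. For the converse, assume $\xi$ is centred and orthogonal to every $\gamma_{f}$, $f \in \mathcal{F}$, and fix $\zeta \in \Set_{\mu}^0$; \Cref{res:centredislocal} already delivers the lower bound $\left<\xi,\zeta\right>_{\mu} \geqslant 0$, so only the matching upper bound is at stake.

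My first move is to pin down the algebraic structure of $\mathcal{F}$. The cone property, the Lipschitz estimate $W_{\mu}(\gamma_{f},\gamma_{g}) \leqslant \|f-g\|_{L^2_{\mu}}$ recalled just above \Cref{def:gammaf}, and the interpolation at $\lambda = 1/2$ of the plan $\tfrac12[(id,f,g)_{\#}\mu + (id,-f,-g)_{\#}\mu] \in \Gamma_{\mu}(\gamma_{f},\gamma_{g})$ already show that $\mathcal{F}$ is a closed linear subspace of $L^2_{\mu}(\Sp;\mathbb{R}^d)$. A further plan $\alpha = \tfrac{1}{2}\bigl[(id,f,-f)_{\#}\mu \resmes A + (id,-f,f)_{\#}\mu \resmes A + (id,f,f)_{\#}\mu \resmes A^c + (id,-f,-f)_{\#}\mu \resmes A^c\bigr] \in \Gamma_{\mu}(\gamma_{f},\gamma_{f})$, whose interpolant at $\lambda = 1/2$ is exactly $\gamma_{\mathbf{1}_{A^c} f}$, makes $\mathcal{F}$ stable under multiplication by indicators, hence a closed $L^2_{\mu}$-submodule. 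A Castaing-type argument then furnishes a measurable Grassmannian section $D$ with $\mathcal{F} = \{f : f(x) \in D(x) \ \mu\text{-a.e.}\}$; concretely, for any $L^2$-dense sequence $(f_n)_n \subset \mathcal{F}$ one can take $D(x) := \overline{\operatorname{span}}\{f_n(x) : n \in \mathbb{N}\}$. Applying \Cref{res:orthGamma} to each $f_n$ and taking a countable union of null sets, $\xi$ is concentrated on $\{(x,v) : v \in D(x)^{\perp}\}$.

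Next I project: set $\eta := \pi_{\Set_{\mu}^0}^{\mu}\xi \in \Set_{\mu}^0$ via \Cref{res:projConvCone}. Because $-1 \cdot \gamma_{f} = \gamma_{f}$, the equality case of \Cref{projConvCone:statement} yields $\left<\eta,\gamma_{f}\right>_{\mu} = \left<\xi,\gamma_{f}\right>_{\mu} = 0$, so $\eta$ inherits the standing hypothesis and, by the same countable argument, is concentrated on $\graph D^{\perp}$. Once the intermediate claim $\eta = 0_{\mu}$ is secured, the triviality of the projection gives $W_{\mu}(\xi, t\zeta) \geqslant W_{\mu}(\xi, 0_{\mu}) = \|\xi\|_{\mu}$ for every $t \geqslant 0$; expanding $W_{\mu}^2$ and letting $t \searrow 0^+$ produces $\left<\xi,\zeta\right>_{\mu} \leqslant 0$, closing the proof.

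The main obstacle is therefore the intermediate claim $\eta = 0_{\mu}$, that is, the absence of any non-trivial element of $\Set_{\mu}^0$ concentrated on $\graph D^{\perp}$. To establish it I plan to exploit the tension between $\eta \in \Set_{\mu}^0$, which is built from symmetric fields $\gamma_{f}$ with $f(x) \in D(x)$ via horizontal convexity, positive scaling and $W_{\mu}$-limits, and the concentration of $\eta$ on $\graph D^{\perp}$ proved above, ultimately collapsing every fibre to $D(x) \cap D(x)^{\perp} = \{0\}$. Concretely, the fibrewise independent plan $\eta_x \otimes \gamma_{f(x)} \in \Gamma_{\mu}(\eta,\gamma_{f})$ interpolated in $\lambda$ produces elements of $\Set_{\mu}^0$ whose $D(x)^{\perp}$-component remains a rescaled copy of $\eta_x$; the orthogonality $\eta \perp \gamma_{f}$ prevents any cancellation with the $D(x)$-content, and promoting this rigidity to a limit as $\lambda \to 1^{-}$ should force the $D^{\perp}$-mass to vanish. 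I expect this promotion step to be the technical heart of the proof.
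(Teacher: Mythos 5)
Your overall scaffolding is sound and matches the paper's in outline: the forward direction is trivial, and for the converse you pass to the metric projection $\eta \coloneqq \pi^{\mu}_{\Set_{\mu}^0}\xi$, use the equality case of \Cref{projConvCone:statement} for the symmetric fields $\gamma_f$ (valid since $-1\cdot\gamma_f=\gamma_f$) to transfer the hypothesis to $\eta$, and observe that $\eta=0_{\mu}$ would give $\left<\xi,\zeta\right>_{\mu}\leqslant 0$ for all $\zeta\in\Set_{\mu}^0$, hence orthogonality by \Cref{res:centredislocal}. The auxiliary structure you build ($\mathcal{F}$ a closed subspace stable under multiplication by indicators, hence of the form $\{f : f(x)\in D(x)\ \mu$-a.e.$\}$, and $\xi,\eta$ concentrated on $\graph D^{\perp}$ via \Cref{res:orthGamma}) is correct, though not needed for this lemma.

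The genuine gap is the intermediate claim $\eta=0_{\mu}$, which you explicitly leave as a hope (``I expect this promotion step to be the technical heart of the proof''), and your sketch for it does not work as stated. First, your premise that $\eta\in\Set_{\mu}^0$ ``is built from symmetric fields $\gamma_f$ with $f(x)\in D(x)$ via horizontal convexity, positive scaling and $W_{\mu}$-limits'' is not available at this stage: that $\Set_{\mu}^0$ is generated by (or even characterized through) its symmetric elements is essentially the content of \Cref{res:charSet0}, which rests on this very lemma, so assuming it is circular. Second, the concrete mechanism you propose -- interpolating the fibrewise product plan between $\eta$ and some $\gamma_f$ with $f\in\mathcal{F}$ and letting $\lambda\to 1^{-}$ -- only produces elements of $\Set_{\mu}^0$ converging to $\gamma_f$, and yields no contradiction with $\eta$ being concentrated on $\graph D^{\perp}$: indeed $\left<\eta,\gamma_f\right>_{\mu}=0$ for every $f\in\mathcal{F}$ is exactly the hypothesis, so playing $\eta$ against fields already in $\mathcal{F}$ cannot force $\eta=0$. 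What is needed is to produce, from $\eta\neq 0_{\mu}$, a \emph{new} field $f_+\in\mathcal{F}$ with $\left<\eta,\gamma_{f_+}\right>_{\mu}>0$. The paper does this by first testing against the constant fields $\gamma_{e_i}$ (which need not lie in $\Set_{\mu}^0$) to get some $i$ with $\left<\eta,\gamma_{e_i}\right>_{\mu}>0$ via \Cref{res:orthGamma}; an optimal plan in $\Gamma_{\mu,o}(\eta,\gamma_{e_i})$ then splits $\eta=\frac12\zeta^0_++\frac12\zeta^0_-$, and the iterated fibrewise averaging $\zeta^{k+1}_{\pm}=(\pi_x,\frac12(\pi_v+\pi_w))_{\#}\,\zeta^k_{\pm}\otimes_{\mu}\zeta^k_{\pm}$, combined with horizontal convexity and $W_{\mu}$-closedness, shows that the symmetrization $\gamma_{f_+}$ of the barycenters $f_{\pm}=\pm f_+$ of the two halves belongs to $\Set_{\mu}^0$, i.e. $f_+\in\mathcal{F}$, with $\left<\eta,\gamma_{f_+}\right>_{\mu}\geqslant\|f_+\|^2_{L^2_{\mu}}>0$ and $\|f_+\|_{L^2_{\mu}}>0$ because $\left<\eta,\gamma_{e_i}\right>_{\mu}=\int\left<f_+,e_i\right>d\mu>0$ -- the desired contradiction. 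Some construction of this kind (extracting an element of $\mathcal{F}$ from $\eta$ itself) is indispensable, and it is precisely the piece missing from your proposal.
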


\filler{The proof uses a construction that makes it quite verbose, but not complicated.}

\begin{proof}
	One implication being direct, we show that if $\left<\xi, \gamma_f\right>_{\mu} = 0$ for all $f \in \mathcal{F}$, then $\xi \in (\Set_{\mu}^0)^{\perp 0}$. This is equivalent to $\pi^{\mu} \xi = 0_{\mu}$, where $\pi^{\mu} \xi$ is the metric projection of $\xi$ on $\Set_{\mu}^0$, given by \Cref{res:projConvCone}. In particular, as $\gamma_f = (\pi_x,-\pi_v)_{\#} \gamma_f$ for any $f \in \mathcal{F}$, there holds $\left<\xi,\gamma_{f}\right>_{\mu} = \left<\pi^{\mu} \xi, \gamma_f\right>_{\mu}$. Assume by contradiction that $\pi^{\mu} \xi \neq 0_{\mu}$: we construct $f \in \mathcal{F}$ such that $\left<\pi^{\mu} \xi, \gamma_f\right>_{\mu} > 0$, against the assumption on $\xi$.
	
	First consider the constant vector fields $g_i(x) \equiv e_i$, where $e_i$ is the $i^{\text{th}}$ element of the canonical basis of $\mathbb{R}^d$. By \Cref{res:orthGamma}, if $\left<\pi^{\mu} \xi, \gamma_{g_i}\right>_{\mu} = 0$ for all $i$, then $\left<v,e_i\right> = 0$ for $\pi^{\mu} \xi-$almost every $(x,v)$ and each $i \in \llbracket 1,d\rrbracket$, so that $\pi^{\mu} \xi = 0_{\mu}$. By contradiction, there must be $i \in \llbracket 1,d\rrbracket$ such that $\left<\pi^{\mu} \xi, \gamma_{g_i}\right>_{\mu} > 0$. 
	
	Pick $\alpha \in \Gamma_{\mu,o}(\pi^{\mu} \xi, \gamma_{g_i})$. Then $\alpha = \frac{1}{2} (\pi_x,\pi_v,-e_i)_{\#} \zeta^{0}_- + \frac{1}{2} (\pi_x,\pi_v,e_i)_{\#} \zeta^{0}_+$ for some $\zeta^{0}_{\pm} \in \Psp_2(\T\Sp)_{\mu}$ such that $\pi^{\mu} \xi = \frac{1}{2} \zeta^{0}_{+} + \frac{1}{2} \zeta^{0}_- \in \Set_{\mu}^0$. Construct inductively a sequence $(\zeta^{k}_{+},\zeta^{k}_{-})_{k \in \mathbb{N}}$ as follows; assuming that $\frac{1}{2} \zeta^{k}_{+} + \frac{1}{2} \zeta^{k}_-$ belongs to $\Set_{\mu}^0$, consider the transport plan $\beta \coloneqq \frac{1}{2} \zeta^{k}_+ \otimes_{\mu} \zeta^{k}_{+} + \frac{1}{2} \zeta^{k}_- \otimes_{\mu} \zeta^{k}_{-}$, where for any $\zeta \in \Psp_2(\T\Sp)_{\mu}$, the ``pointwise product plan'' is defined as
	\begin{align*}
		\zeta \otimes_{\mu} \zeta \coloneqq \int_{x \in \Sp} \left[\delta_x \otimes \zeta_x \otimes \zeta_x\right] d\mu(x) \in \Psp_2(\T^2\Sp)_{\mu}.
	\end{align*}
	Let $\zeta^{k+1}_{\pm} \coloneqq \left(\pi_x, \frac{1}{2} \left[\pi_v + \pi_w\right]\right)_{\#} \zeta^{k}_{\pm} \otimes_{\mu} \zeta^{k}_{\pm}$. Then 
	\begin{align*}
		\frac{1}{2} \zeta^{k+1}_{+} + \frac{1}{2} \zeta^{k+1}_-
		= \Big(\pi_x,\frac{\pi_v+\pi_w}{2} \Big)_{\#} \beta
		\in \Set_{\mu}^0
	\end{align*}
	by horizontal convexity. The barycenter of each $\zeta^{k}_{\pm}$ is preserved along the sequence: indeed, for any $\varphi \in \mathcal{C}(\T\Sp;\mathbb{R})$ linear in its second argument and with quadratic growth,
	\begin{align*}
		\int_{(x,v)} \varphi(x,v) d\zeta^{k+1}_{+}
		= \int_{(x,v,w)} \varphi\left(x,\frac{v+w}{2}\right) d\left[\zeta^{k}_{+} \otimes_{\mu} \zeta^{k}_{+}\right]
		= \frac{1}{2} \int_{(x,v)} \varphi(x,v) d\zeta^{k}_{+} +  \frac{1}{2} \int_{(x,w)} \varphi(x,w) d\zeta^{k}_{+},
	\end{align*} 
	hence $\bary{\zeta^{k+1}_{\pm}}{} = \bary{\zeta^{k}_{\pm}}{} \eqqcolon f_{\pm}$. In particular, $\frac{1}{2} f_{+} + \frac{1}{2} f_- = \bary{\frac{1}{2} \zeta^{0}_{+} + \frac{1}{2} \zeta^{0}_{-}}{} = \bary{\pi^{\mu} \xi}{} = 0$, so that $f_- = - f_{+}$. Now, the sequence $(\zeta_{+}^{k})_{k \in \mathbb{N}}$ converges to $(id,f_{+})_{\#} \mu$: indeed, 
	\begin{align*}
		W_{\mu}^2\left(\zeta^{k+1}_{+}, (id,f_{+})_{\#} \mu\right)
		&= \int_{(x,v,w)} \left|\frac{v+w}{2} - f_{+}(x)\right|^2 d\left[\zeta^{k}_{+} \otimes_{\mu} \zeta^{k}_{+}\right] \\
		&= 2 \times \frac{1}{4} \int_{(x,v)} |v-f_+(x)|^2 d\zeta^{k}_+ + 2 \int_{(x,v,w)} \left<\frac{v - f_+(x)}{2}, \frac{w - f_+(x)}{2}\right> d\left[\zeta^{k}_{+} \otimes_{\mu} \zeta^{k}_{+}\right] \\
		&= \frac{1}{2} W_{\mu}^{2}\left(\zeta^{k}_{+}, (id,f_{+})_{\#} \mu\right) + 0,
	\end{align*}
	where we used that $\Gamma_{\mu}(\xi,(id,g)_{\#} \mu)$ always reduces to a singleton whenever $g \in L^2_{\mu}$, and the definition of the pointwise product measure. The same argument implies that $\zeta^k_{-} \to_k (id,f_{-})_{\#} \mu = (id,- f_{+})_{\#} \mu$ with respect to $W_{\mu}$. Consequently, the sequence $\left(\frac{1}{2} \zeta^k_{+} + \frac{1}{2} \zeta^k_-\right)_{k \in \mathbb{N}} \subset \Set_{\mu}^0$ converges with respect to $W_{\mu}$ towards the centred field $\frac{1}{2} (id,f_{+})_{\#} \mu + \frac{1}{2} (id,f_{-})_{\#} \mu = \gamma_{f_+}$, which must belong to $\Set_{\mu}^0$. Now, recalling the definition of $\zeta^0_{\pm}$, there holds
	\begin{align*}
		0 
		< \left<\pi^{\mu} \xi, \gamma_{g_i}\right>_{\mu}
		= \frac{1}{2} \int_{(x,v)} \left<v, - e_i\right> d\zeta^0_- + \frac{1}{2} \int_{(x,v)} \left<v, e_i\right> d\zeta^0_+
%		= \frac{1}{2} \int \left<f_-(x), - e_i\right> d\mu + \frac{1}{2} \int \left<f_+(x), e_i\right> d\mu
		= \int \left<f_+(x), e_i\right> d\mu,
	\end{align*}
	so that $\|f_{+}\|_{\mu} > 0$; on the other hand, using that $\left<\zeta,(id,g)_{\#}\right>_{\mu} = \left<\bary{\zeta}{},g\right>_{L^2_{\mu}}$ for any measure field $\zeta \in \Psp_2(\T\Sp)_{\mu}$ and vector field $g \in L^2_{\mu}(\Sp;\mathbb{R}^d)$,
	\begin{align*}
		\left<\pi^{\mu} \xi, \gamma_{f_+}\right>_{\mu}
		\geqslant \frac{1}{2} \left<\zeta^{0}_+, (id,f_{+})_{\#} \mu\right>_{\mu} + \frac{1}{2} \left<\zeta^{0}_-, (id,f_{-})_{\#} \mu\right>_{\mu}
		= \frac{1}{2} \|f_{+}\|_{L^2_{\mu}}^2 + \frac{1}{2} \|f_{-}\|_{L^2_{\mu}}^2
		= \|f_+\|_{L^2_{\mu}}^2
		> 0.
	\end{align*}
	In conclusion, if $\pi^{\mu} \xi \neq 0_{\mu}$, we constructed $f_+ \in \mathcal{F}$ such that $0 < \left<\pi^{\mu} \xi, \gamma_{f_+}\right>_{\mu} = \left<\xi, \gamma_f\right>$, in contradiction with the assumption. Hence $\pi^{\mu} \xi = 0$, and $\xi \in (\Sol_{\mu}^0)^{\perp 0}$. 	
\end{proof}

%%%%%%%%%%%%%%%%%%%%%%%%%%%%%%%%%%%%%%%%%%%%%%%%%%%%%%%%%%%%
\subsection{Characterization by a Grassmannian section}
%%%%%%%%%%%%%%%%%%%%%%%%%%%%%%%%%%%%%%%%%%%%%%%%%%%%%%%%%%%%

\filler{We can now turn to the proof of \Cref{res:charSet0}. Our aim is to construct a Grassmannian section $D : \Sp \rightrightarrows \mathbb{R}^d$ such that $\xi \in \Set_{\mu}^0$ if and only if $\xi$ is centred and concentrated on $\graph D$.}

\begin{proof}[Proof of \Cref{res:charSet0}]
	Assume first that a Grassmannian section $D : \Sp \rightrightarrows \mathbb{R}^d$ is given, and let $\Set_{\mu}^0$ be the set of centred measure fields concentrated on $\graph D$. Clearly, $\Set_{\mu}^0$ is a convex cone of centred measure fields. Moreover, for each $x \in \Sp$, the set of measures in $\Psp_2(\T_x\Sp) \simeq \Psp_2(\mathbb{R}^d)$ which are concentrated on the closed set $D(x)$ is closed with respect to the Wasserstein distance. Hence $\Set_{\mu}^0$ is closed with respect to $W_{\mu} \simeq L^2_{\mu}(\Sp;\left(\Psp_2(\mathbb{R}^d);W\right))$.

	Let now $\Set_{\mu}^0$ be a $W_{\mu}-$closed convex cone of centred measure fields. 
	By \Cref{res:biorth}, $\Set_{\mu}^0 = ((\Set_{\mu}^0)^{\perp 0})^{\perp 0}$.
	Let $\mathcal{F}_{\perp} \coloneqq \left\{ f \in L^2_{\mu}(\Sp;\mathbb{R}^d) \st \gamma_f \in (\Set_{\mu}^0)^{\perp 0} \right\}$. Since $W_{\mu}(\gamma_f,\gamma_g) \leqslant \|f-g\|_{L^2_{\mu}}$ and $(\Set_{\mu}^0)^{\perp 0}$ is $W_{\mu}-$closed, the set $\mathcal{F}_{\perp}$ is closed in $L^2_{\mu}$, hence separable. Consider a countable dense set $(f_n)_{n \in \mathbb{N}} \subset \mathcal{F}_{\perp}$. From \Cref{res:Fsuff} and the continuity of the scalar product, $\xi \in \Set_{\mu}^0 = ((\Set_{\mu}^0)^{\perp 0})^{\perp 0}$ if and only if $\left<\xi,\gamma_{f_n}\right>_{\mu} = 0$ for all $n \in \mathbb{N}$. By \Cref{res:orthGamma}, the latter condition is equivalent to $\left<v,f_n^{\bdot}(x)\right> = 0$ for $\xi-$almost all $(x,v)$, where $f_n^{\bdot} : \Sp \to \mathbb{R}^d$ is a measurable map in the $L^2_{\mu}-$equivalence class $f_n$. For every $x \in \Sp$, define 
	\begin{align*}
		D(x) \coloneqq \left\{ v \in \mathbb{R}^d \st \left<v,f^{\bdot}_n(x)\right> = 0 \text{ for all } n \in \mathbb{N} \right\}.
	\end{align*}
	The application $D$ depends on the precise choice of $(f_n^{\bdot})_n$, but only up to a $\mu-$negligible subset. Each $D(x)$ is a vector space, and by \cite[Theorem 3.(e)]{rockafellarMeasurableDependenceConvex1969}, $D$ is measurable as a multivalued application. If $\xi$ is concentrated on the graph of $D$, then $\left<v,f_n^{\bdot}(x)\right> = 0$ for all $n$ $\xi-$almost everywhere. Conversely, if for any $n$, there exists $B^n$ such that $\xi(B^n) = 0$ and $\left<v,f_n^{\bdot}(x)\right> = 0$ for any $(x,v) \notin B^n$, then $B \coloneqq \bigcup_n B^n$ stays $\xi-$negligible, and $v \in D(x)$ for any $x \notin B$. Hence $\xi \in \Set_{\mu}^0$ if and only if $\xi(\graph D) = 1$, as claimed.
\end{proof}

\filler{An interesting corollary of \Cref{res:charSet0} is that $\Set_{\mu}^0$ is closed with respect to the (weaker) topology of the Wasserstein distance over the tangent bundle $W_{\T\Sp}$, i.e. with cost $c((x,v),(y,w)) \coloneqq \sqrt{|x-y|^2 + |v-w|^2}$. Here, it is necessary to restrict to centred measure fields: the geometric tangent cone $\Tan_{\mu}$ is a $W_{\mu}-$closed, horizontally convex and (two-sided) cone, but is not closed with respect to $W_{\T\Sp}$ in general (an example can be found below Proposition 2.10 in \cite{aussedatStructureGeometricTangent2025}).}

\begin{cor}[$W_{\T\Sp}-$closedness of $\Set_{\mu}^0$]\label{res:Set0closed}
	Assume that $\Set_{\mu}^0$ satisfies \Cref{hyp:Set0}. Let $(\xi_n)_{n \in \mathbb{N}} \subset \Set_{\mu}^0$ be a sequence converging to $\xi \in \Psp_2(\T\Sp)_{\mu}$ with respect to $W_{\T\Sp}$. Then $\xi \in \Set_{\mu}^0$. 
\end{cor}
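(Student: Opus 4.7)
The plan is to verify the two defining conditions of $\Set_{\mu}^0$ separately, using the characterization obtained in \Cref{res:charSet0}. Let $D : \Sp \rightrightarrows \mathbb{R}^d$ be a Grassmannian section associated to $\Set_{\mu}^0$ by that proposition, so that an element of $\Psp_2(\T\Sp)_{\mu}$ belongs to $\Set_{\mu}^0$ if and only if it is centred and concentrated on $\graph D$. We need to verify these two properties for $\xi$.

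\textbf{Concentration on $\graph D$.}\quad Since $\xi_n \in \Set_{\mu}^0$ for every $n$, there holds $\xi_n(\graph D) = 1$. As $\xi_n \to \xi$ in $W_{\T\Sp}$, \Cref{res:DWTSpClosed} applies verbatim (the set of measure fields with full mass on $\graph D$ is $W_{\T\Sp}$-closed) and yields $\xi(\graph D) = 1$.

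\textbf{Centring of $\xi$.}\quad For any $f \in \mathcal{C}_b(\Sp;\mathbb{R}^d)$, the test function $\varphi_f(x,v) \coloneqq \left<f(x),v\right>$ is continuous with $|\varphi_f(x,v)| \leqslant \|f\|_{\infty}(1+|v|^2)$. Since $W_{\T\Sp}$-convergence in $\Psp_2(\T\Sp)$ is equivalent to narrow convergence together with convergence of second moments \cite[Theorem~6.9]{villaniOptimalTransportOld2009}, it guarantees that $\int \varphi_f \, d\xi_n \to \int \varphi_f \, d\xi$. On the other hand, each $\xi_n$ is centred, so $\int \varphi_f \, d\xi_n = \int \left<f(x), b_n(x)\right> d\mu = 0$. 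Writing $b \in L^2_{\mu}(\Sp;\mathbb{R}^d)$ for the barycenter of $\xi$, we conclude $\int \left<f(x), b(x)\right> d\mu = 0$ for every $f \in \mathcal{C}_b$, and by density of $\mathcal{C}_b$ in $L^2_{\mu}(\Sp;\mathbb{R}^d)$, that $b = 0$ in $L^2_{\mu}$.

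Combining both points, $\xi$ is centred and concentrated on $\graph D$, hence $\xi \in \Set_{\mu}^0$. The only minor technical point is the justification of the limit $\int \varphi_f d\xi_n \to \int \varphi_f d\xi$ for the unbounded linear-in-$v$ function $\varphi_f$; this is standard under $W_{\T\Sp}$-convergence thanks to the uniformly controlled second moments, so no real obstacle arises once the characterization of \Cref{res:charSet0} is available.
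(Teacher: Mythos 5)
Your proposal is correct and takes essentially the same route as the paper: concentration on $\graph D$ follows from \Cref{res:DWTSpClosed}, and the vanishing of the barycenter follows by testing against continuous, quadratically growing functions linear in $v$ (via the equivalence of $W_{\T\Sp}$-convergence with convergence of such integrals), combined with the characterization of \Cref{res:charSet0}. Your extra step of restricting to $f \in \mathcal{C}_b$ and invoking density in $L^2_{\mu}$ is just a slightly more explicit rendering of the paper's one-line argument.
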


\begin{proof}
	By \cite[Theorem 6.9]{villaniOptimalTransport2009}, convergence with respect to $W_{\T\Sp}$ is equivalent to the convergence of $\int_{(x,v)} \varphi(x,v) d\xi_n \to_n \int_{(x,v)} \varphi(x,v) d\xi$ for any continuous and quadratically growing $\varphi : \T\Sp \to \mathbb{R}$. In consequence, $\int \varphi(x,\bary{\xi}{}(x)) d\mu = 0$ for any $\varphi$ linear with respect to its second argument, and $\bary{\xi}{} = 0$. By \Cref{res:DWTSpClosed}, the set of $\xi$ such that $\xi(\graph D) = 1$ is $W_{\T\Sp}-$closed, hence $\xi \in \Set_{\mu}^0$.
\end{proof}

\filler{The characterization of $\Set_{\mu}^0$ provides many examples of closed convex cones of centred fields; one just has to choose the map $D$. However, the following section focuses on two particular subsets of measure fields which are not \emph{a priori} constructed from such maps, but can be proved to be closed and horizontally convex, yielding an additional structure.}

%%%%%%%%%%%%%%%%%%%%%%%%%%%%%%%%%%%%%%%%%%%%%%%%%%%%%%%%%%%%%%%%%%%%%%%%%%%%%%%%%%%
%%%%%%%%%%%%%%%%%%%%%%%%%%%%%%%%%%%%%%%%%%%%%%%%%%%%%%%%%%%%%%%%%%%%%%%%%%%%%%%%%%%
\section{Tangent and solenoidal measure fields}\label{sec:tansol}
%%%%%%%%%%%%%%%%%%%%%%%%%%%%%%%%%%%%%%%%%%%%%%%%%%%%%%%%%%%%%%%%%%%%%%%%%%%%%%%%%%%
%%%%%%%%%%%%%%%%%%%%%%%%%%%%%%%%%%%%%%%%%%%%%%%%%%%%%%%%%%%%%%%%%%%%%%%%%%%%%%%%%%%

\filler{We introduce the geometric tangent cone $\Tan_{\mu}$ in its classical definition, as well as its metric orthogonal, and immediately restrict our attention to their centred subsets.}

\begin{definition}[$\Tan_{\mu}^0$ and $\Sol_{\mu}^0$]\label{def:TanSol0}
	The geometric tangent cone is defined as 
	\begin{align*}
		\Tan_{\mu} 
		\coloneqq \overline{\left\{ \lambda \cdot \xi \st \lambda \geqslant 0, \ (\pi_x,\pi_x+\pi_v)_{\#} \xi \text{ is optimal between its marginals} \right\}}^{W_{\mu}}
		\subset \Psp_2(\T\Sp)_{\mu}.
	\end{align*}
	The solenoidal cone $\Sol_{\mu}$ is defined as $(\Tan_{\mu})^{\perp}$, i.e. 
	\begin{align*}
		\Sol_{\mu} 
		\coloneqq \left\{ \zeta \in \Psp_2(\T\Sp)_{\mu} \st \left<\xi,\zeta\right>_{\mu} = 0 \text{ for all } \xi \in \Tan_{\mu} \right\}.
	\end{align*}
	The centred cones $\Tan_{\mu}^0$ and $\Sol_{\mu}^0$ are defined as the intersections of $\Tan_{\mu},\Sol_{\mu}$ with $\Psp_2(\T\Sp)_{\mu}^0$.
\end{definition}

\begin{rem}[Alternative definition]\label{rem:barycentreddecomp}
	One easily shows that for any $\xi,\zeta \in \Psp_2(\T\Sp)_{\mu}$, there holds
	\begin{align}\label{barycentreddecomp:splitps}
		\left<\xi,\zeta\right>_{\mu} = \left<\xi^0,\zeta^0\right>_{\mu} + \left<\bary{\xi}{}, \bary{\zeta}{}\right>_{L^2_{\mu}},
	\end{align}
	where $\xi^0,\zeta^0$ are the centred components of $\xi,\zeta$, i.e. $\xi^0 = (\pi_x, \pi_v - \bary{\xi}{})_{\#} \xi$. Therefore the sets $\Tan_{\mu}^0$ and $\Sol_{\mu}^0$ coincide with the sets of metric projections of $\Tan_{\mu}, \Sol_{\mu}$ over the closed, convex cone $\Psp_2(\T\Sp)_{\mu}^0$. The orthogonal decomposition \Cref{barycentreddecomp:splitps} also implies that $\xi$ is tangent (resp. $\zeta$ solenoidal) if and only if $\xi^0$ and $(id,\bary{\xi}{})_{\#} \mu$ are tangent (resp. $\zeta^0$ and $(id,\bary{\zeta}{})_{\#} \mu$ solenoidal).
\end{rem}

\filler{Our first result on $\Tan_{\mu}^0$ and $\Sol_{\mu}^0$ is an application of \Cref{res:charSet0}. If $D \rightrightarrows \mathbb{R}^d$ is a Grassmannian section, denote by $D^{\perp}$ the application such that $D^{\perp}(x)$ is the orthogonal complement of $D(x)$ in $\mathbb{R}^d$.}

\begin{cor}\label{res:charTanSol0}
	Let $\mu \in \Psp_2(\Sp)$. There exists a Grassmannian section $D : \Sp \rightrightarrows \mathbb{R}^d$ such that 
	\begin{align}\label{charTanSol0:statement}
		\begin{matrix}
			\zeta \in \Sol_{\mu}^{0} &\iff & \big[\zeta \in \Psp_2(\T\Sp)_{\mu}^0 & \text{and} & \zeta(\graph D) = 1\big], \vspace{5pt}\\
			\xi \in \Tan_{\mu}^{0} &\iff & \big[\xi \in \Psp_2(\T\Sp)_{\mu}^0 & \text{and} & \xi(\graph D^{\perp}) = 1\big].
		\end{matrix}
	\end{align}
\end{cor}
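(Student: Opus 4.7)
My strategy is to apply \Cref{res:charSet0} to $\Sol_{\mu}^0$ to extract a Grassmannian section $D$, then identify $\Tan_{\mu}^0$ with $(\Sol_{\mu}^0)^{\perp}$ and show that this orthogonal coincides with the centred fields concentrated on $\graph D^{\perp}$.

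First I would check that $\Sol_{\mu}^0$ and $\Tan_{\mu}^0$ are $W_{\mu}$-closed, horizontally convex positive cones of centred fields, so that the results of \Cref{sec:cones} apply. Closure and the cone property follow directly from continuity of the scalar product and linearity of $\lambda \cdot$; horizontal convexity of $\Sol_{\mu}^0$ reproduces the argument of the first \Cref{res:orthGamma} applied to the centred part $\xi^0$ of a test element $\xi \in \Tan_{\mu}$, using \Cref{barycentreddecomp:splitps} to reduce to centred $\xi$, while horizontal convexity of $\Tan_{\mu}$ is standard for the geometric tangent cone. Applying \Cref{res:charSet0} to $\Sol_{\mu}^0$ produces $D$ and the first equivalence of \Cref{charTanSol0:statement}. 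To connect $\Tan_{\mu}^0$ with the same $D$, I would invoke \Cref{res:biorth} to get $\Tan_{\mu}^0 = ((\Tan_{\mu}^0)^{\perp})^{\perp}$, and check $(\Tan_{\mu}^0)^{\perp} = \Sol_{\mu}^0$: for centred $\zeta$, \Cref{barycentreddecomp:splitps} gives $\langle \xi, \zeta \rangle_{\mu} = \langle \xi^0, \zeta \rangle_{\mu}$, and \Cref{rem:barycentreddecomp} ensures $\xi^0 \in \Tan_{\mu}^0$ whenever $\xi \in \Tan_{\mu}$, so testing $\zeta$ against $\Tan_{\mu}$ is equivalent to testing against $\Tan_{\mu}^0$.

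The core step is to identify $(\Sol_{\mu}^0)^{\perp}$ with the set of centred fields concentrated on $\graph D^{\perp}$. For the inclusion $\supseteq$, if $\xi$ is centred with $\xi(\graph D^{\perp}) = 1$ and $\zeta \in \Sol_{\mu}^0$, disintegrations satisfy $\supp \xi_x \subseteq D^{\perp}(x)$ and $\supp \zeta_x \subseteq D(x)$ for $\mu$-almost every $x$, so $\langle v, w \rangle = 0$ on the product of supports, every plan $\alpha_x \in \Gamma(\xi_x, \zeta_x)$ has null integrand, hence $\langle \xi_x, \zeta_x \rangle_{\delta_x} = 0$, and \Cref{res:centredislocal} yields $\langle \xi, \zeta \rangle_{\mu} = 0$. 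For the reverse inclusion $\subseteq$, given $\xi \in (\Sol_{\mu}^0)^{\perp}$, I would use a Castaing representation $D(x) = \vspan\{u_1(x), \ldots, u_d(x)\}$ as in the proof of \Cref{res:DWTSpClosed}, truncate by $f_n^i \coloneqq u_i \ind_{\{|u_i| \leqslant n\}} \in L^2_{\mu}(\Sp; \mathbb{R}^d)$ (still $D$-valued), observe that $\gamma_{f_n^i} \in \Sol_{\mu}^0$ since it is centred and concentrated on $\graph D$, and apply \Cref{res:orthGamma} (the characterization of orthogonality to $\gamma_f$) to obtain $\langle v, f_n^i(x) \rangle = 0$ for $\xi$-almost every $(x,v)$. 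Letting $n \to \infty$ and intersecting the resulting full-measure sets over $i \in \llbracket 1, d \rrbracket$ gives $v \in D^{\perp}(x)$ for $\xi$-almost every $(x,v)$.

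The main delicate point is the $L^2_{\mu}$-integrability of the measurable selections of $D$ used to probe $(\Sol_{\mu}^0)^{\perp}$; this is solved by the Castaing-plus-truncation step, which works precisely because $D(x)$ is a vector subspace and is therefore stable under pointwise scalar truncation.
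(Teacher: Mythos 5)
Your proof is correct and follows essentially the same route as the paper: check that $\Tan_{\mu}^0$ (and hence $\Sol_{\mu}^0 = (\Tan_{\mu}^0)^{\perp}$) satisfies the hypotheses of \Cref{res:charSet0}, extract $D$ from $\Sol_{\mu}^0$, and identify $\Tan_{\mu}^0 = ((\Tan_{\mu}^0)^{\perp})^{\perp} = (\Sol_{\mu}^0)^{\perp}$ with the centred fields concentrated on $\graph D^{\perp}$ by testing against the fields $\gamma_f$ built from ($L^2_{\mu}$-truncated) Castaing selections of $D$, via \Cref{res:biorth} and \Cref{res:orthGamma}. The only organizational difference is that the paper applies \Cref{res:charSet0} a second time to $\Tan_{\mu}^0$ and then proves the resulting section equals $D^{\perp}$ through a chain of equivalences invoking \Cref{res:Fsuff}, whereas you prove the two inclusions directly, the easy one by the locality of the metric scalar product (\Cref{res:centredislocal}) and the hard one by the truncation argument; both carry the same content.
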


In the sequel, the application $D$ will often be denoted $D^{\Sol}$, and $D^{\perp}$ will be denoted $D^{\Tan}$.

\begin{proof}
	The fact that $\Tan_{\mu}^0$ is a $W_{\mu}-$closed positive cone of centred fields is direct from the definition. The fact that it is horizontally convex is proved in \cite[Proposition 4.25]{gigliGeometrySpaceProbability2008}, first by considering optimal plans and using cyclical monotonicity, then by approximation. The decomposition of the metric scalar product in \Cref{barycentreddecomp:splitps} yields that $\Sol_{\mu}^{0} = (\Tan_{\mu}^0)^{\perp 0}$, so that by \Cref{res:orthGamma}, $\Sol_{\mu}^0$ is also a closed convex cone of centred measure fields. Hence the existence of $D$ characterizing $\Sol_{\mu}^0$ as in \Cref{charTanSol0:statement} follows by \Cref{res:charSet0}. The same result yields a Grassmannian section $\widetilde{D} : \Sp \rightrightarrows \mathbb{R}^d$ characterizing $\Tan_{\mu}^0$; to conclude, there only stays to show that $\widetilde{D} = D^{\perp}$ $\mu-$almost everywhere. But this is implied by the following equivalences for $\xi$ centred:
	\begin{align*}
		&&&\xi \text{ concentrated on } (x,v) \text{ such that } v \in \widetilde{D}(x) \\
		\text{\small\Cref{res:charSet0}} &&\iff \quad& \xi \in \Tan_{\mu}^0 \\
		\text{\small\Cref{res:biorth,res:Fsuff}} && \iff \quad& \left<\xi,\gamma_{f}\right>_{\mu} = 0 \text{ for any } f \in L^2_{\mu} \text{ such that } \gamma_f \in \Sol_{\mu}^0 \\
		\text{\small\Cref{res:charSet0}} &&\iff \quad& \xi \text{ concentrated on } (x,v) \text{ such that } \left<v, f^{\bdot}(x)\right> = 0 \text{ for all } f^{\bdot} \underset{L^2_{\mu}}{\subset} D,
	\end{align*}
	where in the last line, $f^{\bdot}$ ranges in all measurable functions such that $f^{\bdot}(x) \in D(x)$ for all $x \in \Sp$, and $\int |f^{\bdot}|^2 d\mu < \infty$. The fact that sufficiently many of such $f^{\bdot}$ can be found is given by the Castaing representation of the measurable map $D$, for instance in \cite[Theorem 3.(d)]{rockafellarMeasurableDependenceConvex1969}.
\end{proof}

\filler{By \Cref{res:Set0closed}, both $\Tan_{\mu}^0$ and $\Sol_{\mu}^0$ are closed with respect to the Wasserstein distance $W_{\T\Sp}$ on the tangent bundle. In addition, the set of solenoidal measure fields (possibly with nonzero barycenter) is closed in the same topology.}

\begin{cor}[$\Sol_{\mu}$ is $W_{\T\Sp}-$closed]
	Let $\mu \in \Psp_2(\Sp)$. The set $\Sol_{\mu}$ is closed with respect to $W_{\T\Sp}$. 
\end{cor}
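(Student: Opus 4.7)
Let $(\zeta_n)_n \subset \Sol_{\mu}$ converge to $\zeta \in \Psp_2(\T\Sp)_{\mu}$ with respect to $W_{\T\Sp}$; I want to show $\zeta \in \Sol_{\mu}$. By \Cref{rem:barycentreddecomp}, this amounts to two separate conditions on $\zeta$: (i) the barycenter $\bary{\zeta}{}$ lies in $T_{\mu}^{\perp} \coloneqq \big(\overline{\nabla C_c^{\infty}(\Sp)}^{L^2_{\mu}}\big)^{\perp}$, and (ii) the centred part $\zeta^0 \coloneqq (\pi_x, \pi_v - \bary{\zeta}{}(x))_{\#}\zeta$ lies in $\Sol_{\mu}^0$. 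I would verify each of these separately.

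\textbf{Barycenter condition.} For every $\varphi \in C_c^{\infty}(\Sp)$, the map-induced field $(id,\nabla\varphi)_{\#}\mu$ belongs to $\Tan_{\mu}$, hence $\langle \zeta_n, (id,\nabla\varphi)_{\#}\mu\rangle_{\mu} = \int \langle v, \nabla\varphi(x)\rangle d\zeta_n = 0$ for every $n$. The integrand is continuous on $\T\Sp$ with at most linear growth in $v$, so by \cite[Theorem 6.9]{villaniOptimalTransport2009} (invoked in the same fashion as in the proof of \Cref{res:Set0closed}) the $W_{\T\Sp}$-convergence yields $\int \langle v,\nabla\varphi\rangle d\zeta = 0$. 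Since this holds for every $\varphi \in C_c^{\infty}$ and the gradients are $L^2_{\mu}$-dense in $\overline{\nabla C_c^{\infty}}^{L^2_{\mu}}$, condition (i) follows.

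\textbf{Centred part.} Let $D$ be the Grassmannian section associated to $\Sol_{\mu}^0$ by \Cref{res:charTanSol0}. Each $\zeta_n^0 = (\pi_x, \pi_v - b_n(x))_{\#}\zeta_n$ lies in $\Sol_{\mu}^0$, which is $W_{\T\Sp}$-closed by \Cref{res:Set0closed}. The approach is to prove $\zeta_n^0 \to \zeta^0$ in $W_{\T\Sp}$ and conclude (ii) by closedness. Since centring by a vector field is $W_{\T\Sp}$-continuous provided that the centring field converges strongly in $L^2_{\mu}$, this reduces to showing $b_n \to \bary{\zeta}{}$ strongly in $L^2_{\mu}$.

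\textbf{Main obstacle.} The strong convergence of barycenters is the heart of the argument. $W_{\T\Sp}$-convergence of $\zeta_n$ gives only weak $L^2_{\mu}$-convergence $b_n \rightharpoonup \bary{\zeta}{}$, obtained by testing against $\langle g(x), v\rangle$ for continuous $g$ of linear growth (which is $L^2_{\mu}$-dense). The upgrade to strong convergence is forced by the rigidity of $\Sol_{\mu}$: one combines the orthogonal energy identity $\int |v|^2 d\zeta_n = \|b_n\|_{L^2_{\mu}}^2 + \|\zeta_n^0\|_{\mu}^2$ (valid because $\zeta_n^0$ is centred), the $W_{\T\Sp}$-convergence of the second moments $\int|v|^2 d\zeta_n \to \int|v|^2 d\zeta$, and the concentration of each $\zeta_n^0$ on $\graph D$. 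The measurable section $D$ is approximated by continuous sections on closed sets $C_{\varepsilon} \subset \Sp$ with $\mu(C_{\varepsilon}) \geqslant 1-\varepsilon$ via Lusin's theorem (as in \Cref{res:DWTSpClosed}); on such sets the projection $v \mapsto p_x^{\perp}(v)$ onto $D^{\perp}(x)$ is continuous, and the identity $p_x^{\perp}(v) = p_x^{\perp}(b_n(x))$ valid $\zeta_n$-a.e. transfers the $L^2_{\mu}$-mass of $b_n^{\perp}$ onto the integrand $|p_x^{\perp}(v)|^2$, which is testable under $W_{\T\Sp}$-convergence. Matching lower semicontinuity (from weak convergence of $b_n$) and upper semicontinuity (from the energy identity and the Lusin approximation) at each scale $\varepsilon$, then letting $\varepsilon \searrow 0$, produces $\|b_n\|_{L^2_{\mu}} \to \|\bary{\zeta}{}\|_{L^2_{\mu}}$ and hence the required strong convergence, which completes the proof via \Cref{res:Set0closed}.
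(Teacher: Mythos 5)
Your first step (the barycenter of the limit is orthogonal to $\overline{\nabla \mathcal{C}^{\infty}_c}^{L^2_{\mu}}$, obtained by testing $\left<v,\nabla\varphi(x)\right>$ under $W_{\T\Sp}$-convergence) is correct and is exactly how the paper handles the barycenter. The gap is in your treatment of the centred part: you reduce it to the strong $L^2_{\mu}$ convergence $b_n \coloneqq \bary{\zeta_n}{} \to \bary{\zeta}{}$, and this intermediate claim is false in general — it is precisely the ``weak escape of barycenters'' discussed right after the corollary. Concretely, take $\mu$ the Lebesgue measure on $[0,1]^2$, $\psi_n(x_1,x_2) \coloneqq n^{-1}\sin(n\pi x_1)\sin(\pi x_2)$ and $g_n \coloneqq (-\partial_{x_2}\psi_n,\partial_{x_1}\psi_n)$. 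Since $\psi_n$ vanishes on the boundary, $\left<g_n,\nabla\varphi\right>_{L^2_{\mu}} = 0$ for every $\varphi \in \mathcal{C}^{\infty}_c$, so each $\zeta_n \coloneqq (id,g_n)_{\#}\mu$ is solenoidal; the $g_n$ are uniformly bounded, converge weakly but not strongly to $0$ in $L^2_{\mu}$, and (up to a subsequence) $\zeta_n$ converges in $W_{\T\Sp}$ to a Young-measure-type field $\zeta$ with $\bary{\zeta}{} = 0$ and $\|\zeta\|_{\mu} > 0$. Then $\zeta_n^0 = 0_{\mu}$ while $\zeta^0 = \zeta \neq 0_{\mu}$, so neither $\|b_n\|_{L^2_{\mu}} \to \|\bary{\zeta}{}\|_{L^2_{\mu}}$ nor $\zeta_n^0 \to \zeta^0$ holds: the whole strategy of passing to the limit in the centred parts and invoking \Cref{res:Set0closed} collapses, not just its proof. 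Your compensation argument cannot rescue it, because the identity $p^{\perp}_x(v) = p^{\perp}_x(b_n(x))$ only controls the component of $b_n$ in $D^{\perp}(x)$; the oscillation that destroys strong compactness lives inside $D^{\Sol}(x)$ (in the example $D^{\Sol} \equiv \mathbb{R}^2$, so the projection argument is vacuous).

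The paper's proof never passes to the limit in the centred parts. The key rigidity statement is about the whole fields: every $\zeta \in \Sol_{\mu}$ (barycenter included) is concentrated on $\graph D^{\Sol}$. Indeed $g \coloneqq \bary{\zeta}{}$ induces a solenoidal field, hence so do $(id,-g)_{\#}\mu$ and the symmetrization $\gamma_g$; the latter is centred, so \Cref{res:charTanSol0} gives $g(x) \in D^{\Sol}(x)$ $\mu$-a.e., and since $D^{\Sol}(x)$ is a subspace, adding the barycenter back keeps $\zeta$ concentrated on $\graph D^{\Sol}$. Applying this to each $\zeta_n$ and using \Cref{res:DWTSpClosed}, the limit satisfies $\zeta(\graph D^{\Sol}) = 1$; consequently $\zeta^0$ is centred and concentrated on $\graph D^{\Sol}$, hence belongs to $\Sol_{\mu}^0$ by \Cref{res:charTanSol0} directly — no convergence of $\zeta_n^0$ is needed, and the only limit taken is the weak (test-function) one for the barycenter, which is the part of your argument that was already correct.
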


\begin{proof}
	We first show that any $\zeta \in \Sol_{\mu}$ is concentrated on $\graph D$. By \Cref{rem:barycentreddecomp}, if $\zeta \in \Sol_{\mu}$, then $g \coloneqq \bary{\zeta}{} \in L^2_{\mu}(\Sp;\mathbb{R}^d)$ is such that $(id,g)_{\#} \mu \in \Sol_{\mu}$. By \cite[Lemmata 2.4 and 2.7]{aussedatStructureGeometricTangent2025}, both measure fields $(id,-g)_{\#} \mu$ and $\frac{1}{2} \left[(id,g)_{\#} \mu + (id,-g)_{\#} \mu\right]$ are solenoidal. Applying \Cref{res:charTanSol0} to the latter, we get that $g(x) \in D^{\Sol}(x)$ for $\mu-$almost every $x$. As the centred field $\zeta^0 \coloneqq (\pi_x,\pi_v-g(\pi_x))_{\#} \zeta$ is solenoidal, it is also concentrated on $\graph D$, thus so is $\zeta = (\pi_x,\pi_v + g(\pi_x))_{\#} \zeta^0$.
	
	Let now $(\zeta_n)_{n \in \mathbb{N}} \subset \Sol_{\mu}$ be a Cauchy sequence with respect to $W_{\T\Sp}$, and $\zeta \in \Psp_2(\T\Sp)_{\mu}$ be its limit. By \Cref{res:DWTSpClosed}, $\zeta(\graph D) = 1$. Consequently, the centred component $\zeta^0 \coloneqq (\pi_x,\pi_v-\bary{\zeta}{}(\pi_x))_{\#} \zeta$ is concentrated on $\graph D$, hence solenoidal. To show that $\bary{\zeta}{}$ also induces a solenoidal field, it is enough to prove that $\left<f,\bary{\zeta}{}\right>_{L^2_{\mu}} = 0$ for any $f \in L^2_{\mu}(\Sp;\mathbb{R}^d)$ such that $(id,f)_{\#} \mu$ is tangent (by \Cref{barycentreddecomp:splitps}). In addition, we may let $f = \nabla \varphi$ for $\varphi \in \mathcal{C}^{\infty}_c(\Sp;\mathbb{R})$ \cite[Theorem 4.14]{gigliGeometrySpaceProbability2008}. In particular, $(x,v) \mapsto \left<f(x),v\right>$ is continuous and has quadratic growth, so that
	\begin{align*}
		\left<f,\bary{\zeta}{}\right>_{L^2_{\mu}}
		= \int_{(x,v) \in \T\Sp} \left<f(x),v\right> d\zeta 
		= \lim_{n \to \infty} \int_{(x,v) \in \T\Sp} \left<f(x),v\right> d\zeta_n
		= \lim_{n \to \infty} \left<f, \bary{\zeta_n}{}\right>_{L^2_{\mu}}
		= 0.
	\end{align*}
	In conclusion, both $\zeta^0$ and $(id,\bary{\zeta}{})_{\#} \mu$ belong to $\Sol_{\mu}$, so that $\zeta$ is solenoidal.
\end{proof}

\filler{One may be tricked into thinking that $\Tan_{\mu}$ should be $W_{\T\Sp}-$closed by the same arguments. This is not correct: it does not hold that $\xi(\graph D^{\Tan}) = 1$ for any tangent $\xi$. It \emph{does} hold that 
\begin{itemize}
\item $\Tan_{\mu}^0$ is $W_{\T\Sp}-$closed (by \Cref{res:Set0closed}), 
\item the set of $f \in L^2_{\mu}(\Sp;\mathbb{R}^d)$ such that $(id,f)_{\#} \mu \in \Tan_{\mu}$ is weakly closed in $L^2_{\mu}$ (by the previous proof), 
\end{itemize}
but the latter set of vector fields is not strongly closed in $L^2_{\mu}$, and the oscillations captured by the $W_{\T\Sp}-$limit may get out of $D^{\Tan}$. On the other hand, any map inducing a solenoidal field is already valued in $D^{\Sol}$, so that oscillations can only produce measure fields that are again concentrated on $D^{\Sol}$.
}

%%%%%%%%%%%%%%%%%%%%%%%%%%%%%%%%%%%%%%%%%%%%%%%%%%%%%%%%%%%%
\subsection{Stability with respect to restriction}
%%%%%%%%%%%%%%%%%%%%%%%%%%%%%%%%%%%%%%%%%%%%%%%%%%%%%%%%%%%%

\filler{The aim of this section is to investigate how $\Tan_{\mu}^0$ and $\Sol_{\mu}^0$ depend on the local properties of the underlying measure $\mu$. We start by showing that the centred solenoidal spaces are stable by restriction of measures. This is valid only on centred measure fields; \Cref{rem:centredNecessary} below provides a counterexample in the case of map-induced fields.}

Given a measurable set $A \subset \Sp$, denote by $\mu \resmes A$ the measure given by $(\mu \resmes A)(B) \coloneqq \mu(A \cap B)$ for any measurable $B \subset \Sp$, and by $\T A \subset \T\Sp$ the set $\left\{ (x,v) \st x \in A, \ v \in \T_x\Sp \right\}$. 

\begin{prop}[Restriction of centred solenoidal measure fields]\label{res:restr:Sol}
	Let $\mu \in \Psp_2(\Sp)$ and $A \subset \Sp$ be a measurable set such that $\mu(A) \in (0,1)$. Denote $\mu_{A} \coloneqq (\mu \resmes A)/\mu(A)$. Then 
	\begin{align*}
		\Sol_{\mu_A}^0 = \left\{ (\zeta \resmes \T A) / \mu(A) \st \zeta \in \Sol_{\mu}^0 \right\}.
	\end{align*}
\end{prop}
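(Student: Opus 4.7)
The approach is to use the Grassmannian section characterization of Corollary \ref{res:charTanSol0}, which identifies $\Sol_\mu^0$ and $\Sol_{\mu_A}^0$ with the centred measure fields concentrated on the graphs of sections $D_\mu$ and $D_{\mu_A}$ respectively. The statement reduces to showing $D_\mu(x) = D_{\mu_A}(x)$ for $\mu_A$-a.e.\ $x$: once this equality holds, the inclusion $\subset$ is obtained by extending any $\eta \in \Sol_{\mu_A}^0$ to $\tilde\eta \coloneqq \mu(A)\eta + (1-\mu(A))(id,0)_{\#}\mu_{A^c}$, which is centred with marginal $\mu$ and concentrated on $\graph D_\mu$ (using $0 \in D_\mu(x)$ on $A^c$ and $D_{\mu_A} = D_\mu|_A$ on $A$), hence lies in $\Sol_\mu^0$ with $\tilde\eta \resmes \T A / \mu(A) = \eta$; the inclusion $\supset$ is analogous, the restriction $\zeta_A$ of any $\zeta \in \Sol_\mu^0$ being centred over $\mu_A$ and concentrated on $\graph D_\mu|_A = \graph D_{\mu_A}$.

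The inclusion $D_{\mu_A}(x) \subset D_\mu(x)$ comes from a restriction lemma for the tangent cone: $\xi \in \Tan_\mu^0$ implies $\xi_A \in \Tan_{\mu_A}^0$, because restricting an optimal transport plan to $A \times \Sp$ preserves $c$-cyclical monotonicity of its support (hence optimality between its new marginals), and restriction of a joint plan $\alpha \in \Gamma_\mu(\xi_n,\xi)$ yields $\alpha \resmes (A \times \mathbb{R}^d \times \mathbb{R}^d)/\mu(A) \in \Gamma_{\mu_A}((\xi_n)_A,\xi_A)$ with smaller cost. Applied to $\xi = \gamma_f$ and combined with Lemma \ref{res:Fsuff}, this yields the inclusion $D_\mu^\perp|_A \subset D_{\mu_A}^\perp$ of the tangent sections, whose fibrewise orthogonal gives the claimed containment.

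The reverse inclusion $D_\mu(x) \subset D_{\mu_A}(x)$ is the delicate step, encoding the local intrinsic nature of tangent directions. Via Lemmata \ref{res:Fsuff} and \ref{res:orthGamma}, it reduces to showing that every $f \in L^2_{\mu_A}$ with $\gamma_f^A \in \Tan_{\mu_A}^0$ satisfies $f(x) \in D_\mu^\perp(x)$ for $\mu_A$-a.e.\ $x$. My plan is to approximate $\gamma_f^A$ in $W_{\mu_A}$ by velocities of optimal plans $\eta_n$ over $\mu_A$ at vanishing scale $\lambda_n \to 0$, extend each $\eta_n$ by the identity map on $A^c$ to a transport plan over $\mu$, and verify that at $\mu$-density points of $A$ the failure of cyclical monotonicity of the extension is of order $o(\lambda_n^2)$; the rescaled velocities then converge in $W_\mu$ to $\gamma_{\tilde f}$ inside $\Tan_\mu^0$, where $\tilde f$ denotes the extension of $f$ by zero. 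The main obstacle is this density-point analysis: the naive extension by the identity can fail cyclical monotonicity at points of $A^c$ close to $\partial A$, so the scale $\lambda_n$ must be taken sufficiently small to localize the deformation and control the error uniformly.
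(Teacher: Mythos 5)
Your reduction to the equality of Grassmannian sections, and the easy half of it, are sound: restricting (approximants of) tangent fields is handled exactly as in the paper, via restriction of optimality and restriction of plans in $\Gamma_\mu$, and this gives $D^{\Tan}_\mu(x)\subset D^{\Tan}_{\mu_A}(x)$ for $\mu_A$-a.e.\ $x$, i.e.\ the inclusion $\Sol_{\mu_A}^0\subset\{(\zeta\resmes\T A)/\mu(A)\,:\,\zeta\in\Sol_\mu^0\}$ after extending by $0_{\mu_{A^c}}$. The genuine gap is in the other direction, which you correctly identify as the delicate step but whose sketch does not work. Extending an optimal plan of $\mu_A$ by the identity on $A^c$ is in general \emph{not} optimal for any scale $\lambda_n>0$: for a pair $(x,x+\lambda v)$ with $x\in A$ and a fixed point $(y,y)$ with $y\in A^c$, cyclical monotonicity requires $|x-y|^2+\lambda\left<x-y,v\right>\geqslant 0$, which fails whenever $y$ lies within distance $\lambda|v|$ of $x$ in the direction $-v$; since $\mu\resmes A^c$ may charge every neighbourhood of points of $A$ (density points of $A$ only make this mass relatively small, not zero), a single such pair in the supports already destroys optimality, no matter how small $\lambda_n$ is. Moreover, an ``$o(\lambda_n^2)$ defect of cyclical monotonicity'' does not place the rescaled velocity in $\Tan_\mu^0$: membership in the tangent cone requires $W_\mu$-approximation by \emph{exact} velocities of geodesics, and no quantitative stability of tangency under small monotonicity defects is available at scale $\lambda_n$ --- this is precisely the obstruction the paper points out (no direct extension result for tangent fields could be found, because when the optimal time decreases to $0$ there is no guarantee the limit stays tangent).

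The paper circumvents this differently, and you would need to import that mechanism. Instead of extending (almost-)tangent objects of $\mu_A$, it works on the solenoidal side: by \Cref{res:SolIfOrthCompSupp} it suffices to test $\zeta_A$ against velocities of geodesics from $\mu_A$ to \emph{compactly supported} targets, and for those \Cref{res:extensionOpt} produces an \emph{exactly} optimal extension to $\mu$ --- not by the identity, but by a measurable selection of the subdifferential of the explicit Kantorovich potential built from the plan on $A$ (compact support of the target is essential there; the paper gives a counterexample otherwise). One then concludes with the Chasles relation and the sign information from \Cref{res:centredislocal}, since both fields are centred. So your overall architecture (sections $D$, extension by zero, the easy inclusion) is compatible with the paper's, but the key extension step must be replaced by the compact-target reduction plus the potential-based extension; as written, the density-point/identity-extension argument cannot be completed.
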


\filler{\Cref{res:restr:Sol} relies on the following intermediate results, whose proofs are delayed to the \Cref{sec:appendix}. The statements are formulated for $\Sol_{\mu}$ instead of $\Sol_{\mu}^0$, since the centred character does not intervene there.} 

\begin{lem}\label{res:extensionOpt}
	Let $\mu = (1-\lambda) \mu_1 + \lambda \mu_2 \in \Psp_2(\Sp)$ for $\mu_i \in \Psp_2(\Sp)$ and $\lambda \in [0,1]$. Let $\eta \in \Psp_2(\T\Sp)_{\mu_1}$ be the velocity of a geodesic, with $\nu \coloneqq (\pi_x+\pi_v)_{\#} \eta$ compactly supported. There exists $\gamma \in \Psp_2(\T\Sp)_{\mu_2}$ such that 
	\begin{align}\label{extensionOpt:statement}
		\xi = (1-\lambda) \eta + \lambda \gamma
	\end{align} 
	is the velocity of a geodesic issued from $\mu$.
\end{lem}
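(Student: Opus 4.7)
The plan is to recover an optimal transport plan underlying $\eta$, exhibit a globally Lipschitz convex potential via Rockafellar's theorem thanks to the compactness of $\supp(\nu)$, and extend the plan to all of $\mu$ through a measurable selection of the subdifferential. First I would pass from the velocity to the underlying plan: by the definition of a velocity of a geodesic, $\widetilde{\eta} \coloneqq (\pi_x, \pi_x + \pi_v)_{\#} \eta$ belongs to $\Gamma_o(\mu_1, \nu)$ for the quadratic cost, hence $\supp(\widetilde{\eta})$ is cyclically monotone. Using the explicit Rockafellar construction as a supremum of affine maps whose slopes lie in $\supp(\nu) \subset \overline{\mathscr{B}}(0,R)$, I would obtain a convex function $\varphi : \Sp \to \mathbb{R}$ that is globally $R$-Lipschitz, everywhere finite, whose subdifferential is nonempty at every point, and that satisfies $\supp(\widetilde{\eta}) \subset \partial \varphi$.

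Next I would pick a Borel selection $T : \Sp \to \Sp$ with $T(x) \in \partial \varphi(x)$, which exists by classical selection results for closed-valued measurable multifunctions \cite[Theorem 3]{rockafellarMeasurableDependenceConvex1969}; the Lipschitz bound forces $|T(x)| \leqslant R$, so $T \in L^2_{\mu_2}(\Sp;\Sp)$. Setting $\widetilde{\gamma} \coloneqq (id, T)_{\#} \mu_2$ and $\widetilde{\alpha} \coloneqq (1-\lambda) \widetilde{\eta} + \lambda \widetilde{\gamma}$, the plan $\widetilde{\alpha}$ has first marginal $\mu$, finite second moment, and remains concentrated on $\partial \varphi$, hence on a cyclically monotone set. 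Since concentration on the subdifferential of a proper lower semicontinuous convex function characterizes optimality for the quadratic cost (Knott--Smith), $\widetilde{\alpha}$ is optimal between its marginals. Defining $\gamma \coloneqq (\pi_x, \pi_y - \pi_x)_{\#} \widetilde{\gamma} = (id, T - id)_{\#} \mu_2$ and using linearity of the pushforward yields $\xi = (\pi_x, \pi_y - \pi_x)_{\#} \widetilde{\alpha} = (1-\lambda) \eta + \lambda \gamma$, which is the velocity of the geodesic from $\mu$ induced by $\widetilde{\alpha}$.

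The hard part, and the reason the hypothesis on $\supp(\nu)$ is stated, is ensuring that the potential $\varphi$ is real-valued with nonempty subdifferential at \emph{every} point of $\Sp$, and not only on $\supp(\mu_1)$; without the uniform Lipschitz control coming from the boundedness of $\supp(\nu)$, the Rockafellar potential could be infinite or have empty subdifferential on $\supp(\mu_2) \setminus \supp(\mu_1)$, making the extension to $\mu_2$ impossible. All the other steps (measurable selection, optimality from cyclical monotonicity, linearity of the pushforward) are routine once $\varphi$ has been constructed with these properties.
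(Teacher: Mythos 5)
Your proposal is correct and follows essentially the same route as the paper: build a Rockafellar-type potential from the cyclically monotone support of $(\pi_x,\pi_x+\pi_v)_{\#}\eta$, use the compactness of $\supp\nu$ to make it everywhere finite with nonempty subdifferential, take a measurable selection over $\mu_2$, and conclude optimality of the combined plan from cyclical monotonicity. The only difference is cosmetic: you work with the globally Lipschitz convex potential after reducing the quadratic cost to scalar products (and invoke Knott--Smith), while the paper keeps the squared-distance potential, which is semiconvex and locally Lipschitz, and selects from $\partial_x\bigl(\varphi-|\cdot|^2/2\bigr)$.
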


\filler{In the above result, the measure $\nu$ is compactly supported. This is sharp; take for instance $\mu_1 = \delta_0$, $\mu_2 = \delta_1$ and $\nu$ a Gaussian measure in dimension one. There is only one $\eta$ such that $(\pi_x,\pi_x+\pi_v)_{\#} \eta$ is optimal between $\mu_1$ and $\nu$, and its support is not bounded on the velocity variable. To construct a $\gamma$ satisfying \Cref{extensionOpt:statement}, one should be able to ensure that for any $(x,v) \in \supp \eta$ and $(y,w) \in \supp \gamma$, the monotonicity condition $|v|^2 +|w|^2 \leqslant |x+v-y|^2 + |y+w-x|^2$ holds. Equivalently, using that $x = 0$ and $y = 1$, one should have $2 v - 1 \leqslant 1 + 2 w$; since $v$ is arbitrarily large, such $w$ cannot be finite. In consequence, to be able to use \Cref{res:extensionOpt}, we characterize $\Sol_{\mu}$ by orthogonality with respect to velocities going towards compactly supported measures.}

\begin{lem}\label{res:SolIfOrthCompSupp}
	Let $\zeta \in \Psp_2(\T\Sp)_{\mu}$ satisfy $\left<\zeta,(\pi_x,\pi_y-\pi_x)_{\#} \gamma\right>_{\mu} = 0$ for any $\gamma \in \Gamma_o(\mu,\nu)$ with $\nu \in \Psp_2(\Sp)$ compactly supported. Then $\zeta \in \Sol_{\mu}$. 
\end{lem}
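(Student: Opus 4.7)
The plan is to approximate any velocity of a geodesic $\xi$ by velocities going to compactly supported targets in the $W_\mu$-topology, and then conclude by continuity of $\langle\zeta,\cdot\rangle_\mu$. Since $\Tan_\mu$ is the $W_\mu$-closure of non-negative scalar multiples of such velocities, it suffices to prove $\langle\zeta,\xi\rangle_\mu = 0$ for an arbitrary velocity of a geodesic $\xi = (\pi_x, \pi_y - \pi_x)_\# \gamma$ with $\gamma \in \Gamma_o(\mu,\nu)$.

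Fix $R > 0$ and let $B_R := \mathscr{B}(0,R)$. Truncate the target by setting $\gamma_R := \gamma \resmes (\Sp \times B_R)$; its marginals are $\mu_R^1 := \pi_{x\#}\gamma_R$ and $\nu_R := \pi_{y\#}\gamma_R$ (compactly supported in $B_R$), with total mass $m_R := \gamma_R(\Sp \times \Sp) = \nu(B_R) \to 1$. By the standard restriction property of optimal transport, $\gamma_R$ is optimal between its marginals. Writing $\mu = m_R (\mu_R^1/m_R) + (1-m_R) ((\mu - \mu_R^1)/(1-m_R))$ and applying \Cref{res:extensionOpt} to the velocity $\eta_R := (\pi_x, \pi_y - \pi_x)_\#(\gamma_R/m_R)$, whose target $\nu_R/m_R$ is compactly supported, yields $\delta_R \in \Psp_2(\T\Sp)_{(\mu-\mu_R^1)/(1-m_R)}$ such that
\[
\xi_R := (\pi_x, \pi_y - \pi_x)_\#\gamma_R + (1-m_R)\delta_R
\]
is a velocity of a geodesic issued from $\mu$.

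Pairing the common ``inside'' portion identically and the residual ``outside'' fibres via a product plan yields the estimate
\[
W_\mu^2(\xi,\xi_R) \leq 2\int_{\Sp \times B_R^c} |y-x|^2\, d\gamma + 2(1-m_R)\|\delta_R\|^2_{(\mu-\mu_R^1)/(1-m_R)}.
\]
Granting that $\delta_R$ can be chosen so that (i) $(\pi_x + \pi_v)_\#\delta_R$ is compactly supported and (ii) $(1-m_R)\|\delta_R\|^2 \to 0$, the velocity $\xi_R$ has a compactly supported target, so by hypothesis $\langle\zeta,\xi_R\rangle_\mu = 0$, and both terms in the estimate vanish as $R \to \infty$ (the first by dominated convergence, the second by (ii)). Continuity of the scalar product then gives $\langle\zeta,\xi\rangle_\mu = 0$, and since $\xi$ was arbitrary, $\zeta \in \Sol_\mu$.

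The main obstacle is to secure the two properties (i)--(ii) of the extension $\delta_R$, which are not explicit in the statement of \Cref{res:extensionOpt} but should emerge from its proof: the natural construction transports the residual mass $(\mu-\mu_R^1)/(1-m_R)$ to a bounded landing region chosen compatibly with the cyclical monotonicity of the combined plan, with displacement cost controlled by the portion of $\gamma$ that has been truncated. This is exactly the role played by the compact-support hypothesis in \Cref{res:extensionOpt}: it guarantees enough room to absorb the complementary mass without destroying optimality, while keeping the extension's target and norm under control.
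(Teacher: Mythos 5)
Your overall scheme (approximate an arbitrary velocity of a geodesic in $W_{\mu}$ by velocities towards compactly supported targets, then use continuity of the metric scalar product) is the right one, but as written the proof has a genuine gap exactly where you flag it: properties (i) and (ii) of the extension $\delta_R$ are not consequences of the \emph{statement} of \Cref{res:extensionOpt}, and without them your argument does not close. Without (i), the extended velocity $\xi_R$ is a velocity of a geodesic from $\mu$, but its target $(\pi_x+\pi_v)_{\#}\xi_R$ need not be compactly supported, so the hypothesis of the lemma cannot be invoked to conclude $\left<\zeta,\xi_R\right>_{\mu}=0$ (the whole point of the lemma is that one is \emph{only} allowed compactly supported targets). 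Without (ii), the term $(1-m_R)\|\delta_R\|^2$ in your $W_{\mu}$-estimate is uncontrolled, so $\xi_R$ need not converge to $\xi$. Asserting that these ``should emerge'' from the proof of \Cref{res:extensionOpt} is not a proof; you would have to reopen that construction and check that the measurable selection it produces has velocities $f(x)$ with $x+f(x)$ landing in $\overline{\mathscr{B}}(0,R)$ (giving (i)) and $|f(x)|\leqslant |x|+R$, which together with $R^2\,\nu\left(\mathscr{B}(0,R)^c\right)\leqslant \int_{|y|>R}|y|^2\,d\nu\to 0$ and dominated convergence on $\int_{\Sp\times\mathscr{B}(0,R)^c}|x|^2\,d\gamma$ yields (ii). So the gap is fillable, but it is a real one, and filling it requires precisely the quantitative content your proposal leaves implicit.

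It is also worth noting that once you carry out this filling-in, the argument essentially collapses onto the paper's proof, which avoids the restrict-and-extend detour altogether: there one truncates the Kantorovich potential, $\varphi_R(x)=\sup_{y\in\overline{\mathscr{B}}(0,R)}\varphi^c(y)-|x-y|^2$, observes that its contact set $\Gamma_R$ is cyclically monotone and contains $\supp\eta\cap\left(\Sp\times\overline{\mathscr{B}}(0,R)\right)$, and re-routes the mass of $\eta$ whose target leaves $\overline{\mathscr{B}}(0,R)$ along a measurable selection $f_R$ of $x\mmapsto\left\{y\st (x,y)\in\Gamma_R\right\}$, with $|f_R(x)|\leqslant R$. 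The modified plan $\eta_R$ keeps first marginal exactly $\mu$, is concentrated on $\Gamma_R$ hence optimal, has compactly supported second marginal by construction, and satisfies $W_{\mu}^2\left((\pi_x,\pi_y-\pi_x)_{\#}\eta,(\pi_x,\pi_y-\pi_x)_{\#}\eta_R\right)\leqslant\int_{|y|>R}(R+|y|)^2\,d\nu\to 0$ explicitly. In other words, the normalization into $\mu_R^1/m_R$ and the appeal to \Cref{res:extensionOpt} are avoidable: a direct modification of the plan gives you both the compact support of the target and the $W_{\mu}$-convergence in one stroke, with no hidden quantitative claims.
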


\filler{We now turn to our original claim. In the proof, we use the formula given by \cite[Proposition 4.2]{gigliGeometrySpaceProbability2008} to deduce a Chasles relation for the metric scalar product, stating that for $\mu \in \Psp_2(\Sp)$ and $A \subset \Sp$ measurable, 
\begin{align*}
	\left<\xi,\zeta\right>_{\mu}
	= \int_{x \in \Sp} \left<\xi_x, \zeta_x\right>_{\delta_x} d\mu(x)
	= \int_{x \in A} \left<\xi_x, \zeta_x\right>_{\delta_x} d\mu(x) + \int_{x \in A^c} \left<\xi_x, \zeta_x\right>_{\delta_x} d\mu(x).
\end{align*}
}

\begin{proof}[Proof of \Cref{res:restr:Sol}]
	We proceed by double inclusion. Consider $\zeta \in \Sol_{\mu}^0$, and let $\zeta_A \coloneqq (\zeta \, \resmes \T A) / \mu(A)$ and $\zeta_{A^c} \coloneqq (\zeta \resmes \T A^c) / \mu(A^c)$. By \Cref{res:SolIfOrthCompSupp}, it suffices to show that $\left<\zeta_A, \xi^A\right>_{\mu_A} = 0$ for any $\xi^A \in \Psp_2(\T\Sp)_{\mu_A}$ which induces a geodesic between $\mu_A$ and a compactly supported measure. Consider such a $\xi^A$. Applying \Cref{res:extensionOpt} with $\mu_1 = \mu_A$, $\mu_2 = \mu_{A^c}$ and $1-\lambda = \mu(A)$, we obtain a measure field $\xi^{A^c} \in \Psp_2(\T\Sp)_{\mu_{A^c}}$ such that $\xi = \mu(A) \xi^A + \mu(A^c) \xi^{A^c}$ belongs to $\Tan_{\mu}$. As $\mu_{A}$ and $\mu_{A^c}$ are mutually singular, there holds $\xi^A = (\xi \resmes \T A) / \mu(A)$. Hence
	\begin{align}\label{restr:Sol:Chasles}
		0 
		= \left<\zeta, \xi\right>_{\mu}
		= \mu(A) \left<\zeta_A, \xi^A\right>_{\mu_A} + \mu(A^c) \big<\zeta_{A^c}, \xi^{A^c}\big>_{\mu_{A^c}}
		\geqslant \mu(A) \left<\zeta_A, \xi^A\right>_{\mu_A}
		\geqslant 0,
	\end{align}
	where we used that $\zeta_A$ and $\zeta_{A^c}$ are centred (see \Cref{res:centredislocal}). Hence $\left<\zeta_A,\xi^A\right>_{\mu_A} = 0$, and $\zeta_A \in \Sol_{\mu_A}^0$. 
	
	Conversely, let $\zeta^A \in \Sol_{\mu_A}^0$, and consider $\zeta \coloneqq \mu(A) \zeta^A + \mu(A^c) 0_{\mu_{A^c}}$. In particular, $\zeta^A = (\zeta \resmes \T A)/\mu(A)$. Let $\xi \in \Psp_2(\T\Sp)_{\mu}$ be the velocity of a geodesic. By restriction of optimality \cite[Theorem 4.6]{villaniOptimalTransport2009}, $\xi_A \coloneqq (\xi \resmes \T A) / \mu(A)$ is also the velocity of a geodesic; moreover, by Chasles,
	\begin{align*}
		\left<\zeta,\xi\right>_{\mu}
		= \mu(A) \left<\zeta^A, \xi_A\right>_{\mu_A} + \mu(A^c) \left<0_{\mu_{A^c}}, \xi_{A^c}\right>_{\mu_{A^c}}
		= \mu(A) \left<\zeta^A, \xi_A\right>_{\mu_A}
		= 0.
	\end{align*}
	Hence $\zeta \in \Sol_{\mu}^0$, and any solenoidal measure field in $\Sol_{\mu_A}^0$ writes as the restriction of an element of $\Sol_{\mu}^0$.	
\end{proof}

\filler{As a corollary, we deduce the corresponding statement on the centred tangent cone.}

\begin{cor}[Restriction of centred tangent measure fields]\label{res:restr:Tan}
	With the same notations as in \Cref{res:restr:Sol}, 
	\begin{align*}
		\Tan_{\mu_A}^0 = \left\{ (\xi \resmes \T A) / \mu(A) \st \xi \in \Tan_{\mu}^0 \right\}.
	\end{align*}
\end{cor}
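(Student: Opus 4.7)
The plan is to mirror the proof of \Cref{res:restr:Sol} but use the duality between the two cones, rather than reproving everything from scratch. The key identity is $\Tan_{\mu}^0 = (\Sol_{\mu}^0)^{\perp}$ (in the sense of \Cref{res:orthGamma}), which follows from \Cref{res:biorth} applied to $\Set_{\mu}^0 = \Tan_{\mu}^0$ together with the decomposition \Cref{barycentreddecomp:splitps} identifying $(\Tan_{\mu}^0)^{\perp}$ with $\Sol_{\mu}^0$. Combined with \Cref{res:restr:Sol}, which describes how $\Sol_{\mu_A}^0$ is obtained from $\Sol_{\mu}^0$, and the Chasles-type splitting
\[
	\left<\xi,\zeta\right>_{\mu} = \int_{A} \left<\xi_x,\zeta_x\right>_{\delta_x} d\mu(x) + \int_{A^c} \left<\xi_x,\zeta_x\right>_{\delta_x} d\mu(x)
\]
coming from \cite[Proposition 4.2]{gigliGeometrySpaceProbability2008}, both inclusions become short.

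For the inclusion $\supset$, pick $\xi \in \Tan_{\mu}^0$ and set $\xi_A \coloneqq (\xi \resmes \T A)/\mu(A)$. Its first marginal is $\mu_A$ and, because centering is a fibre-by-fibre property, $\xi_A$ stays centred. For any $\zeta_A \in \Sol_{\mu_A}^0$, \Cref{res:restr:Sol} provides $\zeta \in \Sol_{\mu}^0$ with $\zeta_A = (\zeta \resmes \T A)/\mu(A)$. Since $\left<\xi,\zeta\right>_{\mu} = 0$ and $\left<\xi_x,\zeta_x\right>_{\delta_x} \geqslant 0$ $\mu$-almost everywhere by \Cref{res:centredislocal}, this nonnegative integrand must vanish $\mu$-a.e., hence in particular
\[
	\mu(A) \left<\xi_A,\zeta_A\right>_{\mu_A} = \int_A \left<\xi_x,\zeta_x\right>_{\delta_x} d\mu(x) = 0.
\]
Thus $\xi_A \in (\Sol_{\mu_A}^0)^{\perp} = \Tan_{\mu_A}^0$.

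For the inclusion $\subset$, given $\xi_A \in \Tan_{\mu_A}^0$, define the extension by the zero field off $A$:
\[
	\xi \coloneqq \mu(A)\, \xi_A + \mu(A^c)\, (id,0)_{\#} \mu_{A^c} \in \Psp_2(\T\Sp)_{\mu}^0.
\]
For any $\zeta \in \Sol_{\mu}^0$, the restriction $\zeta_A \coloneqq (\zeta \resmes \T A)/\mu(A)$ lies in $\Sol_{\mu_A}^0$ by \Cref{res:restr:Sol}; the Chasles identity and the fact that $\xi_x = \delta_0$ on $A^c$ (making $\left<\xi_x,\zeta_x\right>_{\delta_x} = 0$ there) yield $\left<\xi,\zeta\right>_{\mu} = \mu(A) \left<\xi_A,\zeta_A\right>_{\mu_A} = 0$. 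Hence $\xi \in (\Sol_{\mu}^0)^{\perp} = \Tan_{\mu}^0$, and manifestly $(\xi \resmes \T A)/\mu(A) = \xi_A$, closing the proof.

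There is no serious obstacle: the only subtlety is the same one encountered in \Cref{res:restr:Sol}, namely that the Chasles splitting gives a sum of two nonnegative terms whose vanishing must be forced individually; this is supplied once again by \Cref{res:centredislocal}. The use of $(id,0)_{\#} \mu_{A^c}$ as a neutral extension works precisely because the zero measure field is both centred and annihilates any scalar product at each fibre.
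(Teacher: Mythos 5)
Your proof is correct and follows essentially the same route as the paper: both directions proceed by restricting/extending with the zero field on $A^c$, invoking \Cref{res:restr:Sol} to move between $\Sol_{\mu}^0$ and $\Sol_{\mu_A}^0$, using the Chasles splitting together with the fibrewise nonnegativity of \Cref{res:centredislocal}, and concluding via the biorthogonality $\Tan^0 = (\Sol^0)^{\perp}$ (which the paper uses implicitly and you state explicitly).
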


\begin{proof}
	Let first $\xi \in \Tan_{\mu}^0$, and denote $\xi_A \coloneqq (\xi \resmes \T A) / \mu(A)$. By \Cref{res:restr:Sol}, any $\zeta^A \in \Sol_{\mu_A}^0$ writes as $(\zeta \resmes \T A) / \mu(A)$ for some $\zeta \in \Sol_{\mu}^0$. Hence, using Chasles as in \Cref{restr:Sol:Chasles}, $0 = \left<\xi,\zeta\right>_{\mu} \geqslant \mu(A) \left<\xi_A, \zeta^A\right>_{\mu_A} \geqslant 0$, so that $\xi_A \in \Tan_{\mu_A}^0$. Conversely, if $\xi^A \in \Tan_{\mu_A}^0$, define $\xi \coloneqq \mu(A) \xi^A + \mu(A^c) 0_{\mu_{A^c}} \in \Psp_2(\T\Sp)_{\mu}^0$. For any $\zeta \in \Sol_{\mu}^0$, one has $\left<\xi,\zeta\right>_{\mu} = \mu(A) \left<\xi^A,\zeta_A\right>_{\mu_A}$, where $\zeta_A \coloneqq (\zeta \resmes \T A) / \mu(A)$ belongs to $\Sol_{\mu_A}^0$ by \Cref{res:restr:Sol}. Hence $\left<\xi,\zeta\right>_{\mu} = 0$, and $\xi \in \Tan_{\mu}^0$, completing the proof.
\end{proof}

\filler{Here we highlight that the argument is not exactly symmetric between $\Tan_{\mu}^0$ and $\Sol_{\mu}^0$; the difficulty lies in \Cref{res:extensionOpt}, where an \emph{optimal} plan attached to a measure is ``extended'' to an optimal plan attached to another measure. Despite many attempts, the author could not find a direct proof of an extension result for \emph{tangent} measure fields: when letting the optimal time decrease to 0, there is no guarantee that the narrow/Wasserstein limit stays tangent. However, optimal plans are sufficient to characterize solenoidal measure fields, so we can first prove the restriction on $\Sol_{\mu}^0$, then mirror it on $\Tan_{\mu}^0$.}

\begin{rem}[Necessity of the centred assumption]\label{rem:centredNecessary}
	Consider $\mu \in \Psp_2(\Sp)$ the 1-Hausdorff measure restricted to the unit square $S \coloneqq \partial [0,1]^2$. Parametrize $S$ by a constant-speed closed curve $\gamma : [0,1] \to \mathbb{R}^2$ rotating clockwise, and let $\zeta \coloneqq (\gamma,\dot{\gamma})_{\#} \mathcal{L}_{[0,1]}$. Then $\zeta$ is solenoidal; since it is induced by a map, this is equivalent to $\left<\xi,\zeta\right>_{\mu} = 0$ for any map-induced tangent $\xi$. Any such $\xi$ can be approximated arbitrarily well with respect to $W_{\mu}$ by $(id,\nabla \varphi)_{\#} \mu$ for some $\varphi \in \mathcal{C}^{\infty}_c(\Sp;\mathbb{R})$ \cite{gigliInverseImplicationBrenierMcCann2011}. As $\left<(id,\nabla \varphi)_{\#} \mu, \zeta\right>_{\mu} = \int_{[0,1]} \frac{d}{dt} \varphi \circ \gamma \, dt = 0$, the measure field $\zeta$ is solenoidal. 
	
	However, if $\nu \coloneqq 4 \, \mu \resmes [0,1] \times \{1\}$ is the (normalized) restriction of $\mu$ to the top side of the square, then the corresponding restriction $\varsigma \coloneqq (id,(1,0))_{\#} \nu$ belongs to $\Tan_{\nu}$, since it induces a geodesic. The reader may check that the centred fields $\frac{1}{2} \left[\zeta + (\pi_x,-\pi_v)_{\#} \zeta\right]$ and $\frac{1}{2} \left[\varsigma + (\pi_x,-\pi_v)_{\#} \varsigma\right]$ are both solenoidal.
\end{rem}

%%%%%%%%%%%%%%%%%%%%%%%%%%%%%%%%%%%%%%%%%%%%%%%%%%%%%%%%%%%%
\subsection{Decomposition according to the dimension of splitting}\label{sec:decomp}
%%%%%%%%%%%%%%%%%%%%%%%%%%%%%%%%%%%%%%%%%%%%%%%%%%%%%%%%%%%%

\filler{With the above material, we can now state and prove the first main result of the paper.}

\begin{theorem}\label{res:decomp}
	Let $\mu \in \Psp_2(\Sp)$. There exists a decomposition $\mu = \sum_{k=0}^d m_k \mu^k$, where $m_k \in [0,1]$ sum to one and $\mu^k \in \Psp_2(\Sp)$ are mutually singular measures, with the following properties. For each $k \in \llbracket 0,d \rrbracket$, there exists a Grassmannian section $D_k : \Sp \rightrightarrows \mathbb{R}^d$ such that $\dim D_k \equiv k$, and 
	\begin{enumerate}[label={\roman*)}]
	\item\label{item:decomp:sol:sum} $\zeta \in \Sol_{\mu}^0$ if and only if $\zeta = \sum_{k=0}^d m_k \zeta^k$ with $\zeta^k \in \Sol_{\mu^k}^0$ for $k \in \llbracket 0,d\rrbracket$. 
	\item\label{item:decomp:tan:sum} $\xi \in \Tan_{\mu}^0$ if and only if $\xi = \sum_{k=0}^d m_k \xi^k$ with $\xi^k \in \Tan_{\mu^k}^0$ for $k \in \llbracket 0,d\rrbracket$.
	\item\label{item:decomp:sol:con} If $m_k > 0$, $\zeta^k \in \Sol_{\mu^k}^0$ if and only if $\zeta^k$ is centred and concentrated on $\graph D_k$.
	\item\label{item:decomp:tan:con} If $m_k > 0$, $\xi^k \in \Tan_{\mu^k}^0$ if and only if $\xi^k$ is centred and concentrated on $\graph D_k^{\perp}$.
	\end{enumerate}
\end{theorem}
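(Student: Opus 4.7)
The idea is to derive this theorem by combining the characterization of $\Sol_{\mu}^0$ and $\Tan_{\mu}^0$ from \Cref{res:charTanSol0} with the restriction stability results \Cref{res:restr:Sol,res:restr:Tan}. First I would apply \Cref{res:charTanSol0} to $\mu$ itself, obtaining a Grassmannian section $D : \Sp \rightrightarrows \mathbb{R}^d$ such that $\Sol_{\mu}^0$ (resp. $\Tan_{\mu}^0$) consists of centred measure fields concentrated on $\graph D$ (resp. $\graph D^{\perp}$). Since $D$ is a measurable multivalued map, the integer-valued function $x \mapsto \dim D(x)$ is measurable: using a Castaing representation $D(x) = \vspan\{u_1(x),\dots,u_d(x)\}$, the dimension equals the rank of the matrix $(u_1(x)\mid\dots\mid u_d(x))$, whose rank level-sets are Borel.

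Next I would define the measurable partition $A_k := \{x \in \Sp \st \dim D(x) = k\}$ for $k \in \{0,\dots,d\}$, set $m_k := \mu(A_k)$ (so $\sum m_k = 1$) and, for $k$ with $m_k > 0$, $\mu^k := (\mu \resmes A_k)/m_k$; the $\mu^k$ are automatically mutually singular. Define $D_k : \Sp \rightrightarrows \mathbb{R}^d$ by setting $D_k(x) := D(x)$ for $x \in A_k$ and $D_k(x) := \vspan\{e_1,\dots,e_k\}$ otherwise, so that $\dim D_k \equiv k$ and $D_k = D$ holds $\mu^k$-almost everywhere. Statements iii) and iv) then reduce to showing that a centred $\zeta^k \in \Psp_2(\T\Sp)_{\mu^k}^0$ belongs to $\Sol_{\mu^k}^0$ iff it is concentrated on $\graph D$ (and the same with $D^\perp$ for $\Tan_{\mu^k}^0$). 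The forward implication follows from \Cref{res:restr:Sol}: writing $\zeta^k = (\zeta \resmes \T A_k)/m_k$ for some $\zeta \in \Sol_{\mu}^0$, the latter is concentrated on $\graph D$ by \Cref{res:charTanSol0}, so the restriction inherits the concentration. Conversely, given such $\zeta^k$, extending it by the zero velocity field $(\mathrm{id},0)_{\#} (\mu \resmes A_k^c)$ produces a centred measure field on $\mu$ still concentrated on $\graph D$ (since $0 \in D(x)$ everywhere), hence in $\Sol_{\mu}^0$ by \Cref{res:charTanSol0}; \Cref{res:restr:Sol} then yields $\zeta^k \in \Sol_{\mu^k}^0$. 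The argument for $\Tan_{\mu^k}^0$ is identical after replacing $D$ by $D^{\perp}$ and invoking \Cref{res:restr:Tan}.

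For i) and ii), mutual singularity of the $\mu^k$ gives a canonical decomposition $\zeta = \sum_k m_k \zeta^k$ of any $\zeta \in \Psp_2(\T\Sp)_{\mu}$, with $\zeta^k := (\zeta \resmes \T A_k)/m_k \in \Psp_2(\T\Sp)_{\mu^k}$. If $\zeta \in \Sol_{\mu}^0$, then \Cref{res:restr:Sol} directly gives $\zeta^k \in \Sol_{\mu^k}^0$; conversely, if $\zeta^k \in \Sol_{\mu^k}^0$ for each $k$ with $m_k > 0$, the assembled field $\zeta$ is centred and (by iii), which was just established) concentrated on $\graph D$, hence in $\Sol_{\mu}^0$ by \Cref{res:charTanSol0}. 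Statement ii) follows mutatis mutandis via \Cref{res:restr:Tan} and $D^{\perp}$ in place of $D$. I expect the main work to be essentially bookkeeping: establishing the measurability of the partition $\{A_k\}$ and verifying that the concentration property glues correctly across mutually singular marginals; no new conceptual ingredient is needed beyond the structural results \Cref{res:charTanSol0,res:restr:Sol,res:restr:Tan}.
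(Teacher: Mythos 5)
Your proposal is correct and follows essentially the same route as the paper: the same partition $A_k = \{\dim D(x)=k\}$ from the section $D$ of \Cref{res:charTanSol0}, the same $D_k$, and the same use of \Cref{res:restr:Sol,res:restr:Tan} together with zero-velocity extensions. The only (harmless) difference is the ordering — you establish iii)/iv) first and deduce the converse of i)/ii) from the concentration characterization, whereas the paper proves i)/ii) first via the Chasles relation and orthogonality and then derives iii)/iv); both arguments rest on exactly the same lemmas.
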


\filler{In addition, the measures $m_k \mu^k$ in the decomposition are unique. This directly follows from the explicit formula given in \Cref{res:charmuk}. \Cref{fig:decomp} provides a visual intuition supporting \Cref{res:decomp}.}

\begin{figure}[H]
	\centering
	\includegraphics{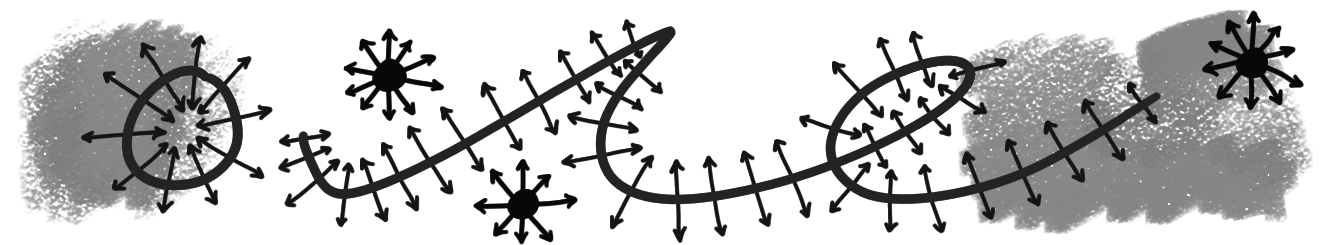}
	\caption{A measure $\mu \in \Psp_2(\mathbb{R}^2)$ and its Grassmannian section $D^{\Tan} = D^{\perp}$ characterizing $\Tan_{\mu}^0$.}\label{fig:decomp}
	\mysubcaption[0.94]{The measure $\mu^0$ is the sum of the three atoms, and $D^{\Tan}_0$ is $2-$dimensional. The measure $\mu^1$ is supported on a countable union of \DC{1} sets, which, in dimension 2, are graphs of DC functions up to permuting the axes. The direction of the one-dimensional Grassmannian section $D^{\Tan}_1$ is represented by the arrows, whose norm is irrelevant. The fact that $D^{\Tan}_1$ is orthogonal to the set over which $\mu^1$ is concentrated will be proved in \Cref{res:PeqD}. The remaining measure $\mu^2$ is transport-regular in the sense that it gives 0 mass to any \DC{1} set, and satisfies the conclusion of the Brenier-McCann theorem. In consequence, no optimal plan splits mass, and $D^{\Tan}_2$ is reduced to $\{0\}$.}
\end{figure}

\begin{proof}[Proof of \Cref{res:decomp}]
	By \Cref{res:charTanSol0}, there exist a Grassmannian section $D : \Sp \rightrightarrows \mathbb{R}^d$ such that $\zeta \in \Sol_{\mu}^0$ if and only if $\zeta$ is centred and concentrated on $\graph D$, and $\xi \in \Tan_{\mu}^0$ if and only if $\xi \in \Psp_2(\T\Sp)_{\mu}^0$ and $\xi$ is concentrated on $\graph D^{\perp}$. For each $k \in \llbracket 0,d\rrbracket$, define 
	\begin{align*}
		A_k \coloneqq \left\{ x \in \Sp \st \dim D(x) = k \right\}.
	\end{align*}
	Each set $A_k$ is measurable; indeed, by \cite[Theorem 3.(d)]{rockafellarMeasurableDependenceConvex1969}, there holds $D(x) = \overline{\conv \left\{ g_n(x) \st n \in \mathbb{N} \right\}}$ for some countable family $(g_n)_{n \in \mathbb{N}}$ of measurable applications. Then $A_k$ writes as the set of $x$ such that any choice of $k+1$ vectors $g_n(x)$ is linked, and there exists $k$ independent vectors $(g_{n_j}(x))_{j \in \llbracket 1,k\rrbracket}$, i.e. 
	\begin{align*}
		A_k = 
		\bigcap_{\sigma : \llbracket1,k+1\rrbracket \to \mathbb{N}} \left\{ {\det}_{k+1}\left(g_{\sigma_1}(x),\cdots,g_{\sigma(k+1)}(x)\right) = 0 \right\}
		\bigcap \bigcup_{\theta : \llbracket 1,k\rrbracket \to \mathbb{N}} \left\{ {\det}_{k}\left(g_{\theta(1)}(x),\cdots,g_{\theta(k)}(x)\right) \neq 0 \right\}.
	\end{align*}
	Here ${\det}_j : (\mathbb{R}^d)^j \to \mathbb{R}$ is the $j-$determinant, which is continuous. As each $g_n$ is measurable, so is $A_k$.
	
	If $m_k \coloneqq \mu(A_k) > 0$, define $\mu^k \coloneqq (\mu \resmes A^k) / \mu(A_k)$. Since $(A_k)_{k=0}^d$ is a partition of $\Sp$, the measures $\mu^k$ are mutually singular, and $\mu = \sum_{k=0}^d m_k \mu^k$. Let 
	\begin{align*}
		D_k(x) \coloneqq 
		\begin{cases}
			D(x) & x \in A_k, \\
			\vect \{e_1,\cdots,e_k\} & \text{otherwise}.
		\end{cases}
	\end{align*}
	Clearly, $D_k$ is measurable.
	We now show that the measures $\mu^k$ and the applications $D_k$ satisfy the claims. 
	
	Let $\zeta \in \Sol_{\mu}^0$, and write it as $\sum_{k=0}^d m_k \zeta^k$, where $m_k \zeta^k \coloneqq \zeta \, \resmes \T A_k$. In particular, $\zeta^k$ is centred. For each $k$ such that $m_k > 0$, \Cref{res:restr:Sol} yields that $\zeta^k \in \Sol_{\mu^k}^0$. This proves the first implication of point \Cref{item:decomp:sol:sum}.
	Conversely, let $(\zeta^k)_{k \in \llbracket 0,d \rrbracket}$ be a family of centred measure fields such that $\zeta^k \in \Sol_{\mu^k}^0$, and define $\zeta \coloneqq \sum_{k=0}^d m_k \zeta^k$. To show that $\zeta \in \Sol_{\mu}^0$, let $\xi \in \Tan_{\mu}^0$, decomposed as $\sum_{k=0}^d m_k \xi^k$. By Chasles, 
	\begin{align*}
		\left<\zeta,\xi\right>_{\mu}
		= \sum_{k = 0}^d m_k \left<\zeta^k, \xi^k\right>_{\mu^k}.
	\end{align*}
	By \Cref{res:restr:Tan}, each $\xi^k$ belongs to $\Tan_{\mu^k}^0$, so that every term of the sum is 0. Hence $\zeta \in (\Tan_{\mu}^0)^{\perp 0} = \Sol_{\mu}^0$. This proves \Cref{item:decomp:sol:sum}; the argument is completely symmetric for \Cref{item:decomp:tan:sum}, with \Cref{res:restr:Sol} in place of \Cref{res:restr:Tan}.
	
	We turn to point \Cref{item:decomp:sol:con}. If $\zeta^k \in \Sol_{\mu^k}^0$, then, by \Cref{item:decomp:sol:sum}, the measure field $\zeta \coloneqq m_k \zeta^k + \sum_{j=0, j \neq k}^d m_j 0_{\mu^j}$ belongs to $\Sol_{\mu}^0$. As such, it is concentrated on $\graph D$. Therefore so is $\zeta^k$, and since $D = D_k$ for $\mu^k-$almost every $x$, we get that $\zeta^k(\graph D_k) = 1$.
	On the other hand, let $\zeta^k \in \Psp_2(\T\Sp)_{\mu^k}^0$ be concentrated on $\graph D_k$. As $D_k = D$ $\mu^k-$almost everywhere, $\zeta^k$ is concentrated on $\graph D$. Therefore $\zeta \coloneqq m_k \zeta^k + \sum_{j=0, j \neq k}^d m_j 0_{\mu^j}$ is also concentrated on $\graph D$, and must belong to $\Sol_{\mu}^0$. By restriction, $\zeta^k = (\zeta \resmes \T A_k)/m_k$ belongs to $\Sol_{\mu^k}^0$. 
	The point \Cref{item:decomp:tan:con} is proved by repeating the argument with $D^{\perp}$ in place of $D$. 
\end{proof}

%%%%%%%%%%%%%%%%%%%%%%%%%%%%%%%%%%%%%%%%%%%%%%%%%%%%%%%%%%%%%%%%%%%%%%%%%%%%%%%%%%%
%%%%%%%%%%%%%%%%%%%%%%%%%%%%%%%%%%%%%%%%%%%%%%%%%%%%%%%%%%%%%%%%%%%%%%%%%%%%%%%%%%%
\section{Properties of the decomposition}
%%%%%%%%%%%%%%%%%%%%%%%%%%%%%%%%%%%%%%%%%%%%%%%%%%%%%%%%%%%%%%%%%%%%%%%%%%%%%%%%%%%
%%%%%%%%%%%%%%%%%%%%%%%%%%%%%%%%%%%%%%%%%%%%%%%%%%%%%%%%%%%%%%%%%%%%%%%%%%%%%%%%%%%

\filler{\Cref{res:decomp} only relies on the algebraic properties of tangent and solenoidal cones. We now give a refined description of the measures $\mu^k$ in terms of concentration on particular subsets. The argument is based on \Zaitchek's theorem, recalled below, which characterizes non-differentiability points of convex functions using \DC{k} sets. We start by some lemmas specific to \DC{k} and \sDC{k} sets, then return to the measures $(\mu^k)_k$.}

\renewcommand{\Sp}{\mathbb{R}^d}

%%%%%%%%%%%%%%%%%%%%%%%%%%%%%%%%%%%%%%%%%%%%%%%%%%%%%%%%%%%%
\subsection{Preliminaries on \DC{k} sets}
%%%%%%%%%%%%%%%%%%%%%%%%%%%%%%%%%%%%%%%%%%%%%%%%%%%%%%%%%%%%

\begin{definition}[\DC{k} and \sDC{k} sets]\label{def:sDCk}
	Let $k \in \llbracket0,d\rrbracket$.
	A set $A \subset \Sp$ is a \emph{difference of convex functions of dimension $k$}, denoted \DC{k}, if $A = \Phi(\mathbb{R}^k)$ for some function $\Phi : \mathbb{R}^k \to \mathbb{R}^d$ which, up to a permutation of coordinates, can be written as 
	\begin{align}\label{def:sDCk:statement}
		\Phi(X_k) = (X_k,(\varphi_{k+1}-\psi_{k+1})(X_k),\cdots,(\varphi_{d}-\psi_d)(X_k))
	\end{align}
	for $\varphi_j,\psi_j : \mathbb{R}^k \to \mathbb{R}$ convex functions and $j \in \llbracket k+1,d\rrbracket$. 
	A set that can be covered by countably many \DC{k} sets will be said \sDC{k}. 
\end{definition}

\DC{k} sets are called ``c--c hypersurfaces of dimension $k$'' in the original work of \Zaitchek \cite{zajicekDifferentiationConvexFunctions1979}, and ``$\delta-$convex surfaces of dimension $k$'' by \ncite{pavlicaPointsNondifferentiabilityConvex2004}. 
By convention, \DC{0} sets are points, and the only \DC{d} set is $\mathbb{R}^d$. 

\begin{rem}[Composition]\label{rem:compsDC}
	Let $0 \leqslant j \leqslant k \leqslant d$. 
	If $A \subset \mathbb{R}^k$ is \sDC{j}, and $\Phi : \mathbb{R}^k \to \mathbb{R}^d$ writes as a permutation of $(id_{\mathbb{R}^k},\varphi_{k+1}-\psi_{k+1},\cdots,\varphi_d-\psi_d)$ for convex functions $\varphi_j,\psi_j : \mathbb{R}^k \to \mathbb{R}$, then $\Phi(A)$ is \sDC{j} as well. Indeed, this is trivial if $j=0$ (since $A$ is countable) or $j=k$ (since $A = \mathbb{R}^k$). Otherwise, let $(A_n)_n$ be a countable family of \DC{j} sets covering $A$, each written as $\Phi_n(\mathbb{R}^j)$ for some $\Phi_n$ of the form \Cref{def:sDCk:statement}. The composition $\Phi \circ \Phi_n$ coincides with the identity on $j$ of its coordinates. Since compositions of DC functions are still DC by \cite[(I)~and~(II)]{hartmanFunctionsRepresentableDifference1959}, the remaining coordinates of $\Phi \circ \Phi_n$ are DC functions from $\mathbb{R}^j$ to $\mathbb{R}$. Therefore the sets $(\Phi \circ \Phi_n(A_n))_{n \in \mathbb{N}}$ are a countable family of \DC{j} sets covering $A$, i.e. $A$ is \sDC{j}.
\end{rem}

\begin{theorem*}[\Zaitchek's theorem]\label{res:zaitchek}
	Let $\varphi : \Sp \to \mathbb{R}$ be a convex function. Then each set 
	\begin{align}\label{def:Jk}
		J_k(\varphi) \coloneqq \left\{ x \in \Sp \st \dim \partial_x \varphi \geqslant d-k \right\}
	\end{align}
	is \sDC{k}, i.e. can be covered by countably many \DC{k} sets. Conversely, if $A \subset \Sp$ is \sDC{k}, there exists $\varphi : \Sp \to \mathbb{R}$ convex such that $A \subset J_k(\varphi)$. 
\end{theorem*}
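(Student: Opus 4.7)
The plan is to handle the two directions separately; the converse is an explicit construction, while the forward direction is the substantive part.

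For the converse direction, by $\sigma$-subadditivity it suffices to produce, for a single \DC{k} set $A = \Phi(\mathbb{R}^k)$ written as in \Cref{def:sDCk:statement}, a convex function $u : \Sp \to \mathbb{R}$ with $A \subset J_k(u)$; countable unions can then be handled by summing weighted copies. Writing $f_j = \varphi_j - \psi_j$, for each $j \in \llbracket k+1, d \rrbracket$ and any $M \geqslant 1$ define
\[
	G_j(x', t) \coloneqq \max_{\lambda \in [-1, 1]} \bigl[\lambda t + (M - \lambda) \varphi_j(x') + (M + \lambda) \psi_j(x')\bigr],
\]
which is convex in $(x', t)$ since each function inside is linear in $t$ and a positive combination of convex functions in $x'$. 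Factoring the bracket as $\lambda(t - f_j(x')) + M(\varphi_j + \psi_j)(x')$, one sees that for $t = f_j(x')$ the expression is independent of $\lambda$, so the supremum is attained on the entire interval $[-1, 1]$; by the subdifferential-of-max formula, $\partial G_j(x', f_j(x'))$ contains a one-dimensional affine segment parametrized by $\lambda \in [-1, 1]$ whose $t$-coordinate runs over $[-1, 1]$. Setting $u(x_1, \ldots, x_d) \coloneqq \sum_{j=k+1}^d G_j(x_1, \ldots, x_k, x_j)$, the subdifferential $\partial u(x)$ at any $x \in A$ inherits a rank-$(d-k)$ affine embedding of $[-1,1]^{d-k}$ coming from the $d-k$ independent multipliers, hence $\dim \partial u(x) \geqslant d-k$ and $A \subset J_k(u)$.

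For the forward direction, I would stratify
\[
	J_k(\varphi) \subset \bigcup_{V, \epsilon} E_{V, \epsilon},
	\qquad
	E_{V, \epsilon} \coloneqq \bigl\{\, x \in \Sp \st \partial_x \varphi \supset y + \epsilon \conv\{\pm v_{k+1}, \ldots, \pm v_d\} \text{ for some } y \in \mathbb{R}^d \,\bigr\},
\]
where $(v_{k+1}, \ldots, v_d)$ ranges over a countable dense set of orthonormal $(d-k)$-tuples and $\epsilon$ over positive rationals. By rotation, take $v_j = e_j$. For $x \in E_{V, \epsilon}$, projecting the cube onto each axis forces $\varphi'(x; e_j) + \varphi'(x; -e_j) \geqslant 2\epsilon$ for $j > k$, meaning the 1D-convex function $t \mapsto \varphi(\ldots, t, \ldots)$ in the $j$-th slot has a kink of size $\geqslant 2\epsilon$ at $t = x_j$. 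Such kinks are locally finite in $t$, so the ``$j$-th kink locus'' projects to a $\sigma$-locally-finite family of graphs over the hyperplane $e_j^{\perp}$; exploiting that these graphing functions solve implicit equations involving monotone right-derivatives of $\varphi$, one shows each graph to be \sDC{d-1}. The $\epsilon$-amplitude then provides a quantitative transversality letting the $d-k$ simultaneous graph conditions be solved by DC implicit functions of the first $k$ coordinates, yielding $E_{V, \epsilon}$ as \sDC{k}.

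The main obstacle is the last step of the forward direction: upgrading the intersection of $d-k$ DC-hypersurfaces to a \sDC{k} set. This requires a DC implicit function theorem with quantitative bounds in terms of $\epsilon$, the technical heart of \Zaitchek's original argument, together with careful tracking of how the componentwise DC structure of the graphing functions survives projection onto the $k$-dimensional base---nontrivial already for $d = 2, k = 1$.
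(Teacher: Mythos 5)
This statement is not proved in the paper at all: it is quoted from \Zaitchek's article \cite{zajicekDifferentiationConvexFunctions1979}, and the only piece the paper re-proves is the single-set converse, namely \Cref{res:particularphi}. Your converse direction is correct and is essentially that construction: your $G_j$ is a maximum over the continuum $\lambda\in[-1,1]$ where the paper's $\phi_j$ takes the maximum of the two extreme functions, and the passage to countable unions by a weighted sum $\sum_n c_n u_n$ is the standard argument (it does require choosing the weights so that both the series and a series of selected subgradients converge locally, which you only gesture at, but that is routine).

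The forward direction, which is the substantive half of the theorem, has a genuine gap, and already its first step fails as stated. If $\dim\partial_x\varphi$ equals exactly $d-k$, the affine hull of $\partial_x\varphi$ is a single $(d-k)$-dimensional plane, and a translate of $\varepsilon\conv\{\pm v_{k+1},\dots,\pm v_d\}$ fits inside $\partial_x\varphi$ only if $\vspan\{v_{k+1},\dots,v_d\}$ coincides exactly with the direction of that plane; a countable family of tuples produces only countably many spans, so generic planes are missed. Concretely, for $d=2$, $k=1$ and $\varphi(x)=\left|\left<x,w\right>\right|$ with $w$ a unit vector outside your countable set, one has $J_1(\varphi)=w^{\perp}$ and $\partial_x\varphi=[-w,w]$ there, which contains no nondegenerate segment parallel to any chosen $v$, so $J_1(\varphi)\not\subset\bigcup_{V,\varepsilon}E_{V,\varepsilon}$. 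The condition must be relaxed to one that is stable under perturbing the directions, e.g. ``for each $j$ there exist $p,q\in\partial_x\varphi$ with $\left<p-q,v_j\right>\geqslant\varepsilon$'', which still yields the kink estimate you use next. Even granting such a corrected stratification, the actual content of the theorem is missing: the assertion that each ``kink locus'' is \sDC{d-1} is essentially the case $k=d-1$ of the statement itself (its proof via monotonicity of $\partial\varphi$ and an explicit DC representation of the graphing functions is the real work), and the combination of the $d-k$ graph conditions into a \sDC{k} set via a ``quantitative DC implicit function theorem'' is exactly the step you acknowledge not proving. Finally, ``by rotation, take $v_j=e_j$'' is not innocent here: \Cref{def:sDCk} defines \DC{k} sets through permutations of the coordinate axes, and invariance of the \sDC{k} class under rotations is a consequence of \Zaitchek's theorem rather than an a priori fact available inside its proof. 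So the converse stands, but the forward implication remains unproven.
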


\filler{If one is given a single \DC{k} set $A$, then the convex function can be chosen ``uniformly non-differentiable'' over $A$. This is merely an observation on the construction of \cite{zajicekDifferentiationConvexFunctions1979}, but will be used in the sequel.}

\begin{lem}\label{res:particularphi}
	Let $A = \Phi(\mathbb{R}^k) \subset \mathbb{R}^d$ be a \DC{k} set, with $\Phi$ as in \Cref{def:sDCk}. There exists a convex function $\phi : \mathbb{R}^d \to \mathbb{R}$ and $d-k+1$ measurable vector fields $f_k,\cdots,f_{d}$ such that $f_j(x) \in \partial_x \phi$ for each $x$, and whenever $x \in A$, the vectors $f_k(x),\cdots,f_d(x)$ are at distance at least one from each other.
\end{lem}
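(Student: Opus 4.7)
The plan is to construct $\phi$ as a sum $\phi = \sum_{j=k+1}^d \phi_j$ of convex functions on $\mathbb{R}^d$, one for each ``thin'' direction of $A$, with each $\phi_j$ having a ridge on $\{x_j = h_j(X_k)\}$ where $h_j := \varphi_j - \psi_j$. Concretely I would set
\begin{align*}
	F_j^+(x) := x_j + 2 \psi_j(X_k), \qquad F_j^-(x) := -x_j + 2 \varphi_j(X_k), \qquad \phi_j(x) := \max\bigl(F_j^+(x),\ F_j^-(x)\bigr),
\end{align*}
where $X_k$ denotes the first $k$ coordinates of $x$ (with the permutation convention of \Cref{def:sDCk}). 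Each $F_j^{\pm}$ is the sum of a linear function and a convex function of $X_k$, hence convex on $\mathbb{R}^d$, so $\phi_j$ is convex; and the two pieces coincide precisely when $x_j = \varphi_j(X_k) - \psi_j(X_k) = h_j(X_k)$, i.e., on the graph of $h_j$. Thus at every $x^\ast = \Phi(X_k^\ast) \in A$, both affine pieces are active in every $\phi_j$, and a direct computation of the subdifferential shows that any element of $\partial_{x^\ast} F_j^+$ has $j$-th coordinate $+1$ and zero in the other directions of $\{k+1,\ldots,d\}$, while any element of $\partial_{x^\ast} F_j^-$ has $j$-th coordinate $-1$ and zero in those other directions; their first $k$ coordinates range in $2 \partial_{X_k^\ast} \psi_j$ and $2 \partial_{X_k^\ast} \varphi_j$ respectively.

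Next I would build, for each $j \in \{k+1,\ldots,d\}$, two measurable selections $g_j^{\pm} : \mathbb{R}^d \to \mathbb{R}^d$ of $\partial \phi_j$ by requiring $g_j^+ \in \partial F_j^+$ and $g_j^- \in \partial F_j^-$ on $\{F_j^+ = F_j^-\}$ (which contains $A$), and by taking both equal to the gradient of the strictly dominant affine piece elsewhere. The partition of $\mathbb{R}^d$ into $\{F_j^+ > F_j^-\}$, $\{F_j^+ = F_j^-\}$, $\{F_j^+ < F_j^-\}$ is Borel, and on each piece the relevant subdifferential is a measurable, nonempty, closed, convex-valued multifunction, so a Kuratowski--Ryll-Nardzewski type selection theorem provides a measurable selection that can be glued into a global one. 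By the sum rule for subdifferentials of everywhere-finite convex functions, $\partial_x \phi = \sum_j \partial_x \phi_j$, so any combination of the form $\sum_j (\lambda_j^+ g_j^+(x) + \lambda_j^- g_j^-(x))$ with $\lambda_j^+ + \lambda_j^- = 1$ and $\lambda_j^\pm \geqslant 0$ lies in $\partial_x \phi$.

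Finally I would take
\begin{align*}
	f_k := \sum_{j=k+1}^d \tfrac{1}{2} \bigl(g_j^+ + g_j^-\bigr),
	\qquad
	f_j := g_j^+ + \sum_{j' \neq j,\ j' \geqslant k+1} \tfrac{1}{2} \bigl(g_{j'}^+ + g_{j'}^-\bigr) \quad \text{for } j \in \{k+1,\ldots,d\}.
\end{align*}
Each $f_j$ is a measurable selection of $\partial \phi$ by the previous paragraph. On $A$, the $j$-th coordinate of $\tfrac{1}{2}(g_j^+ + g_j^-)$ is the average of $+1$ and $-1$, hence $0$, while the $j$-th coordinate of $g_j^+$ is $+1$. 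Therefore $f_j - f_k = \tfrac{1}{2}(g_j^+ - g_j^-)$ has $j$-th coordinate $1$, giving $|f_j - f_k| \geqslant 1$; and for $j \neq j'$ both in $\{k+1,\ldots,d\}$, $f_j - f_{j'}$ has $j$-th coordinate $+1$ and $j'$-th coordinate $-1$, yielding $|f_j - f_{j'}| \geqslant \sqrt{2}$. All pairwise distances on $A$ are therefore at least $1$, as required. The main subtlety will be the measure-theoretic bookkeeping: verifying that the multifunctions $\partial \varphi_j, \partial \psi_j \subset \mathbb{R}^k$ lift to measurable multifunctions on $\mathbb{R}^d$ via the projection $x \mapsto X_k$, and that patching across the Borel partition preserves measurability. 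Everything else is a routine computation with subdifferentials of maxima and sums of finite convex functions.
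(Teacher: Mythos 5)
Your proposal is correct and follows essentially the same route as the paper: a sum $\phi=\sum_j\phi_j$ of max-type convex functions whose ridges all pass through $A$, the sum rule $\partial_x\phi=\sum_j\partial_x\phi_j$, and selections whose differences on $A$ contain a coordinate vector $\pm e_j$, the paper using $\phi_j=\max(x_j+\psi_j,\varphi_j)$ and ``all-$v$ versus swap-one'' combinations where you use a symmetrized max and midpoints. The only slip is calling $F_j^{\pm}$ ``affine pieces'' and invoking their ``gradient'' off the ridge (they are convex, not affine, and possibly nondifferentiable), but your measurable-selection glueing already repairs this.
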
 

\begin{proof}
	We may permute the coordinates of the space so as to write $\Phi = (id,\varphi_{k+1}-\psi_{k+1},\cdots,\varphi_{d}-\psi_d)$ for some convex functions $\varphi_j,\psi_j : \mathbb{R}^k \to \mathbb{R}$. Let $\phi \coloneqq \sum_{j = k+1}^d \phi_j$, where each $\phi_j$ is given by 
	\begin{align*}
		\phi_j(x_1,\cdots,x_d) \coloneqq \max\left(x_j + \psi_j(x_1,\cdots,x_k),\varphi_j(x_1,\cdots,x_k)\right).
	\end{align*}
	Then $\phi_j$ and $\phi$ are convex. For convenience, denote $x = (X^k,x_{k+1},\cdots,x_d)$ whenever $x \in \mathbb{R}^d$. For $j \in \llbracket k+1,d \rrbracket$, let $v_j,w_j : \mathbb{R}^{k} \to \mathbb{R}^{k}$ be measurable selections of $\partial \varphi_j$ and $\partial \psi_j$ respectively. Then $\partial_x \phi_j$ contains the vectors $(v_j(X^k),0) \in \mathbb{R}^d$ and $(w_j(X^k),0) + e_j$, with $e_j \in \mathbb{R}^d$ the $j^{\text{th}}$ vector of the canonical basis. Indeed, since $x_j = \varphi_j(X^k) - \psi_j(X^k)$, there holds for any $y = (Y^k,y_{k+1},\cdots,y_d)$ that
	\begin{align*}
		\phi_j(y)
		&= \max\left(y_j + \psi_j(Y^k), \varphi_j(Y^k)\right) \\
		&\geqslant \max\left(x_j + \psi_j(X^k) + (y_j-x_j) + \left<w_j(X^k),Y^k-X^k\right>, \varphi_j(X^k) + \left<v_j(X^k),Y^k-X^k\right>\right) \\
		&= \phi_j(x) + \max_{z \in \left\{(w_j(X^k),0)+e_j,(v_j(X^k),0)\right\}} \left<z, y-x\right>.
	\end{align*}
	Recall that $\partial_x \phi = \bigoplus_{j=k+1}^d \partial_x \phi_j$ \cite[10.9 and 7.27]{rockafellarConvexAnalysis1970}. Hence $\partial_x \phi$ contains  $\sum_{j=k+1}^d (v_j(X^k),0)$ and each $\sum_{j=k+1,j\neq \ell}^d (v_j(X^k),0) + (w_{\ell}(X^k),0) + e_{\ell}$ for $\ell \in \llbracket k+1,d\rrbracket$, which are at pairwise distance at least one. 
\end{proof}

\filler{The following lemma allows to pass from the direction of the sets of non-differentiability of a convex function to that of the subdifferential. A function $\varphi$ is semiconvex if $\varphi + \lambda |\cdot|^2/2$ is convex for some $\lambda \in \mathbb{R}$.}

\begin{lem}\label{res:affineortho}
	Let $\varphi : \mathbb{R}^d \to \mathbb{R}$ be semiconvex and $A \subset J_k(\varphi)$ be \DC{k}. Let $x \in A$ be such that
	\begin{itemize}
	\item the dimension of $\partial_x \varphi$ is exactly $d-k$, 
	\item the function $\Phi$ parametrizing $A$ as in \Cref{def:sDCk:statement} is differentiable at $x$,
	\item there exists $\varepsilon > 0$ and $r > 0$ sufficiently small so that for any $y \in A \cap \overline{\mathscr{B}}(x,r)$, the set $\partial_y \varphi$ contains $d-k+1$ vectors pairwise distant from each other by at least $\varepsilon$. 
	\end{itemize}
	Then $\text{Im} \nabla \Phi(x)$ is orthogonal to $\vspan \{\partial_x \varphi - p\}$ for any $p \in \partial_x \varphi$. 
\end{lem}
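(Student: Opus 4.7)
The plan is to proceed by contradiction. Set $T \coloneqq \text{Im}\,\nabla\Phi(X)$ and $H \coloneqq \vspan\{\partial_x\varphi - p\}$ (independent of $p \in \partial_x\varphi$), where $X \in \mathbb{R}^k$ satisfies $\Phi(X) = x$. The target is $T \perp H$; assuming the contrary, the plan is to produce a direction $v \in T$ for which $\argmax_{q \in \partial_x\varphi}\langle v, q\rangle$ reduces to a single extreme point, and to show that this is incompatible with the third assumption of the lemma.

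For $u \in \mathbb{R}^k$, consider the curve $\gamma(t) = \Phi(X + tu) \in A$, which satisfies $\gamma(t) = x + tv + o(t)$ with $v = \nabla\Phi(X) u$. For $|t|$ small, the third assumption furnishes $d-k+1$ vectors $f_j(t) \in \partial_{\gamma(t)}\varphi$, $j = k,\ldots,d$, pairwise at distance at least $\varepsilon$. Semiconvexity of $\varphi$ (with some constant $C$) implies the monotonicity
\begin{align*}
\langle p - q,\, y_1 - y_2\rangle \geqslant -C|y_1 - y_2|^2
\qquad \forall p \in \partial_{y_1}\varphi,\ q \in \partial_{y_2}\varphi.
\end{align*}
Applied with $y_1 = \gamma(t)$, $y_2 = x$, $p = f_j(t)$ and arbitrary $q \in \partial_x\varphi$, using local boundedness of $\partial\varphi$, dividing by $t > 0$, and extracting a simultaneous convergent subsequence $t_n \to 0^+$ along which $f_j(t_n) \to f_j^+$, one obtains
\begin{align*}
\langle f_j^+ - q,\, v\rangle \geqslant 0 \qquad \forall q \in \partial_x\varphi.
\end{align*}
Outer semicontinuity of the subdifferential places $f_j^+ \in \partial_x\varphi$, so each $f_j^+$ is a maximizer of $\langle \cdot, v\rangle$ on $\partial_x\varphi$; the bound $|f_j^+ - f_\ell^+| \geqslant \varepsilon$ for $j \neq \ell$ passes to the limit.

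Suppose now $T \not\perp H$. The orthogonal projection $\pi_H : \mathbb{R}^d \to H$ maps $T$ onto a nonzero subspace $L \subset H$. Since $K \coloneqq \partial_x\varphi - p$ is a compact convex set of affine dimension exactly $d-k$ in $H$, the set of $w \in H$ for which $\argmax_{q \in K}\langle w, q\rangle$ is a single extreme point is dense in $H$ (its complement is the union of relative interiors of normal cones of positive-dimensional exposed faces of $K$), so it meets the nonzero subspace $L$. Pick such $w \in L$, lift it to $u \in \mathbb{R}^k$ with $\pi_H(\nabla\Phi(X) u) = w$, and set $v = \nabla\Phi(X) u \in T$. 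The identity $\langle v, q\rangle = \langle v, p\rangle + \langle w, q - p\rangle$ valid for $q \in \partial_x\varphi$ shows that $\argmax_{q \in \partial_x\varphi}\langle v, q\rangle$ reduces to a singleton $\{q^*\}$. The previous paragraph then forces $f_k^+ = \cdots = f_d^+ = q^*$, contradicting $|f_j^+ - f_\ell^+| \geqslant \varepsilon$ for $j \neq \ell$. Hence $T \perp H$, as desired.

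The main obstacle is the construction of $v$ collapsing the argmax to a single extreme point: all three hypotheses are used here, as the exact dimension $\dim\partial_x\varphi = d-k$ pins down the target cardinality $d-k+1$, and the pairwise-distance assumption provides the contradiction only once the argmax has shrunk to a singleton. The rest of the argument is the classical monotonicity estimate for the subdifferential of the convex function $\varphi + \tfrac{C}{2}|\cdot|^2$, adapted to a curve merely differentiable at $t = 0$ by a selection-and-subsequence extraction.
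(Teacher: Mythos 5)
Your opening moves are correct and coincide with the paper's: you run along the curve $t\mapsto\Phi(X+tu)$, use the monotonicity of the subdifferential of the semiconvex $\varphi$, local boundedness and outer semicontinuity, and obtain $d-k+1$ pairwise $\varepsilon$-separated vectors $f_j^+\in\partial_x\varphi$, each maximizing $\left<\cdot,v\right>$ over $\partial_x\varphi$ with $v=\nabla\Phi(X)u$. The gap is in the closing step. From ``the set of $w\in H$ exposing a single extreme point of $K$ is dense in $H$'' you infer that it meets the subspace $L=\pi_H(T)$; this is a non sequitur when $L$ is a proper subspace of $H$, since a dense (even full-measure) subset of $H$ can miss a lower-dimensional subspace entirely. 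Worse, the direction you need may simply not exist: if $K=\partial_x\varphi-p$ is a square in a two-dimensional $H$ and $L$ is the line spanned by the outer normal of one of its edges, then every nonzero $w\in L$ exposes a whole edge, never a singleton. In that configuration your argument produces no contradiction, because an exposed face of dimension $\geqslant 1$ can contain $d-k+1$ pairwise $\varepsilon$-separated points; so the proof breaks down precisely in the degenerate situations the lemma must handle (namely $d-k\geqslant 2$ with $T$ projecting onto a proper subspace of $H$).

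By contrast, the paper does not hunt for a single exposing direction. It keeps all $d-k+1$ limit vectors $w^\ell$ and runs the two-sided monotonicity estimate for every pair $(\ell,\ell')$ and every $e\in\mathbb{R}^k$, getting $\left<w^\ell-w^{\ell'},\nabla\Phi(X)e\right>=0$ for all pairs, and then concludes by arguing that the differences $w^\ell-w^k$ span the $(d-k)$-dimensional space $\vspan\{\partial_x\varphi-w^k\}$, so that orthogonality to all the differences already yields the claim; this spanning step is where the hypothesis $\dim\partial_x\varphi=d-k$ (and the quality of the separated selections, as produced in \Cref{res:particularphi}) really enters, and it is the genuinely delicate point that your singleton-exposure device does not circumvent. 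To salvage your formulation you would have to show that some $v\in T$ with $\pi_H(v)\neq 0$ exposes a face of $\partial_x\varphi$ too small to carry $d-k+1$ points at mutual distance $\varepsilon$, which the edge-normal example above rules out in general; some version of the spanning argument is needed instead.
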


\filler{Note that the vector space $\vspan \{\partial_x \varphi - p\}$ is independent of the point $p \in \partial_x \varphi$. Intuitively, \Cref{res:affineortho} generalizes the observation that $\partial_x \varphi$ is orthogonal to any differentiable surface contained in its set of minimum points. The assumption that $\partial_y \varphi$ is sufficiently large for $y$ close to $x$ could be replaced, for instance by asking that all $\partial_y \varphi$ near $x$ contain a given (relative) interior point of $\partial_x \varphi$.}

\begin{proof}
	The function $\Phi : \mathbb{R}^k \to \mathbb{R}^d$ is injective, since it coincides with the identity on $k$ of its coordinates. Let $X \coloneqq \Phi^{-1}(x) \in \mathbb{R}^k$ and $(h_n)_{n} \subset (0,1)$ be a sequence converging to 0. Fix $e \in \mathbb{R}^k$, and denote $x_n \coloneqq \Phi(X + h_n e) \in A$. By assumption, for sufficiently large $n$, there exists vectors $(w_n^{\ell})_{\ell \in \llbracket k,d\rrbracket} \subset \partial_{x_n} \varphi$ at distance at least $\varepsilon$ from each other. Since $\varphi$ is locally Lipschitz, the sequences $(w_n^{\ell})_n$ are relatively compact. Extracting successively, we might assume that $w_n^{\ell} \to_n w^{\ell}$. By upper semicontinuity, each $w^{\ell}$ belongs to $\partial_x \varphi$, and is still at distance $\varepsilon$ from $w^{\ell'}$ for $\ell' \neq \ell$. Hence the vectors $(w^{\ell} - w^{k})_{\ell \in \llbracket k+1,d\rrbracket}$ span a space of dimension $d-k$, which must coincide with $\vspan \{\partial_x \varphi - w^k\}$ since the latter is of dimension $d-k$. 
	
	Denoting $\lambda$ the semiconvexity constant of $\varphi$, there holds for any $\ell,\ell' \in \llbracket k,d\rrbracket$ that
	\begin{align*}
		\varphi(x_n) 
		\geqslant \varphi(x) + \left<w^{\ell}, x_n - x\right> - \frac{\lambda}{2} |x_n-x|^2
		\geqslant \varphi(x_n) + \left<w_n^{\ell'}, x - x_n\right> + \left<w^{\ell}, x_n - x\right> - \lambda |x_n - x|^2.
	\end{align*}
	Dividing by $h_n > 0$ and sending $n \to \infty$, we get
	\begin{align*}
		0 
		\geqslant \lim_{n \to \infty} \left<w^{\ell} - w_n^{\ell'}, \frac{\Phi(X+h_n e) - \Phi(X)}{h_n}\right> - \frac{\lambda}{h_n} \left|\Phi(X + h_n e_j) - \Phi(X)\right|^2
		= \left<w^{\ell} - w^{\ell'}, \nabla \Phi(X)(e)\right>.
	\end{align*}
	Interchanging $\ell$ and $\ell'$, we get that $\text{Im} \nabla \Phi(X)$ is orthogonal to $\vspan \{\partial_x \varphi - w^k\}$, as claimed.
\end{proof}

\filler{We make use of the following quite strong definition of differentiability, tailored for \sDC{k} sets.}

\begin{definition}[Tangent plane to a \sDC{k} set]\label{def:tansDCk}
	Let $A$ be a \sDC{k} set contained in $\bigcup_{n} A_n$ for \DC{k} sets $A_n$. Let $\mathbb{N}_x \subset \mathbb{N}$ be the subset of $n \in \mathbb{N}$ such that $x \in A_n$, and for $n \in \mathbb{N}_x$, denote $\Phi_n$ a permutation of $(id_{\mathbb{R}^k},\varphi_{k+1}^n-\psi_{k+1}^n,\cdots,\varphi_d^n-\psi_d^n)$ such that $A_n = \Phi_n(\mathbb{R}^k)$. Then $A$ admits $P$ as a tangent plane at $x$ if for each $n \in \mathbb{N}_x$ and $j \in \llbracket k+1,d\rrbracket$, each $\varphi_j^n,\psi_j^n$ is differentiable at $\Phi_n^{-1}(x)$, and $\text{Im} \nabla \Phi_n(x) = P$.
\end{definition}

\begin{lem}\label{res:exiTansDCk}
	Let $(A_n)_{n \in \mathbb{N}}$ be a family of \DC{k} sets, and denote $A \coloneqq \bigcup_{n \in \mathbb{N}} A_n$. Then there exists a \sDC{k-1} set $B \subset A$ such that $A$ admits a tangent plane $P$ at any point $x \in A \setminus B$ in the sense of \Cref{def:tansDCk}.
\end{lem}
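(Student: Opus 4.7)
The plan is to build $B$ as a countable union of three families of bad pieces, each \sDC{k-1}, and then to check that at every point of the complement the tangent plane is well-defined and independent of $n \in \mathbb{N}_x$. As preparation, I would apply \Cref{res:particularphi} to each $A_n$ to obtain a convex function $\phi_n : \mathbb{R}^d \to \mathbb{R}$ whose subdifferential contains $d-k+1$ pairwise $1$-separated vectors at every $y \in A_n$. In particular $A_n \subset J_k(\phi_n)$, and the uniform condition (c) of \Cref{res:affineortho} is satisfied at every $x \in A_n$ with $\varepsilon = 1$.

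Let $B_n^{(1)} \subset A_n$ be the set of $x$ where some $\varphi_j^n$ or $\psi_j^n$ fails to be differentiable at $\Phi_n^{-1}(x)$. By \Zaitchek's theorem applied in $\mathbb{R}^k$, the set of non-differentiability of each convex function $\varphi_j^n, \psi_j^n : \mathbb{R}^k \to \mathbb{R}$ is \sDC{k-1}, and its pushforward by $\Phi_n$ remains \sDC{k-1} in $\mathbb{R}^d$ thanks to \Cref{rem:compsDC}. Let also $B_n^{(1')} := J_{k-1}(\phi_n) \cap A_n$, which is \sDC{k-1} directly by \Zaitchek applied to $\phi_n$. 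On $A_n \setminus (B_n^{(1)} \cup B_n^{(1')})$ every hypothesis of \Cref{res:affineortho} holds for $\phi_n$, so $\text{Im}\,\nabla\Phi_n(\Phi_n^{-1}(x))$ is orthogonal to $\vspan\{\partial_x\phi_n - p\}$; since the latter has dimension exactly $d-k$ and the former has rank $k$ (due to the identity block in $\Phi_n$), these two $k$-dimensional subspaces coincide.

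The third family controls disagreement between two indices. For $n, m$ and $x \in A_n \cap A_m$ lying outside $B_n^{(1)} \cup B_n^{(1')} \cup B_m^{(1)} \cup B_m^{(1')}$, the previous identification shows that $\text{Im}\,\nabla\Phi_n(\Phi_n^{-1}(x)) \neq \text{Im}\,\nabla\Phi_m(\Phi_m^{-1}(x))$ is equivalent to $\vspan\{\partial_x\phi_n - p_n\} \neq \vspan\{\partial_x\phi_m - p_m\}$ as $(d-k)$-dimensional subspaces. When this happens, their sum has dimension at least $d-k+1$, and since $\partial_x(\phi_n + \phi_m) = \partial_x \phi_n + \partial_x \phi_m$, we obtain $\dim \partial_x(\phi_n + \phi_m) \geq d-k+1$, i.e.\ $x \in J_{k-1}(\phi_n + \phi_m)$. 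The latter is \sDC{k-1} by \Zaitchek applied to the convex function $\phi_n + \phi_m$, so $B_{n,m}^{(2)}$, defined as the set of such bad $x$, is \sDC{k-1}.

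Setting $B := \bigcup_n (B_n^{(1)} \cup B_n^{(1')}) \cup \bigcup_{n,m} B_{n,m}^{(2)}$ gives a countable union of \sDC{k-1} pieces, hence \sDC{k-1}. At any $x \in A \setminus B$ and any $n \in \mathbb{N}_x$, the map $\Phi_n$ is differentiable at $\Phi_n^{-1}(x)$ and all resulting tangent planes $\text{Im}\,\nabla\Phi_n(\Phi_n^{-1}(x))$ coincide, providing a common plane $P$ in the sense of \Cref{def:tansDCk}. The main conceptual step I expect is the third one: translating the geometric fact that two tangent planes differ into non-differentiability of the single convex function $\phi_n + \phi_m$. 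This is what keeps the exceptional set inside the \sDC{k-1} layer and lets \Zaitchek do the final bookkeeping.
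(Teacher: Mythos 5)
Your proposal is correct and follows essentially the same route as the paper's proof: remove the $\Phi_n$-images of the non-differentiability sets of the DC components, use \Cref{res:particularphi} and \Cref{res:affineortho} to identify each tangent plane with the orthogonal of $\vspan\{\partial_x\phi_n-p\}$, and absorb the points where two planes disagree into $J_{k-1}(\phi_n+\phi_m)$ via \Zaitchek. The only (harmless) cosmetic difference is that you place the sets $J_{k-1}(\phi_n)\cap A_n$ in the global exceptional set rather than only inside the pairwise comparison, as the paper does.
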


\begin{proof}
	For each $n$, denote $\Phi_n : \mathbb{R}^k \to \mathbb{R}$ a function as in \Cref{def:sDCk:statement} such that $A_n = \Phi_n(\mathbb{R}^k)$. Let $B^{(n)} \subset \mathbb{R}^k$ be the union of the sets of non-differentiability of the functions $\varphi_j,\psi_j$ for $j \in \llbracket k+1,d\rrbracket$. Each set $B^{(n)}$ is \sDC{k-1}, and by \Cref{rem:compsDC}, the composition $\Phi_n(B^{(n)}) \subset \mathbb{R}^d$ is still a \sDC{k-1} subset of $\mathbb{R}^d$. Denote $P_n(x)$ the image of $\nabla \Phi_n(x)$ at any point $x \in A_n \setminus B^{(n)}$. 
	
	We now construct a \sDC{k-1} subset of $A$ out of which the surfaces $A_n$ cannot intersect transversely. Precisely, for each $n \neq m$, let $B^{(n,m)}$ be the set of $x \in A_n \cap A_m \setminus (B^{(n)} \cup B^{(m)})$ such that $P_n(x) \neq P_m(x)$. We construct a convex function containing $B^{(n,m)}$ in its non-differentiability set $J_{k-1}$. By \Cref{res:particularphi}, there exist convex functions $\phi^{(n)},\phi^{(m)} : \mathbb{R}^d \to \mathbb{R}$ such that $A_n \subset J_k(\phi^{(n)})$, $A_m \subset J_k(\phi^{(m)})$, and the subdifferentials of $\phi^{(n)},\phi^{(m)}$ on $A_n,A_m$ contain uniformly separated points. Denote $B_{n,m} \coloneqq J_{k-1}(\phi^{(n)}) \cup J_{k-1} (\phi^{(m)})$, which is \sDC{k-1}. By \Cref{res:affineortho}, for any $x \in B^{(n,m)} \setminus B_{n,m}$, the affine space containing $\partial_x \phi^{(n)}$ is orthogonal to $P_n(x)$, and similarly for $\partial_x \phi^{(m)}$ and $P_m(x)$. Since $P_n(x) \neq P_m(x)$, we deduce that the affine hulls of $\partial_x \phi^{(n)}$ and $\partial_x \phi^{(m)}$ are not parallel. As a consequence, the convex set 
	\begin{align*}
		\partial_{x} (\phi^{(n)} + \phi^{(m)}) = \left\{ v+w \st v \in \partial_x \phi^{(n)}, \ w \in \partial_x \phi^{(m)} \right\}
	\end{align*}
	has dimension at least $d-k+1$. Hence $B^{(n,m)} \setminus B_{n,m}$ is contained in $J_{k-1}(\phi^{(n)} + \phi^{(m)})$, which is \sDC{k-1} since $\phi^{(n)} + \phi^{(m)}$ is convex. Adding back $B_{n,m}$, we obtain that $B^{(n,m)}$ is \sDC{k-1}. 
	
	Denote $B \coloneqq \bigcup_{n \in \mathbb{N}} B^{(n)} \cup \bigcup_{n,m \in \mathbb{N}^2} B^{(n,m)}$. Then $B$ is \sDC{k-1}, and if $x \in A \setminus B$, all sets $A_n$ containing $x$ admit a plane $P_n(x)$ as before. Additionally, $P_n(x) = P_m(x) \eqqcolon P(x)$ for any $n \neq m$ such that $x \in A_n \cap A_m$, otherwise $x$ would belong to $B^{(n,m)} \subset B$. 
\end{proof}

\renewcommand{\Sp}{\Omega}

%%%%%%%%%%%%%%%%%%%%%%%%%%%%%%%%%%%%%%%%%%%%%%%%%%%%%%%%%%%%
\subsection{Characterization of $\mu^k$ by concentration on \DC{k} sets}\label{sec:concentration}
%%%%%%%%%%%%%%%%%%%%%%%%%%%%%%%%%%%%%%%%%%%%%%%%%%%%%%%%%%%%

\filler{We come back to measures over $\Sp = \mathbb{R}^d$, with two intermediate results relating the dimension of the map $D^{\Tan} = D^{\perp}$ appearing in \Cref{res:decomp}, and the size of sets on which $\mu$ can be concentrated.}

\begin{prop}[Large $D^{\Tan}$ implies thin concentrations]\label{res:idenmuk:concentration}
	Let $0 \leqslant k \leqslant d-1$, and $\mu \in \Psp_2(\Sp)$ be a measure such that the Grassmannian section $D^{\Tan} : \Sp \rightrightarrows \mathbb{R}^d$ characterizing $\Tan_{\mu}^0$ has dimension larger than $d-k$ at $\mu-$almost every point. Then $\mu$ is concentrated on a \sDC{k} set. 
\end{prop}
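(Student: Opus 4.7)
The plan is to invoke \Zaitchek's theorem (\Cref{res:zaitchek}): since $J_{k-1}(\varphi) = \{x : \dim \partial_x \varphi \geq d-k+1\}$ is \sDC{k-1}, hence a fortiori \sDC{k}, for every convex $\varphi$, it suffices to find countably many convex functions whose $J_{k-1}$-sets cover a $\mu$-conegligible subset. These will be produced from the Kuratowski limits of supports of optimal plans approximating a carefully chosen centred tangent field.

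By the construction of $D^{\Tan}$ in \Cref{res:charSet0} (applied with $\Set_{\mu}^0 = \Sol_{\mu}^0$) together with a Castaing representation, I would extract a countable family $(g_n)_n \subset L^2_{\mu}(\Sp; \mathbb{R}^d)$ whose measurable representatives span $D^{\Tan}(x)$ at $\mu$-almost every $x$; each $\gamma_{g_n}$ is centred and concentrated on $\graph D^{\Tan}$, hence lies in $\Tan_{\mu}^0$ by \Cref{res:charTanSol0}. For each finite $F \subset \mathbb{N}$ with $|F| = d-k+1$, let $B_F \coloneqq \{x \in \Sp : (g_n(x))_{n \in F} \text{ is linearly independent}\}$. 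The hypothesis makes $\bigcup_F B_F$ $\mu$-conegligible, so by countable exhaustion it suffices to treat a fixed $F$ with $\mu(B_F) > 0$. Egoroff and Lusin allow further restriction to a closed set $B'_F \subset B_F$ on which the $g_n$ are continuous and uniformly bounded; set $\mu'_F \coloneqq (\mu \resmes B'_F)/\mu(B'_F)$. The centred field $\xi_F \in \Psp_2(\T\Sp)_{\mu'_F}$ with fibres $\xi_{F,x} \coloneqq \tfrac{1}{2|F|}\sum_{n \in F}(\delta_{g_n(x)} + \delta_{-g_n(x)})$ is concentrated on $\graph D^{\Tan} \cap \T B'_F$, so by \Cref{res:charTanSol0} and \Cref{res:restr:Tan}, $\xi_F \in \Tan_{\mu'_F}^0$.

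By definition of the tangent cone, $\xi_F = \lim_m \eta_m$ in $W_{\mu'_F}$ with each $\eta_m = (\pi_x, \pi_y - \pi_x)_{\#}\alpha_m$ for $\alpha_m \in \Gamma_o(\mu'_F, \nu_m)$, and each $\alpha_m$ is supported on a cyclically monotone subset of $\Sp \times \Sp$. Up to a subsequence, fibrewise $W_2$-convergence $\eta_{m,x} \to \xi_{F,x}$ holds for $\mu'_F$-almost every $x$; the atomic character of the limit combined with the uniform boundedness of the $g_n$ on $B'_F$ forces, for each atom $\pm g_n(x)$, the existence of points in $\supp \eta_{m,x}$ converging to it. Consequently, the Kuratowski upper limit $S$ of $(\supp \alpha_m)_m$ contains $\{(x, x + \varepsilon g_n(x)) : x \in B'_F, \ n \in F, \ \varepsilon \in \{\pm 1\}\}$. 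Cyclical monotonicity passes to the limit (each finite cyclic inequality is a closed condition), so $S$ is itself cyclically monotone; by Rockafellar's representation, $S \subset \graph \partial \varphi$ for some proper lsc convex $\varphi : \Sp \to \mathbb{R} \cup \{+\infty\}$. For every $x \in B'_F$, $\partial_x \varphi$ thus contains the $2(d-k+1)$ points $\{x \pm g_n(x)\}_{n \in F}$, whose affine hull has dimension $d-k+1$ by the linear independence of $(g_n(x))_{n \in F}$. Hence $\dim \partial_x \varphi \geq d-k+1$ on all of $B'_F$, i.e.\ $B'_F \subset J_{k-1}(\varphi)$, which is \sDC{k-1} by \Zaitchek. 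Exhausting $B_F$ by countably many such $B'_F$ and taking a countable union over the finite sets $F$ concludes.

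The main obstacle lies in upgrading the $W_{\mu'_F}$-convergence $\eta_m \to \xi_F$ into fibrewise convergence of supports on the nose: one must rule out mass sliding off to infinity or failing to accumulate on the correct atoms of $\xi_{F,x}$. The Egoroff--Lusin restriction to $B'_F$ is precisely what ensures both that the fibres of $\xi_F$ are uniformly bounded and that the $g_n$ are continuous, so that the $W_2$-convergence of atomic measures upgrades to Hausdorff convergence of compact supports. Once this step is in place, the Kuratowski-limit stability of cyclical monotonicity together with Rockafellar's representation theorem yields the desired convex function in one shot, bypassing the more delicate rescaled compactness arguments on the Brenier potentials themselves.
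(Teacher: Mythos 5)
Your high-level idea (splitting directions of a centred tangent field force a large subdifferential of a single potential, then \Zaitchek) is indeed the paper's, but two steps do not survive scrutiny. First, an off-by-one: the hypothesis gives $\dim D^{\Tan}(x) \geqslant d-k$ only, and this is exactly how the proposition is used in \Cref{res:charmuk}, where $D^{\Tan}$ has dimension \emph{equal} to $d-k$ for $\mu^k$; the paper's own proof accordingly selects just $d-k$ orthonormal sections of $D^{\Tan}$. At a point where $\dim D^{\Tan}(x)=d-k$, no family of $d-k+1$ selections can be linearly independent, so your sets $B_F$ with $|F|=d-k+1$ are all $\mu$-null and the covering $\bigcup_F B_F$ misses everything in the only case that matters. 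You should take $|F|=d-k$ and aim at $J_k(\varphi)$ (still \sDC{k} by \Zaitchek), not $J_{k-1}(\varphi)$.

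Second, and more seriously, the limiting procedure on optimal plans fails. A tangent field is a $W_{\mu}$-limit of \emph{rescaled} geodesic velocities $\lambda_m\cdot\eta_m$; equivalently, of velocities of geodesics defined on intervals $[0,\tau_m]$ with $\tau_m$ possibly tending to $0$. The optimal plans are then $\alpha_m=(\pi_x,\pi_x+\tau_m\pi_v)_{\#}\eta_m$, whose supports collapse onto the diagonal as $\tau_m\to 0$, so their Kuratowski limit carries no directional information; your identification $\eta_m=(\pi_x,\pi_y-\pi_x)_{\#}\alpha_m$ with $\alpha_m$ optimal silently assumes $\tau_m\equiv 1$, which is not granted by \Cref{def:TanSol0}. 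In addition, cyclical monotonicity passes to Kuratowski \emph{lower} limits (a finite cycle there is approximated inside one and the same $\supp\alpha_m$ for $m$ large), not to the upper limit you invoke, since distinct cluster points may arise along disjoint subsequences; that particular slip is repairable because the atoms you need do lie in the lower limit along your a.e.-convergent subsequence, but the $\tau_m\to0$ collapse is not. This is precisely why the paper takes no limit of plans: it fixes a \emph{single} approximating geodesic velocity, shows by a Chebyshev estimate that its fibres already charge balls of radius $1/2$ around each $\pm f_j^{\bdot}(x)$ on a set of measure $\geqslant 1-\varepsilon$, writes an explicit real-valued semiconvex Kantorovich potential (with the time $\tau$ of that one geodesic built in, after a reduction to compactly supported $\mu$) whose subdifferential has dimension $\geqslant d-k$ there, and then reaches full measure by a countable exhaustion. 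Your route would additionally have to address that the Rockafellar potential is only proper lsc and may take the value $+\infty$, whereas \Zaitchek's theorem as used here concerns finite convex functions; the paper's explicit construction is designed to avoid this as well.
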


\filler{Recall from \Cref{def:Jk} that for $\varphi : \Sp \to \mathbb{R}$ semiconvex, $J_k(\varphi) \coloneqq \left\{ x \in \Sp \st \dim \partial_x \varphi \geqslant d - k \right\}$.}

\begin{proof}
	We first consider the case of a compactly supported measure, then proceed by exhaustion. Precisely, in the two first steps, we show that whenever $\mu$ is compactly supported and such that $D^{\Tan} = D^{\Tan}_{\mu}$ has dimension $d-k$, then there exists a measurable \sDC{k} set $B$ such that $\mu(B) \geqslant 1/2$.

	\stephighlight{Compact case: construction of $B$}
	Let $(f^{\bdot}_j)_{j \in \llbracket k+1,d\rrbracket}$ be measurable selections of $D^{\Tan}$ such that $(f^{\bdot}_{k+1}(x),\cdots,f^{\bdot}_d(x))$ is an orthonormal basis of $D^{\Tan}(x)$ for any $x \in \Sp$. Such applications can be constructed by the Gram-Schmidt orthogonalization algorithm applied to a Castaing representation of $D^{\Tan}$, provided by \cite[Theorem 3.(d)]{rockafellarMeasurableDependenceConvex1969}. By \Cref{res:charSet0}, the measure field $\xi \coloneqq \frac{1}{d-k} \sum_{j=k+1}^d \frac{1}{2} \left[(id,-f_j^{\bdot})_{\#} \mu + (id,f_j^{\bdot})_{\#} \mu\right]$ belongs to $\Tan_{\mu}^0$. We show by a Chebyshev inequality that if $\eta$ is close to $\xi$ with respect to $W_{\mu}$, then it must put mass on balls around each $f_j^{\bdot}(x)$ for any $x$ belonging to a set of large $\mu-$measure. 
	
	Let $j \in \llbracket k+1,d\rrbracket$ and $s \in \{\pm 1\}$. Since $(x,v) \mapsto |v - s f_j^{\bdot}(x)|$ is measurable from $\T\Sp$ to $\mathbb{R}$, the sets
	\begin{align*}
		A_{j,s}
		\coloneqq \left\{ (x,v) \in \T\Sp \st |v - s f_j^{\bdot}(x)| \leqslant 1/2 \right\}
		\subset \T\Sp
	\end{align*}
	are measurable. Given a set $A \subset \T\Sp$, denote $A^x \coloneqq \left\{ v \st (x,v) \in A \right\}$. By the disintegration theorem of \cite[\nopp 10.4.15]{bogachevMeasureTheory2007}, for any $\eta \in \Psp_2(\T\Sp)_{\mu}$, there exists measures $(\eta_x)_{x \in \Sp}$ with $\eta_x \in \Psp(\mathbb{R}^d)$ such that for any $A \subset \T\Sp$ measurable, $x \mapsto \eta_x\left(A^x\right)$ is measurable, and $\eta(A) = \int_{x \in \Sp} \eta_x\left(A^x\right) d\mu(x)$. Therefore, the set 
	\begin{align*}
		C \coloneqq \left\{ x \in \Sp \st \exists (j,s) \in \llbracket k+1,d\rrbracket \times \{\pm 1\} \text{ such that } \eta_x(A_{j,s}^x) = \eta_x\left(\overline{\mathscr{B}}(s f_j^{\bdot}(x),1/2)\right) \leqslant \frac{1/2}{2(d-k)} \right\}
	\end{align*}
	is a measurable subset of $\Sp$. Let $\xi_x = \frac{1}{d-k} \sum_{j=k+1}^d \frac{1}{2} \left[\delta_{(x,-f_j^{\bdot}(x))} + \delta_{(x,f_j^{\bdot}(x))}\right]$ be a particular disintegration of $\xi$. Whenever $x \in C$, the measure $\xi_x$ places a mass $\frac{1}{2(d-k)}$ on each $\pm f_j^{\bdot}(x)$, whereas $\eta_x$ places less than $\frac{1/2}{2(d-k)}$ on at least one ball of radius $1/2$ centred in these points. Consequently, any transport plan $\alpha_x$ between $\eta_x$ and $\xi_x$ will force a mass of at least $\frac{1/2}{2(d-k)}$ to travel from a distance superior to $1/2$. In integral form, $\int_{v \in \mathbb{R}^d} |v|^2 d\alpha_x \geqslant \frac{1/2}{2(d-k)} (1/2)^2$ for any $x \in C$. Passing to the infimum over $\alpha_x$, this yields $W^2(\eta_x,\xi_x) \geqslant \frac{1}{16(d-k)}$, and integrating over $x$, we get that $\vphantom{\frac{1}{16(d-k)}}$ 
	\begin{align}\label{idenmuk:concentration:chebychev}
		\mu\left(C\right)
		= \int_{x \in \Sp} \ind_{C}(x) d\mu(x)
		\leqslant \int_{x \in \Sp} \frac{W^2(\eta_x,\xi_x)}{1/(16(d-k))} d\mu(x)
		\leqslant 16(d-k) W^2(\eta,\xi).
	\end{align}
	Here the last inequality stands since $W^2(\xi,\eta) = \int_{x \in \Sp} W^2(\xi_x,\eta_x) d\mu$ \cite[Prop.~4.2]{gigliGeometrySpaceProbability2008}. Since $\xi$ is tangent, there exists $\eta \in \Psp_2(\T\Sp)_{\mu}$ inducing a geodesic on $[0,\tau]$ for $\tau > 0$, and such that $16 (d-k) W^2(\eta,\xi) \leqslant \varepsilon$. Let $B \coloneqq C^{c}$. By \Cref{idenmuk:concentration:chebychev}, $\mu(B) \geqslant 1 - \varepsilon$. By definition of $C$, $\eta_x$ puts a mass of at least $\frac{1/2}{2(d-k)}$ on each $\overline{\mathscr{B}}(\pm f_j^{\bdot}(x),1/2)$ for $x \in B$, so that the restricted measure 
	\begin{align*}
		\gamma \coloneqq \eta \resmes \bigcup\displaystyle_{j \in \llbracket k+1,d\rrbracket}^{s \in \{\pm 1\}} A_{j,s}
	\end{align*}
	still puts mass on $\T B$, i.e. $B \subset \supp \pi_{x \#} \gamma$. Moreover, by restriction of optimality \cite[Theorem 4.6]{villaniOptimalTransport2009}, $\gamma$ induces a geodesic on $[0,\tau]$. 
	
	\stephighlight{Compact case: $B$ is \sDC{k}}
	Define $\varphi$ as the classical explicit Kantorovich potential \cite[(5.13)]{villaniOptimalTransport2009}, i.e.
	\begin{align*}
		\varphi(x) \coloneqq \frac{1}{2 \tau} \sup \left\{ |\tau v_N|^2 - |x_N + \tau v_N - x|^2 + \sum_{i=0}^{N-1}\left(|\tau v_i|^2 - |x_i + \tau v_i - x_{i+1}|^2\right) \st \begin{matrix}
		N \in \mathbb{N}, \text{ and} \\ (x_i,v_i)_{i=1}^N \subset \supp \gamma\end{matrix} \right\}.
	\end{align*}
	Then $\varphi(x) \geqslant (|\tau v_0|^2 - |x_0 + \tau v_0 - x|^2)/(2 \tau)$ by taking $N = 0$. In particular, $\varphi(x_0) \geqslant 0$, and $\varphi(x_0) \leqslant 0$ by cyclical monotonicity. Moreover, $\varphi(x) + |x|^2/(2\tau)$ is a supremum of convex functions, hence $\varphi$ is semiconvex. Using that $\sup_{a \in A} f(a) - \sup_{a \in A} g(a) \leqslant \sup_{a \in A} f(a)-g(a)$ whenever $\sup_{A} g < \infty$, there holds 
	\begin{align*}
		\varphi(x) - \varphi(x_0) 
		&\leqslant \frac{1}{2 \tau} \sup_{(x_N,v_N) \in \supp \gamma} |x_N + \tau v_N - x_0|^2 - |x_N + \tau v_N - x|^2 \\
		&= \frac{|x_0|^2 - |x|^2}{2 \tau} + \frac{|x - x_0|}{\tau} \sup_{(x_N,v_N) \in \supp \gamma} |x_N + \tau v_N|,
	\end{align*}
	with the last supremum bounded by $\diam \supp \mu + \tau (1+1/2) < \infty$. Therefore, $\varphi$ is real-valued. We now show that if $(x,v) \in \supp \gamma$, then $v \in \partial_x \varphi$. For any $\iota > 0$, let $(x_i,v_i)_{i=1}^{N} \subset \supp \gamma$ be $\iota-$optimal for the definition of $\varphi(x)$. Appending $(x,v)$ to the sequence $(x_i,v_i)_i$, we get that for any $y \in \Sp$,
	\begin{align*}
		\varphi(y) 
		&\geqslant \frac{1}{2 \tau} \left[|\tau v|^2 - |x + \tau v - y|^2 + |\tau v_N|^2 - |x_N + \tau v_N - x|^2 + \sum_{i=0}^{N-1} \left(|\tau v_i|^2 - |x_i + \tau v_i - x_{i+1}|^2\right)\right] \\
		&\geqslant \left<v, y - x\right> - \frac{|x - y|^2}{2 \tau} + \varphi(x) - \iota.
	\end{align*}
	Letting $\iota \searrow 0$, we obtain that $v \in \partial_x \varphi$. As $\gamma_x$ puts mass on balls of radius $1/2$ around each $\pm f_j^{\bdot}(x)$ for any $x \in B \subset \supp \mu$, with $|\pm f_j^{\bdot}(x)| = 1$, the convex set $\partial_x \varphi$ has dimension at least $d-k$, so that $B \subset J_k(\varphi)$. By \Zaitchek's theorem, $B$ is \sDC{k}.
	
	\stephighlight{General case: exhaustion}
	Consider now $\mu \in \Psp_2(\Sp)$ arbitrary. 
	Let $R_0 > 0$ be large enough such that $\mu(\overline{\mathscr{B}}(0,R_0)) \geqslant 1/2$. By the restriction formula of \Cref{res:restr:Tan}, the centred tangent cone to $\mu_{0} \coloneqq \mu \resmes \overline{\mathscr{B}}(0,R_0) / \mu(\overline{\mathscr{B}}(0,R_0))$ shares the application $D^{\Tan}$ with $\mu$. By the previous steps, there exists a \sDC{k} set $B_0 \subset \overline{\mathscr{B}}(0,R_0)$ such that $\mu_0(B_0) \geqslant 1/2$, which implies $\mu(B_0) \geqslant 1/4$. 
	
	Assume an increasing sequence of radii $R_j$ and measurable \sDC{k} sets $B_j$ has been constructed such that $\mu(B_j) \geqslant \mu(B_{j-1}) + (1-\mu(B_{j-1}))/4$ for $j \in \llbracket 1, n\rrbracket$. Let $R_{n+1}$ be such that $\mu(B_n^c \cap \mathscr{B}(0,R_{n+1})) \geqslant \mu(B_n^c)/2$, and apply the previous steps to the measure $\mu_{n+1} \coloneqq \mu \resmes (B_n^c \cap \mathscr{B}(0,R_{n+1})) / \mu(B_n^c \cap \mathscr{B}(0,R_{n+1}))$. This yields a measurable \sDC{k} set $B$ such that $\mu_{n+1}(B) \geqslant 1/2$. Up to restriction, we may consider that $B \subset \overline{\mathscr{B}}(0,R_{n+1})$ and is disjoint from $B_n$. Define $B_{n+1} \coloneqq B_n \cup B$. Then 
	\begin{align*}
		\mu(B_{n+1})
		= \mu(B_n) + \mu(B_n^c \cap \mathscr{B}(0,R_{n+1})) \times \mu_{n+1}(B)
		\geqslant \mu(B_n) + \frac{\mu(B_n^c)}{4}
		= \mu(B_n) + \frac{1 - \mu(B_n)}{4}.
	\end{align*} 
	The sequence $p_n \coloneqq \mu(B_n)$ satisfies $p_0 \geqslant \frac{1}{4}$ and $p_{n+1} \geqslant \frac{1+3 p_n}{4}$, so $\lim_{n \to \infty} p_n = 1$. 
	Therefore, the limit $B_{\infty}$ of the increasing sequence $(B_n)_n$ satisfies $\mu(B_{\infty}) = 1$. 
	As a countable union of \sDC{k} sets, $B_{\infty}$ is \sDC{k}.
\end{proof}

\begin{rem}[Support]
	The conclusion of \Cref{res:idenmuk:concentration} cannot be improved to covering the support of $\mu$ by \DC{k} sets. Indeed, consider $\mu = \sum_{n \in \mathbb{N}} \alpha_n \delta_{x_n}$ for a sequence $(x_n)_{n \in \mathbb{N}}$ dense in $[0,1]$. By \cite[Proposition 2.9]{aussedatStructureGeometricTangent2025}, $\Tan_{\mu}^0$ is equal to $\Psp_2(\T\Sp)_{\mu}^0$, so that $D^{\Tan}$ can be taken identically equal to $\mathbb{R}$. The measure $\mu$ is concentrated on the \sDC{0} (countable) set $\bigcup_{n \in \mathbb{N}} \{x_n\}$, but its support is $[0,1]$, which is not countable. 
\end{rem}

\filler{The following result is a partial converse of \Cref{res:idenmuk:concentration}.}

\begin{lem}[Thin concentration implies large $D^{\Tan}$]\label{res:idenmuk:lowerdim}
	Let $0 \leqslant k \leqslant d-1$, and $\mu \in \Psp_2(\Sp)$ put a positive mass on a measurable \DC{k} set $A \subset \Sp$. Then the application $D^{\Tan}$ characterizing $\Tan_{\mu}^0$ in \Cref{res:charSet0} satisfies $\dim D^{\Tan}(x) \geqslant d-k$ for $\mu-$almost every $x \in A$.
\end{lem}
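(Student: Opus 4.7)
My strategy is to construct, at $\mu$-almost every $x \in A$, at least $d-k$ linearly independent vectors belonging to $D^{\Tan}(x)$, by exhibiting centred tangent measure fields of the form $\gamma_h$ (as introduced in \Cref{def:gammaf}) whose support forces membership in $D^{\Tan}$. Applying \Cref{res:particularphi} to the \DC{k} set $A$ yields a convex function $\phi : \mathbb{R}^d \to \mathbb{R}$ and measurable selections $f_k, \ldots, f_d$ of $\partial \phi$, pairwise separated by distance at least one on $A$. Inspection of the explicit construction in that lemma shows that the $d-k$ differences $u_j \coloneqq f_j - f_k$ for $j \in \llbracket k+1, d\rrbracket$ are linearly independent at every $x \in A$: after the coordinate permutation making $A$ a \DC{k} graph, each $u_j(x)$ has a unit component in the direction $e_j$, and these components are independent across $j$.

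The central step is to prove that $\gamma_{u_j/2} \in \Tan_\mu^0$ for each $j$. Since $f_j, f_k$ are only locally bounded (by local Lipschitz continuity of $\phi$), I first localize: for $R > 0$ with $\mu(\overline{\mathscr{B}}(0,R)) > 0$, let $\mu_R$ denote the normalized restriction of $\mu$ to this ball. Combining \Cref{res:restr:Tan} and \Cref{res:charTanSol0} applied to both $\mu$ and $\mu_R$, one checks that $D^{\Tan}_{\mu_R}(x) = D^{\Tan}_\mu(x)$ for $\mu_R$-almost every $x$, so it suffices to work with $\mu_R$ and eventually let $R \to \infty$. Under compact support, $f_j, f_k \in L^2_{\mu_R}$. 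For $\tau > 0$ small, the function $\psi_\tau(x) \coloneqq \tfrac{1}{2}|x|^2 + \tau \phi(x)$ is convex with $\partial_x \psi_\tau = x + \tau \partial_x \phi$, and the superposition plan
\begin{align*}
	\alpha_\tau \coloneqq \tfrac{1}{2}\big[(id, id + \tau f_j)_\# \mu_R + (id, id + \tau f_k)_\# \mu_R\big]
\end{align*}
is concentrated on $\graph \partial \psi_\tau$, a cyclically monotone set for the quadratic cost, hence $\alpha_\tau$ is optimal between its marginals. Rescaling its associated velocity by $1/\tau$ shows that $\xi_{jk} \coloneqq \tfrac{1}{2}[(id, f_j)_\# \mu_R + (id, f_k)_\# \mu_R]$ belongs to $\Tan_{\mu_R}$ and has barycenter $(f_j + f_k)/2$. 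By \Cref{rem:barycentreddecomp}, its centred part, which equals $\gamma_{u_j/2}$, lies in $\Tan_{\mu_R}^0$.

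To conclude, \Cref{res:charTanSol0} implies that $u_j(x) \in D^{\Tan}_{\mu_R}(x) = D^{\Tan}_\mu(x)$ for $\mu$-a.e.\ $x \in \overline{\mathscr{B}}(0,R)$. Intersecting the countably many full-measure sets indexed by $R \in \mathbb{N}$ and $j \in \llbracket k+1, d\rrbracket$, and using the pointwise linear independence of $(u_j)_{j=k+1}^d$ on $A$, one obtains $\dim D^{\Tan}(x) \geqslant d-k$ for $\mu$-a.e.\ $x \in A$. The main subtle point is the verification $\xi_{jk} \in \Tan_{\mu_R}$: since $\xi_{jk}$ is a superposition rather than a horizontal interpolation of the map-induced tangent fields $(id, f_j)_\# \mu_R$ and $(id, f_k)_\# \mu_R$, horizontal convexity of $\Tan_{\mu_R}$ does not directly apply; the resolution is that $\alpha_\tau$ still sits entirely on the graph of a single subdifferential $\partial \psi_\tau$, so cyclical monotonicity yields optimality of the superposition directly.
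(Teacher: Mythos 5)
Your proof is correct and follows essentially the same route as the paper: selections of $\partial\phi$ from \Cref{res:particularphi}, a plan concentrated on the graph of a convex subdifferential (hence cyclically monotone and optimal), removal of the barycenter via \Cref{rem:barycentreddecomp}, and the conclusion through the characterization of $\Tan_{\mu}^0$ by concentration on $\graph D^{\Tan}$. The only differences are cosmetic but welcome: your localization to balls combined with \Cref{res:restr:Tan}, and the rescaled potential $\tfrac{1}{2}|x|^2+\tau\phi$, take care of the $L^2_{\mu}$-integrability of the subgradient selections, a point that the paper's direct use of $(id,f_j-id)_{\#}\mu$ leaves implicit.
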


\begin{proof}
	By \Cref{res:particularphi}, there exists a convex function $\varphi : \Sp \to \mathbb{R}$ and $d-k+1$ measurable vector fields $f_{k},\cdots,f_{d}$ such that $f_j(x) \in \partial_x \varphi$ for all $x$, and for any $x \in A$, $|f_j(x) - f_{\ell}(x)| \geqslant 1$ for $j \neq \ell \in \llbracket k,d\rrbracket$. Let 
	\begin{align*}
		\xi \coloneqq \frac{1}{d-k+1} \sum_{j=k+1}^d \frac{(id, f_j-id)_{\#} \mu + (id, f_k-id)_{\#} \mu}{2}.
	\end{align*}
	Then $(\pi_x,\pi_x+\pi_y)_{\#} \xi$ is concentrated on the graph of the convex function $\varphi$, and $\xi$ is the velocity of a geodesic. By \Cref{rem:barycentreddecomp}, the centred measure field $\xi^0 \coloneqq \left(\pi_x, \pi_v - \bary{\xi}{}(\pi_x)\right)_{\#} \xi$ is also tangent, and concentrated on $\big(x,\pm \frac{f_j(x) - f_k(x)}{2}\big)$ for $j \in \llbracket k+1, d\rrbracket$. Since the latter vectors are independent, $\xi^0$ splits mass in at least $d-k$ directions, so that $\dim D^{\Tan}(x) \geqslant d-k$ for $\mu-$almost every $x \in A$. 
\end{proof}

\filler{Combining both results, we arrive at the following statement.}

\begin{theorem}\label{res:charmuk}
	Let $\mu \in \Psp_2(\Sp)$ be decomposed as $\mu = \sum_{k=0}^d m_k \mu^k$ according to \Cref{res:decomp}. Then, for each $k \in \llbracket 0,d \rrbracket$ such that $m_k > 0$, the measure $\mu^k$ is concentrated on a \sDC{k} set and gives 0 mass to any \sDC{k-1} set. Moreover, for all $k \in \llbracket 0,d\rrbracket$,
	\begin{align}\label{charmuk:statement}
		m_k \, \mu^k(A) = \max_{\substack{B \text{ \sDC{k}} \\ \text{measurable }}} \min_{\substack{\vphantom{\int} C \text{ \sDC{k-1}} \\ \text{measurable }}} \mu(A \cap B \setminus C)
		\qquad \text{ for any measurable } A \subset \Sp,
	\end{align} 
	and the order of $\min$ and $\max$ can be reversed.
\end{theorem}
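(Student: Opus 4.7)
The plan is to apply the two preparatory results \Cref{res:idenmuk:concentration,res:idenmuk:lowerdim} to each component $\mu^k$ of the decomposition \Cref{res:decomp}, then extract the max--min formula \Cref{charmuk:statement} by a direct bookkeeping argument. A useful preliminary is the monotonicity \sDC{j} $\subset$ \sDC{\ell} whenever $j \leqslant \ell$: via \Zaitchek's theorem, any \sDC{j} set sits inside $J_j(\varphi) \subset J_\ell(\varphi)$ for a suitable convex $\varphi$, and $J_\ell(\varphi)$ is itself \sDC{\ell}, so the inclusion follows since being \sDC{\ell} only requires coverability by countably many \DC{\ell} sets.

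I would first address the two concentration statements for $\mu^k$. By \Cref{res:restr:Tan} combined with the explicit construction of $D_k$ in the proof of \Cref{res:decomp}, the Grassmannian section $D^{\Tan}_{\mu^k}$ characterizing $\Tan_{\mu^k}^0$ has dimension exactly $d-k$ at $\mu^k$--almost every point. Applying \Cref{res:idenmuk:concentration} to $\mu^k$ (the case $k = d$ being trivial since $\mathbb{R}^d$ is itself \sDC{d}) then produces a measurable \sDC{k} concentration set $B_k^*$ with $\mu^k(B_k^*) = 1$. For the converse, assume $\mu^k$ charged a \sDC{k-1} set $E = \bigcup_n E_n$ with each $E_n$ \DC{k-1}; countable additivity would give $\mu^k(E_{n_0}) > 0$ for some $n_0$, and \Cref{res:idenmuk:lowerdim} applied with $k-1$ in place of $k$ would force $\dim D^{\Tan}_{\mu^k} \geqslant d-k+1$ on a set of positive $\mu^k$--measure, contradicting the dimension count above. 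The case $k = 0$ is vacuous.

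I would then derive \Cref{charmuk:statement} by proving $\max_B \min_C \geqslant m_k \mu^k(A)$ and $\min_C \max_B \leqslant m_k \mu^k(A)$; the sandwich inequality $\max_B \min_C \leqslant \min_C \max_B$ holds automatically. For the lower bound, the choice $B = B_k^*$ yields, for any \sDC{k-1} set $C$,
\begin{align*}
\mu(A \cap B_k^* \setminus C)
\geqslant m_k \mu^k(A \cap B_k^* \setminus C)
\geqslant m_k \left[\mu^k(A) - \mu^k(C)\right]
= m_k \mu^k(A),
\end{align*}
using both concentration properties. For the upper bound, the canonical choice $C^* \coloneqq \bigcup_{j < k} B_j^*$ is \sDC{k-1}; splitting $\mu(A \cap B \setminus C^*)$ along $\mu = \sum_j m_j \mu^j$, the $j < k$ contributions vanish (each $\mu^j$ sits on $B_j^* \subset C^*$), the $j > k$ contributions vanish ($B$ is \sDC{k} $\subset$ \sDC{j-1} and $\mu^j$ gives 0 mass to such sets), leaving only the $j = k$ term, bounded by $m_k \mu^k(A)$. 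In both arguments the extremal sets are attained, so $\min$ and $\max$ are meaningful.

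The main subtlety I expect will lie in the first step, where applying the preparatory results to $\mu^k$ rather than $\mu$ hinges on the fact that $D^{\Tan}_{\mu^k}$ inherits the expected dimension $d-k$ via \Cref{res:restr:Tan} and the construction in the proof of \Cref{res:decomp}. Once this dimension count is secured, everything else is a direct computation, and the uniqueness of the decomposition mentioned after the theorem follows as an immediate corollary of the explicit formula \Cref{charmuk:statement}.
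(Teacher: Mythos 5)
Your proposal is correct and follows essentially the same route as the paper: it applies \Cref{res:idenmuk:concentration} and \Cref{res:idenmuk:lowerdim} to each $\mu^k$ via the dimension $d-k$ of the section characterizing $\Tan_{\mu^k}^0$ from \Cref{res:decomp}, and then obtains \Cref{charmuk:statement} by splitting $\mu(A\cap B\setminus C)$ along $\mu=\sum_j m_j\mu^j$ with the extremal choices $B$ a concentration set of $\mu^k$ and $C$ containing the concentration sets of the $\mu^j$, $j<k$. Your explicit justification of the monotonicity \sDC{j}~$\subset$~\sDC{\ell} and the sandwich $\max\min\leqslant\min\max$ are only presentational variations on the paper's computation.
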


\begin{proof}
	For each $k$ such that $m_k > 0$, the Grassmannian section $D_k$ characterizing $\Tan_{\mu^k}^0$ has images of dimension $d-k$. Hence, whenever $k < d$, \Cref{res:idenmuk:concentration} implies that $\mu^k$ is concentrated on a \sDC{k} set. If $k = d$, the only \sDC{d} set is $\Sp$, and the statement is vacuous. For $k = 0$, we get that $\mu^0$ is countable. Moreover, if $k > 0$, then $\mu^k$ must give 0 mass to any \DC{k-1} set, otherwise \Cref{res:idenmuk:lowerdim} would imply that $\dim D_k \geqslant d-(k-1) = d-k+1$ on a set of positive $\mu^k-$measure. For each $k$ such that $m_k > 0$, let $A_k$ be a measurable \sDC{k} set on which $\mu^k$ is concentrated, and define $A_k = \emptyset$ if $m_k = 0$.
	
	Let $k \in \llbracket 0,d\rrbracket$, and $A,B,C \subset \mathbb{R}^d$ be measurable such that $B$ is \sDC{k} and $C$ is \sDC{k-1}. Since $\mu^{j}$ gives 0 mass to any \sDC{j-1} set, it gives 0 mass to any \sDC{$\ell$} set for $\ell \leqslant j-1$. Then
	\begin{align}\label{charmuk:sumformula}
		\mu(A \cap B \setminus C)
		= \sum_{j=0}^d m_j \mu^j(A \cap B \setminus C)
		= \sum_{j=0}^{k-1} m_j \mu^j(A \cap B \setminus C) + m_k \mu^k(A \cap B).
	\end{align}
	In particular, $\mu(A \cap B \setminus C) \geqslant m_k \mu^k(A \cap B)$ for any such $C$, with equality if $C$ is chosen as a \sDC{k-1} set containing $\bigcup_{j \in \llbracket 0,k-1\rrbracket} A_j$. We get that 
	\begin{align*}
		\min_{\substack{\vphantom{\int} C \text{ \sDC{k-1}} \\ \text{measurable }}} \mu(A \cap B \setminus C)
		= m_k \mu^k(A \cap B).
	\end{align*}
	Since $m_k \mu^k(A \cap B) \leqslant m_k \mu^k(A)$, with equality if $B = A_k$, we can take the maximum over \sDC{k} sets $B$ to obtain \Cref{charmuk:statement}. Using that $\mu^j(A \cap B \setminus C) \leqslant \mu^j(B \setminus C)$, we can also first take the maximum over $A$ in \Cref{charmuk:sumformula}, then the minimum over $C$, to obtain the same result.	
\end{proof}

\filler{As a consequence of the explicit formula \Cref{charmuk:statement}, the measures $\mu^k$ in \Cref{res:decomp} are uniquely defined (if $m_k > 0$). For $k = 0$, the measure $\mu^0$ is countable, and collects the atoms of $\mu$; for $k = d$, the measure $\mu^d$ gives 0 mass to any \DC{d-1} set, and is the transport-regular part of $\mu$. Let us detail some examples.
\begin{enumerate}[label={\emph{Example \Alph*.\ }},ref={\Alph*},leftmargin=0em,itemindent=5.9em]
\item\label{ex:dim1} In dimension one, the decomposition reduces to $\mu = m_0 \mu^0 + m_1 \mu^1$, where $\mu^0$ is supported on a \sDC{0} set and $\mu^1$ gives 0 mass to \sDC{0} sets. Since \sDC{0} sets are exactly countable sets, we get that $\mu^0$ is the (normalized) atomic part of $\mu$, while $\mu^1$ is its (normalized) diffuse part. Note that the Cantor part is classified in the same component as the absolutely continuous part of $\mu$. 

\item\label{ex:C2} If $\mu = \mathcal{H}^k \resmes \mathcal{M}$ for $\mathcal{H}^k$ the Hausdorff measure of dimension $k$, and $\mathcal{M}$ some $k-$dimensional $\mathcal{C}^2-$manifold with $\mathcal{H}^k(\mathcal{M}) = 1$, then $\mu = \mu^k$. Indeed, covering $\mathcal{M}$ by countably many  $\mathcal{C}^2$ charts, and using that $\mathcal{C}^2$ maps are DC \cite{hiriart-urrutyGeneralizedDifferentiabilityDuality1985}, one sees that $\mathcal{M}$ is \sDC{k}. Moreover, \DC{j} sets for $j < k$ are negligible for $\mathcal{H}^k$ (as non-differentiability sets of convex functions from $\mathbb{R}^k$ to $\mathbb{R}$). The fact that in this case, the centred elements of the geometric tangent cone are concentrated on the normal directions to $\mathcal{M}$ has been observed by Lott~\cite{lottTangentConesWasserstein2016}, and will be generalized to any $\mu \in \Psp_2(\Sp)$ in the next section. 

\item\label{ex:notC1} The regularity in the previous example cannot be weakened to $\mathcal{C}^1$. We sketch a counterexample in dimension $d=2$, taking inspiration from \cite{zajicekDifferentiationConvexFunctions1979,juilletDisplacementInterpolationMeasures2011}. Let $f \in \mathcal{C}([0,1];[0,1])$ be continuous and nowhere approximately differentiable, given for instance by \cite[Chap.~6]{kharazishviliStrangeFunctionsReal2006}. Define $F : x \mapsto \int_{y = 0}^x f(y) dy$ and $\mu \coloneqq (id,F)_{\#} \mathcal{L}_{[0,1]}$, with $\mathcal{L}$ the Lebesgue measure. We claim that $\mu = \mu^2$, i.e. $D^{\Sol} \equiv \mathbb{R}^2$. 

By the above characterization, it suffices to prove that $\mu$ gives 0 mass to any \DC{1} set. By \cite{anzellottiCkrectifiableSets1994}, any \DC{1} set can be covered by countably many $\mathcal{C}^2$ manifolds up to a set of $\mathcal{H}^1-$measure 0. As $\mu$ is absolutely continuous with respect to $\mathcal{H}^1$, it is enough to show that $\mu(A) = 0$ for any $\mathcal{C}^2$ manifold $A$. Using a parametrization of $A$ by graphs of $\mathcal{C}^2$ functions, this would be implied by the fact that $\mathcal{L}_{[0,1]}(\{F = \varphi\}) = 0$ for any $\varphi \in \mathcal{C}^2([0,1];\mathbb{R})$. Now, if $\{F = \varphi\}$ has positive Lebesgue measure, one shows that $f(x) = \varphi'(x)$ for any density point $x \in \{F = \varphi\}$. Therefore $\mathcal{L}(\{f = \varphi'\}) > 0$, and since $\varphi' \in \mathcal{C}^1$, the function $f$ admits $\varphi''$ as an approximate differential at any density point of $\{f = \varphi'\}$, contradicting the choice of $f$.
\end{enumerate}
}

\begin{rem}[Difference with the decomposability bundle]
	In \cite{albertiDifferentiabilityLipschitzFunctions2016}, the authors introduce the decomposability bundle of a measure $\mu$ as a Grassmannian section constructed (roughly) as follows: consider every possible representation of $\mu$ as a superposition of 1-Hausdorff measures supported on 1-rectifiable sets, and let $D^{\text{AM}}$ be the essential union of the approximate tangent planes to each piece. If $\mu = (id,F)_{\#} \mathcal{L}_{[0,1]}$ for $F \in \mathcal{C}^1([0,1];\mathbb{R})$, then $D^{\text{AM}}(x,F(x))$ reduces to the classical tangent plane $\mathbb{R} \cdot (1,F'(x))$. This differs from $D^{\Sol}$ in example~\Cref{ex:notC1}. Hence the following question: can $D^{\text{AM}}$ be linked to a Wasserstein tangent cone for another cost?
\end{rem}

%%%%%%%%%%%%%%%%%%%%%%%%%%%%%%%%%%%%%%%%%%%%%%%%%%%%%%%%%%%%
\subsection{Relations with other notions of tangency}\label{sec:tangency}
%%%%%%%%%%%%%%%%%%%%%%%%%%%%%%%%%%%%%%%%%%%%%%%%%%%%%%%%%%%%

\filler{The reader could rightfully complain that so far, no information has been provided concerning the direction of the Grassmannian section $D^{\Sol}$ in \Cref{res:charTanSol0}. We now correct this in two successive steps; first, if $\mu$ is concentrated on a \sDC{k} set $A$, we show that the tangent planes to $A$ are aligned with $D^{\Sol}$ at least $\mu-$almost everywhere. The definition of tangent plane involved is rather strong, and might be useful in applications; however, it uses information coming from a particular set over which $\mu$ is concentrated, and lacks an ``intrinsic'' character. Therefore, in a second step, we show that Preiss tangent measures are concentrated on $D^{\Sol}$ for $\mu-$almost every point.}

\filler{In this section, we consider only $k \in \llbracket 1,d-1\rrbracket$, since in the extremal cases $k = 0$ and $k = d$, the application $D^{\Sol}$ is either $\{0\}$ or the whole space $\mathbb{R}^d$}. 

\begin{prop}[$\mu^k$ aligns with $D^{\Sol}_k$]\label{res:PeqD}
	Let $k \in \llbracket 1,d-1\rrbracket$. Assume that $\mu \in \Psp_2(\Sp)$ is concentrated on a \sDC{k} set $A$ and gives 0 mass to any \sDC{k-1} set. Then for $\mu-$almost every point $x$, $A$ admits a tangent plane $P(x)$ in the sense of \Cref{def:tansDCk}, and $P(x) = D^{\Sol}(x)$. 
\end{prop}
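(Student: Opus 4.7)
The plan proceeds in three steps. First, I would check that $P(x)$ is well-defined $\mu$-almost everywhere: since $A$ is covered by countably many \DC{k} sets $(A_n)_{n \in \mathbb{N}}$, \Cref{res:exiTansDCk} produces a \sDC{k-1} set $B$ such that every $x \in A \setminus B$ admits a tangent plane $P(x)$ in the sense of \Cref{def:tansDCk}; the hypothesis that $\mu$ gives zero mass to \sDC{k-1} sets forces $\mu(B) = 0$. Each such $P(x)$ is $k$-dimensional, as the image of $\nabla \Phi_n(x)$ for a parametrization $\Phi_n$ that contains the identity on $k$ coordinates.

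Next, I would reduce the equality to a single inclusion using a dimension count. Feeding the hypotheses into \Cref{res:decomp} and the explicit formula of \Cref{res:charmuk}: the components $\mu^\ell$ with $\ell < k$ vanish because they are concentrated on \sDC{\ell} (hence \sDC{k-1}) sets which $\mu$ ignores, and those with $\ell > k$ vanish because $\mu$ itself is concentrated on a \sDC{k} (hence \sDC{\ell-1}) set while $\mu^\ell$ gives zero mass to such sets. Therefore $\mu = \mu^k$, so $\dim D^{\Sol}(x) = k$ (equivalently $\dim D^{\Tan}(x) = d-k$) at $\mu$-a.e.\ $x$, matching $\dim P(x) = k$. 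It then suffices to prove the single inclusion $P(x)^\perp \subset D^{\Tan}(x)$ at $\mu$-a.e.\ $x$.

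For this remaining inclusion, I would build tangent fields with support in $P(x)^\perp$, piece by piece. Fix $n$ and apply \Cref{res:particularphi} to $A_n$ to obtain a convex function $\phi_n$ with $A_n \subset J_k(\phi_n)$ together with measurable selections $f_k^n, \ldots, f_d^n \in \partial \phi_n$ that are pairwise at distance at least one on $A_n$. Mimicking the construction of \Cref{res:idenmuk:lowerdim}, the measure field $\xi_n$ obtained by averaging the plans $(id, f_j^n - id)_\# \mu$ is the velocity of a geodesic, since $(\pi_x, \pi_x + \pi_v)_\# \xi_n$ lives in the graph of $\partial \phi_n$ and is therefore cyclically monotone; by \Cref{rem:barycentreddecomp} its centred part $\xi_n^0$ belongs to $\Tan_\mu^0$, and \Cref{res:charTanSol0} ensures that $(\xi_n^0)_x$ is supported in $D^{\Tan}(x)$ at $\mu$-a.e.\ $x$. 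The support of $(\xi_n^0)_x$ consists of finitely many vectors $f_j^n(x) - x - b_n(x)$ whose pairwise differences $(f_j^n - f_{j'}^n)(x)$ lie in $\vspan\{\partial_x \phi_n - p\}$. By \Cref{res:affineortho} applied to the pair $(\phi_n, A_n)$ at a generic $x \in A_n$, this span coincides with $P(x)^\perp$. Moreover, the explicit form of \Cref{res:particularphi} yields $d-k$ linearly independent such differences (their last $d-k$ coordinates realize the canonical basis of $\mathbb{R}^{d-k}$ up to permutation), so these vectors span $P(x)^\perp$; since $D^{\Tan}(x)$ is a vector space containing them, I obtain $P(x)^\perp \subset D^{\Tan}(x)$ at $\mu$-a.e.\ $x \in A_n$. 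Taking the countable union over $n$ and invoking the dimension match closes the argument.

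The main obstacle, in my view, is that an arbitrary element of $\Tan_\mu^0$ is associated to a Kantorovich potential $\psi$ which need not satisfy the hypothesis $A \subset J_k(\psi)$ of \Cref{res:affineortho}, so a direct proof of the dual inclusion $D^{\Tan}(x) \subset P(x)^\perp$ by analyzing arbitrary potentials seems out of reach. The way around this is to prove the reverse inclusion $P(x)^\perp \subset D^{\Tan}(x)$ using canonical tangent fields built from the potentials $\phi_n$ of \Cref{res:particularphi}, whose subdifferentials were tailored to be explicitly large on each \DC{k} piece of $A$ and therefore fit the hypotheses of \Cref{res:affineortho}; a dimension count then upgrades this to the equality.
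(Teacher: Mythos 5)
Your proposal is correct and follows essentially the same route as the paper's proof: existence of the tangent plane $P(x)$ off a $\mu$-null \sDC{k-1} set via \Cref{res:exiTansDCk}, tangent fields built from the special potentials of \Cref{res:particularphi} whose centred parts are concentrated on $\graph D^{\Tan}$, \Cref{res:affineortho} (after discarding the \sDC{k-1} set $J_{k-1}(\phi_n)$, which is what your ``generic $x$'' must mean) to identify the span of the differences $f_j - f_{j'}$ with $P(x)^{\perp}$, and the dimension count $\dim D^{\Tan} = d-k$ coming from $\mu = \mu^k$. The only cosmetic differences are that you make the reduction $\mu = \mu^k$ and the resulting one-inclusion-plus-dimension argument explicit, where the paper asserts $\dim D^{\Tan}(x) = d-k$ directly, and that you average all $d-k+1$ selections into a single field instead of pairing each $f_j$ with $f_k$.
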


\begin{proof}
	Let $(A_n)_n$ be a cover of $A$ by \DC{k} sets. By \Cref{res:exiTansDCk}, there exists a \sDC{k-1} set $B_0 \subset A$ such that $A$ admits a tangent plane $P = P(x)$ for any $x \in A \setminus B_0$, in the sense given to it in \Cref{def:tansDCk}. Our aim is to show that $P$ is orthogonal to $D^{\Tan}$ $\mu-$almost everywhere. 
	
	The beginning of the argument follows that of \Cref{res:idenmuk:lowerdim}. 
	For each $A_n$, let $\varphi_n : \Sp \to \mathbb{R}$ be a convex function given by \Cref{res:particularphi}, i.e. such that $A_n \subset J_k(\varphi_n)$ and $\partial \varphi_n$ admits $d-k+1$ measurable selections $f_k,\cdots,f_{d}$ that are at pairwise distance at least one over $A_n$. Then, for each $j \in \llbracket k+1,d\rrbracket$, the measure field $\frac{1}{2} (id,f_k-id)_{\#} \mu + \frac{1}{2} (id,f_j-id)_{\#} \mu$ induces a geodesic since $\varphi$ is convex, so that by \Cref{rem:barycentreddecomp}, its projection on centred measure fields
	\begin{align*}
		\frac{1}{2} \left(id,\frac{f_k - f_j}{2}\right)_{\hspace{-3pt}\#} \mu + \frac{1}{2} \left(id,\frac{f_j-f_k}{2}\right)_{\hspace{-3pt}\#} \mu
	\end{align*}
	is tangent. Since centred tangent measure fields are concentrated on $\graph D^{\Tan}$ by \Cref{res:decomp}, there exists a $\mu-$negligible set $B^{(n)}_0 \subset A$ such that $f_j(x) - f_k(x) \in D^{\Tan}(x)$ for any $x \in A \setminus B^{(n)}_0$. As $D^{\Tan}(x)$ is a subspace of dimension $d-k$, the independent vectors $f_j(x)-f_k(x)$ are spanning it. 
	
	We now use the particular choice of $\varphi_n$. Recall that for any $x \in A_n \setminus B_0$, the parametrization of $A_n$ is differentiable with differential spanning $P$. Denote $B^{(n)}_1 \coloneqq J_{k-1}(\varphi_n)$ the set of $x$ such that $\dim \partial_x \varphi_n > d-k$, which is \sDC{k-1}. By \Cref{res:affineortho}, for any $x \in A_n \setminus (B_0 \cup B^{(n)}_1)$, 
	\begin{align*}
		\vspan \left\{ f_j(x) - f_k(x) \st j \in \llbracket k+1,d\rrbracket \right\} = P(x)^{\perp}.
	\end{align*}
	Hence, for any $x \in A_n \setminus \big(B_0 \cup B^{(n)}_0 \cup B^{(n)}_1\big)$, there holds $D^{\Tan}(x) = P(x)^{\perp}$. Since $\mu$ gives 0 mass to \sDC{k-1} sets, both $B_0$ and $B^{(n)}_1$ are $\mu-$negligible. Hence $B \coloneqq \bigcup_{n \in \mathbb{N}} B_0 \cup  B^{(n)}_0 \cup B^{(n)}_1$ is a $\mu-$negligible set such that $P(x) = D^{\Tan}(x)^{\perp} = D^{\Sol}(x)$ for any $x \in A \setminus B$, as claimed.
\end{proof}

\filler{The following definition is extracted from \cite{preissGeometryMeasuresDistribution1987}, where it is shown that any measure $\mu$ admits at least one tangent measure $\nu$ at $\mu-$almost any point.}

\begin{definition}[Tangent measure in the sense of Preiss]\label{def:TanPreiss}
	Let $\mu \in \Psp(\Sp)$ and $x \in \supp \mu$. A locally finite measure $\nu \in \M_+(\Sp)$ is tangent to $\mu$ at $x$ in the sense of Preiss if there exists a vanishing sequence $(h_n)_{n \in \mathbb{N}} \subset (0,1]$ and $c > 0$ such that for any $\varphi \in \mathcal{C}_c(\Sp;\mathbb{R})$, 
	\begin{align*}
		\int_{x \in \Sp} \varphi(x) d\nu(x)
		= \lim_{n \to \infty} \frac{c}{\mu\left(\mathscr{B}(x,h_n)\right)} \int_{y \in \Sp} \varphi\left(\frac{y-x}{h_n}\right) d\mu(y).
	\end{align*}
\end{definition}

\begin{prop}\label{res:PreissT}
	Let $k \in \llbracket 1,d-1\rrbracket$. Assume that $\mu \in \Psp_2(\Sp)$ is concentrated on a \sDC{k} set $A$ and gives 0 mass to any \sDC{k-1} set. Then, for $\mu-$almost every point $x$, any tangent measure $\nu \in \M_+$ to $\mu$ at $x$ in the sense of \Cref{def:TanPreiss} is supported on $D^{\Sol}(x)$.
\end{prop}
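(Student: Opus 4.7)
The plan is to use \Cref{res:PeqD} to replace $D^{\Sol}(x)$ by the tangent plane $P(x)$ to $A$ at $\mu$-almost every $x$, and then run a blow-up argument showing that $\mu$ locally concentrates on $x + P(x)$, forcing any Preiss tangent measure $\nu$ to be supported on $P(x)$. More precisely, I would fix a Borel set $G$ of full $\mu$-measure on which $P(x) = D^{\Sol}(x)$ is well-defined (by \Cref{res:exiTansDCk,res:PeqD}), pick $x \in G$ and a Preiss tangent measure $\nu$ with scaling $(h_n)_n$ and constant $c > 0$, and reduce the goal to proving $\int \varphi \, d\nu = 0$ for every nonnegative $\varphi \in \mathcal{C}_c(\Sp)$ with compact support disjoint from $P(x)$.

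To localize, I would partition $A \subset \bigcup_m A_m$ into disjoint Borel pieces $B_m \coloneqq A_m \setminus \bigcup_{\ell < m} A_\ell$ and then restrict $G$ further to impose two conditions. The Lebesgue differentiation theorem for Radon measures on $\mathbb{R}^d$ yields
\begin{align*}
	\lim_{h \to 0} \frac{\mu(\mathscr{B}(x,h) \setminus B_{m(x)})}{\mu(\mathscr{B}(x,h))} = 0
\end{align*}
for $\mu$-almost every $x$, where $m(x)$ denotes the unique index such that $x \in B_{m(x)}$. Moreover, for $\mu$-almost every $x$ the parametrization $\Phi_{m(x)}$ is differentiable at $X_{m(x)} \coloneqq \Phi_{m(x)}^{-1}(x)$: for each $m$ the exceptional set is $\Phi_m(E_m)$ where $E_m \subset \mathbb{R}^k$ is the \sDC{k-1} non-differentiability set of the DC coordinates of $\Phi_m$, and by \Cref{rem:compsDC} $\Phi_m(E_m)$ is still \sDC{k-1} in $\mathbb{R}^d$, hence $\mu$-null by hypothesis.

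Now fix such a good $x$ and set $m = m(x)$, $X_m = \Phi_m^{-1}(x)$. Because $\Phi_m$ acts as the identity on $k$ coordinates, one has $|X - X_m| \leq |\Phi_m(X) - x|$ for every $X$; combined with differentiability and the identity $\operatorname{Im} \nabla \Phi_m(X_m) = P(x)$ from \Cref{def:tansDCk}, this yields the quantitative slab inclusion
\begin{align*}
	A_m \cap \mathscr{B}(x, r) \subset \left\{ y \in \Sp \st \mathrm{dist}(y - x, P(x)) \leq \varepsilon \, |y - x| \right\}
\end{align*}
for every $\varepsilon > 0$ and some $r = r(x, \varepsilon) > 0$. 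Given $\varphi$ supported in $\mathscr{B}(0, R)$ at positive distance $d > 0$ from $P(x)$, I would choose $\varepsilon < d/R$; then for large $n$ the support of $y \mapsto \varphi((y-x)/h_n)$ misses $B_m$, which gives
\begin{align*}
	\int_{\Sp} \varphi\left(\tfrac{y-x}{h_n}\right) d\mu(y) \leq \mu\big(\mathscr{B}(x, R h_n) \setminus B_m\big).
\end{align*}
Dividing by $\mu(\mathscr{B}(x, h_n))$ and using the density estimate together with the bound $\mu(\mathscr{B}(x, R h_n))/\mu(\mathscr{B}(x, h_n)) = O(1)$ (which follows from local finiteness of $\nu$ tested against a cutoff equal to $1$ on $\overline{\mathscr{B}}(0, R)$), the right-hand side vanishes in the limit, so that $\int \varphi \, d\nu = 0$.

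The main difficulty is to have three almost-everywhere conditions — the equality $P(x) = D^{\Sol}(x)$, the Lebesgue density of $B_{m(x)}$ at $x$, and the differentiability of $\Phi_{m(x)}$ at $X_{m(x)}$ — hold simultaneously at the same $x$ on a set of full $\mu$-measure. Each single exceptional set is either directly \sDC{k-1} or the $\Phi_m$-image of such a set (countably many images, one per $m$), so the hypothesis that $\mu$ charges no \sDC{k-1} set guarantees their countable union remains $\mu$-negligible. Once this is arranged, the blow-up computation above is routine.
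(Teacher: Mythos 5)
Your proposal is correct and follows essentially the same route as the paper's proof: reduce $D^{\Sol}(x)$ to the tangent plane $P(x)$ via \Cref{res:PeqD}, control the ratio $\mu(\mathscr{B}(x,Rh_n))/\mu(\mathscr{B}(x,h_n))$ by testing $\nu$ against a cutoff, and split the rescaled mass into the part on the DC pieces (killed by differentiability of the parametrizations at $x$) and the remainder (killed by a $\mu$-density point argument). The only difference is bookkeeping — you disjointify the cover and use density of the single piece $B_{m(x)}$, while the paper uses density points of the increasing unions $A^{\cup n}$ — which changes nothing essential.
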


\begin{proof}
	Let again $(A_n)_n$ be a cover of $A$ by \DC{k} sets. By \Cref{res:PeqD}, there exists a $\mu-$negligible set $B_0 \subset A$ such that for all $x \notin B_0$, $A$ admits $D^{\Sol}(x)$ as a tangent plane in the sense of \Cref{def:tansDCk}. For each $n$, let $A^{\cup n} \coloneqq \bigcup_{m \leqslant n} A_m$. Then $A = \bigcup_{n \in \mathbb{N}} A^{\cup n}$. By the density theorem \cite[\nopp 2.9.13]{federerGeometricMeasureTheory1996}, there exists a $\mu-$null set $B^{(n)} \subset A^{\cup n}$ such that every $x \in A^{\cup n} \setminus B^{(n)}$ is a density point of $A^{\cup n}$, i.e. 
	\begin{align*}
		\lim_{h \searrow 0} \frac{\mu\left((A^{\cup n})^c \cap \mathscr{B}(x,h)\right)}{\mu(\mathscr{B}(x,h))} = 0.
	\end{align*}
	Let $B_1 \coloneqq \bigcup_{n \in \mathbb{N}} B^{(n)}$. Then $B_1$ is $\mu-$negligible, and for any $x \in A \setminus B_1$, there exists $n_x \in \mathbb{N}$ such that $x$ is a density point of $A^{\cup n_x}$. 
	
	Let $x \in A \setminus (B_0 \cup B_1)$ and $\nu \in \M_+$ be a Preiss tangent measure at $x$, i.e. the limit in $C_c'$ of $\frac{c}{\mu\left(\mathscr{B}(x,h_n)\right)} (\frac{id - x}{h_n})_{\#} \mu$ for some $c > 0$ and vanishing sequence $(h_n)_{n \in \mathbb{N}} \subset (0,1]$. Denote $P = D^{\Sol}(x)$, and for any $\varepsilon > 0$, let $P^{\varepsilon} \coloneqq \left\{ y \in \Sp \st d(y,P) \leqslant \varepsilon \right\}$. To show that $\supp \nu \subset P$, it is enough to prove that for any $0 < \varepsilon < R$, there holds $\nu\left(\mathscr{B}(0,R) \setminus P^{\varepsilon}\right) = 0$. Let then $0 < \varepsilon < R$. We first note that 
	\begin{align}\label{PreissT:boundquot}
		\limsup_{n \to \infty} c\, \frac{\mu\left(\mathscr{B}(x,R h_n)\right)}{\mu\left(\mathscr{B}(x,h_n)\right)} < \infty.
	\end{align}
	Indeed, let $\psi \in \mathcal{C}(\Sp;[0,1])$ be supported in $\mathscr{B}(0,2R)$ and identically equal to one over $\overline{\mathscr{B}}(0,R)$. Then 
	\begin{align*}
		c \, \frac{\mu\left(\mathscr{B}(x,R h_n)\right)}{\mu\left(\mathscr{B}(x,h_n)\right)}
		\leqslant \frac{c}{\mu(\mathscr{B}(x,h_n))} \int_{y \in \Sp} \psi\left(\frac{y-x}{h_n}\right) d\mu(x)
		\underset{n \to \infty}{\longrightarrow} \int \psi d\nu 
		\leqslant \nu(\mathscr{B}(0,2R))
		< \infty.
	\end{align*}
	Let now $\varphi \in \mathcal{C}(\Sp;[0,1])$ be supported in $\mathscr{B}(0,R) \setminus P^{\varepsilon}$. By definition, 
	\begin{align}\label{PreissT:errors}
		\int \varphi d\nu
		&= \lim_{n \to \infty} \frac{c}{\mu\left(\mathscr{B}(x,h_n)\right)} \int_{y \in \Sp} \varphi\left(\frac{y - x}{h_n}\right) d\mu(y) 
		\leqslant \liminf_{n \to \infty} c \, \frac{\mu\left(\mathscr{B}(x, R h_n) \setminus (x+P)^{\varepsilon h_n}\right)}{\mu\left(\mathscr{B}(x,h_n)\right)} \notag \\
		&\leqslant \liminf_{n \to \infty} c \, \frac{\mu\left(A^{\cup n_x} \cap \mathscr{B}(x, R h_n) \setminus (x+P)^{\varepsilon h_n}\right)}{\mu\left(\mathscr{B}(x,h_n)\right)} + c \, \frac{\mu\left((A^{\cup n_x})^c \cap \mathscr{B}(x, R h_n)\right)}{\mu\left(\mathscr{B}(x,h_n)\right)}.
	\end{align}
	On the one hand, as $x \notin B_0$, each \DC{k} sets $A_{m}$ for $m \in \llbracket 0, n_x \rrbracket$ admits $P$ as a tangent plane in the sense of \Cref{def:tansDCk}, which implies that the parametrization $A_m = \Phi_m(\mathbb{R}^k)$ is differentiable at $x$ and satisfies $P = \text{Im} \nabla \Phi_m(x)$. Hence, for each $m \in \llbracket 0,n_x\rrbracket$, there exists $N_{x,m}$ large enough so that 
	\begin{align*}
		A_{m} \cap \mathscr{B}(x, R h_n) \subset (x+P)^{\varepsilon h_n}
		\qquad \forall n \geqslant N_{x,m}.
	\end{align*}
	For $N \geqslant \max_{m \in \llbracket 0,n_x\rrbracket} N_{x,m}$, the first term in \Cref{PreissT:errors} vanishes. On the other hand, 
	\begin{align*}
		c\, \frac{\mu\left((A^{\cup n_x})^c \cap \mathscr{B}(x, R h_n)\right)}{\mu\left(\mathscr{B}(x,h_n)\right)}
		= \frac{\mu\left((A^{\cup n_x})^c \cap \mathscr{B}(x, R h_n)\right)}{\mu\left(\mathscr{B}(x,R h_n)\right)} \times c\, \frac{\mu\left(\mathscr{B}(x,R h_n)\right)}{\mu\left(\mathscr{B}(x,h_n)\right)}.
	\end{align*}
	Since $x$ is a density point of $A^{\cup n_x}$, the first term of the product goes to 0 when $n \to \infty$. The second term being bounded by \Cref{PreissT:boundquot}, both terms in \Cref{PreissT:errors} are converging to 0 when $n \to \infty$, and $\int \varphi d\nu = 0$. This being valid for any $\varphi \in \mathcal{C}(\Sp;[0,1])$ supported in $\overline{\mathscr{B}}(0,R) \setminus P^{\varepsilon}$, we conclude that $\nu\left(\mathscr{B}(0,R) \setminus P^{\varepsilon}\right) = 0$. 
\end{proof}

\filler{It may happen that the inclusion $\supp \nu \subset D^{\Sol}(x)$ is strict. For instance, in example~\Cref{ex:notC1} at the end of \Cref{sec:concentration}, the measure $(id,F)_{\#} \mu$ with $F \in \mathcal{C}^1$ admits $(id,F'(x) \cdot id)_{\#} \mathcal{L}_{\mathbb{R}}$ as a unique Preiss tangent measure at $(x,F(x)) \in \mathbb{R}^2$. In particular, $\nu$ is supported on a line, but $D^{\Sol}$ is equal to $\mathbb{R}^2$ for $\mu-$almost every point.}

\paragraph{Acknowledgments}

The author would like to thank Adolfo Arroyo-Rabasa for relevant literature pointing. 

%%%%%%%%%%%%%%%%%%%%%%%%%%%%%%%%%%%%%%%%%%%%%%%%%%%%%%%%%%%%%%%%%%%%%%%%%%%%%%%%%%%
%%%%%%%%%%%%%%%%%%%%%%%%%%%%%%%%%%%%%%%%%%%%%%%%%%%%%%%%%%%%%%%%%%%%%%%%%%%%%%%%%%%
\section*{Appendix}\label[appx]{sec:appendix}
%%%%%%%%%%%%%%%%%%%%%%%%%%%%%%%%%%%%%%%%%%%%%%%%%%%%%%%%%%%%%%%%%%%%%%%%%%%%%%%%%%%
%%%%%%%%%%%%%%%%%%%%%%%%%%%%%%%%%%%%%%%%%%%%%%%%%%%%%%%%%%%%%%%%%%%%%%%%%%%%%%%%%%%

\begin{proof}[Proof of \Cref{res:projConvCone}]
	The existence and uniqueness of $\pi_C^{\mu} \xi$ follow verbatim the steps of \cite[Proposition 4.30]{gigliGeometrySpaceProbability2008}, mimicking the Hilbertian case. We show that for any $\alpha \in \Gamma_{\mu,o}(\xi,\pi_C^{\mu} \xi)$, the element $\gamma \coloneqq (\pi_x,\pi_v-\pi_w)_{\#} \alpha$ belongs to $C^{\perp}$. Let $\zeta \in C$ be arbitrary, and $\beta = \beta(\pi_x,\pi_u,\pi_v,\pi_w) \in \Gamma_{\mu}(\xi,\pi_C^{\mu} \xi,\zeta)$ with $(\pi_x,\pi_u,\pi_v)_{\#} \beta = \alpha$. By horizontal convexity, $(\pi_x,(1-\varepsilon) \pi_v + \varepsilon \pi_w)_{\#} \beta \in C$. Then
	\begin{align*}
		W_{\mu}^2(\xi,\pi_C^{\mu} \xi)
		&\leqslant W_{\mu}^2\left(\xi,(\pi_x,(1-\varepsilon) \pi_v + \varepsilon \pi_w)_{\#} \beta\right) 
		\leqslant \int_{(x,v,w) \in \T^2\Sp} \left|u - (1-\varepsilon) v - \varepsilon w\right|^2 d\beta \\
		&= W_{\mu}^2(\xi,\pi_C^{\mu} \xi) + 2 \varepsilon \int \left<u-v,v-w\right> d\beta + \varepsilon^2 \int |v-w|^2 d\beta.
	\end{align*}
	Taking $\varepsilon$ to 0, we get $0 \leqslant \int \left<u-v,v-w\right> d\beta$. Choosing in particular $\zeta = r \cdot \pi_C^{\mu} \xi \in C$ for $r > 0$, and $\beta = (\pi_x,\pi_v,\pi_w,r \pi_w)_{\#}\alpha$, we get that $\int \left<u-v,v\right> d\alpha = 0$ for any $\alpha \in \Gamma_{\mu,o}(\xi,\pi_C^{\mu} \xi)$. Therefore, in the general case of $\zeta \in C$,
	\begin{align*}
		\left<\gamma, \zeta\right>_{\mu}
		= \sup_{\substack{\beta \in \Gamma_{\mu}(\xi,\pi_C^{\mu} \xi,\zeta) \\ (\pi_x,\pi_u,\pi_v)_{\#} \beta = \alpha}} \int_{(x,u,v,w)} \left<u - v, w\right> d \beta
		\leqslant \int_{(x,u,v)} \left<u-v,v\right> d\alpha
		= 0.
	\end{align*}
	As the opposite inequality holds by \Cref{res:centredislocal}, we deduce that $\gamma \in C^{\perp}$.
	
	%Taking $- \zeta \in C$ in place of $\zeta$, \intern{we get} $- \left<\gamma, \zeta\right>_{\mu} \leqslant \left<\gamma,-\zeta\right>_{\mu} \leqslant 0$, from which we deduce that $\gamma \in C^{\perp}$. 
	Now, for any $\zeta \in C^{\perp}$, we introduce artificially $\pi_C^{\mu} \xi$ in the definition of $W_{\mu}^2(\xi,\zeta)$ to get
	\begin{align*}
		W_{\mu}^2(\xi,\zeta) 
		&= \inf_{\substack{\beta \in \Gamma_{\mu}(\xi,\pi_C^{\mu} \xi, \zeta) \\ (\pi_x,\pi_u,\pi_v)_{\#} \beta = \alpha}} \int_{(x,u,v,w)} \left|v + (u - v - w)\right|^2 d\beta \\
		&= \|\pi_C^{\mu} \xi\|_{\mu}^2 + \inf_{\substack{\beta \in \Gamma_{\mu}(\xi,\pi_C^{\mu} \xi, \zeta) \\ (\pi_x,\pi_u,\pi_v)_{\#} \beta = \alpha}} 2 \int_{(x,u,v,w)} \left<v, u - v - w\right> d\beta + |(u-v) - w|^2 d\beta \\
		&\geqslant W_{\mu}^2(\xi,\gamma) + \inf_{\substack{\beta \in \Gamma_{\mu}(\xi,\pi_C^{\mu} \xi, \zeta) \\ (\pi_x,\pi_u,\pi_v)_{\#} \beta = \alpha}} 2 \int_{(x,u,v,w)} \left<v, - w\right> d\beta + W_{\mu}^2(\gamma, \zeta).
	\end{align*}
	Here we used respectively that $(\pi_x,\pi_v,\pi_v - \pi_w)_{\#} \alpha$ is a transport plan between $\xi$ and $\gamma$, that $\int \left<v,u-v\right> d\alpha = 0$, and that $(\pi_x,\pi_u-\pi_v,\pi_w)_{\#} \beta$ is a transport plan between $\gamma$ and $\zeta$. The middle term is exactly $- 2 \left<\pi_C^{\mu} \xi, \zeta\right>_{\mu}$, which vanishes since $\pi_C^{\mu} \xi \in C$ and $\zeta \in C^{\perp}$. Therefore, for any $\zeta \in C^{\perp}$, there holds $W_{\mu}^2(\xi,\zeta) \geqslant W_{\mu}^2(\xi,\gamma) + W_{\mu}^2(\gamma,\zeta)$, and $\gamma$ must be the metric projection of $\xi$ over $C^{\perp}$. 
	
	If now $\zeta \in C$, we have for any $\beta \in \Gamma_{\mu}(\xi,\pi_C^{\mu} \xi, \zeta)$ such that $(\pi_x,\pi_u,\pi_v)_{\#} \beta = \alpha$ that $\int \left<u - v, w\right> d\beta \leqslant \left<\gamma, \zeta\right>_{\mu} = 0$. We deduce that 
	\begin{align*}
		\left<\xi, \zeta\right>_{\mu}
		= \sup_{\substack{\beta \in \Gamma_{\mu}(\xi,\pi_C^{\mu} \xi, \zeta) \\ (\pi_x,\pi_u,\pi_v)_{\#} \beta = \alpha}} \int_{(x,u,v,w)} \left<u - v + v, w\right> d\beta
		\leqslant 0 + \sup_{\substack{\beta \in \Gamma_{\mu}(\xi,\pi_C^{\mu} \xi, \zeta) \\ (\pi_x,\pi_u,\pi_v)_{\#} \beta = \alpha}} \int_{(x,u,v,w)} \left<v, w\right> d\beta = \left<\pi_C^{\mu} \xi, \zeta\right>_{\mu}.
	\end{align*}
	If, in addition, $\zeta = -1 \cdot \zeta$, then 
	\begin{align*}
		\int \left<u - v, w\right> d\beta
		= - \int \left<u - v, - w\right> d\beta
		\geqslant - \left<\gamma, - 1 \cdot \zeta\right>_{\mu}
		= - \left<\gamma, \zeta\right>_{\mu}
		= 0,
	\end{align*}
	and equality holds in the penultimate line, so that $\left<\xi, \zeta\right>_{\mu} = \left<\pi_C^{\mu} \xi, \zeta\right>_{\mu}$.
\end{proof}

\begin{proof}[Proof of \Cref{res:extensionOpt}]
	Let $M \geqslant 0$ be such that $\supp \nu \subset \overline{\mathscr{B}}(0,M)$, and $(x_0,v_0) \in \supp (\pi_x,\pi_x+\pi_v)_{\#} \eta$. The formula
	\begin{align*}
		\varphi(x) &\coloneqq \sup \left\{ \sum_{i=0}^n |x_i - y_i|^2 - \sum_{i=0}^{n-1} |x_{i+1} - y_{i}|^2 - |x-y_n|^2 \st n \in \mathbb{N}, \ (x_i,y_i)_{i=1}^n \subset \supp (\pi_x,\pi_x+\pi_v)_{\#} \eta \right\}
	\end{align*}
	defines a semiconvex function from $\Sp$ to $\mathbb{R} \cup \{\infty\}$ with the property that $\varphi^c(y) - \varphi(x) = |x-y|^2$ for $(\pi_x,\pi_x+\pi_v)_{\#} \eta-$almost $(x,y)$ \cite[Theorem 5.10]{villaniOptimalTransport2009}. The support of $(\pi_x,\pi_x+\pi_v)_{\#} \eta$ is cyclically monotone, so taking $x = x_0$, we get that $\varphi(x_0) \leqslant 0$.
	Since each $y_i$ appearing in the supremum is contained in $\supp (\pi_x+\pi_v)_{\#} \eta = \supp \nu \subset \overline{\mathscr{B}}(0,M)$, one has
	\begin{align*}
		\varphi(x) - \varphi(x_0)
		&\leqslant \sup \left\{ -|x-y_n|^2 + |x_0-y_n|^2 \st \exists x_n \in \Sp \text{ with } (x_n,y_n) \in \supp (\pi_x,\pi_x+\pi_v)_{\#} \eta \right\} \\
		&\leqslant |x_0|^2 + 2 M |x_0 - x| - |x|^2.
	\end{align*}
	The function $\varphi$ is lower bounded by $-|x-y_0|^2 + |y_0-x_0|^2$ by definition, so locally bounded, hence locally Lipschitz since it is semiconvex. Therefore the set-valued subdifferential application $x \mapsto \partial_x \left(\varphi-\frac{|\cdot|^2}{2}\right)$ is compact-valued, and upper semicontinuous in the set-valued sense by \cite[Corollary 24.5.1]{rockafellarConvexAnalysis1970}. By the selection theorem \cite[\nopp 18.13]{aliprantisInfiniteDimensionalAnalysis2006}, it admits a measurable selection $f : \Sp \to \T\Sp$. Define then $\xi \coloneqq (1-\lambda) \eta + \lambda (f_{\#} \mu_2)$. By construction, $\xi$ is still supported on the subdifferential of $\varphi - |\cdot|^2/2$, so $(\pi_x,\pi_x+\pi_v)_{\#} \xi$ is cyclically monotone, hence optimal.
\end{proof}

\begin{proof}[Proof of \Cref{res:SolIfOrthCompSupp}]
	By definition, the set of measure fields of the form $\lambda \cdot (\pi_x,\pi_y-\pi_x)_{\#} \gamma$, where $\lambda \geqslant 0$ and $\gamma$ is optimal, is dense in $\Tan_{\mu}$ with respect to $W_{\mu}$. The metric scalar product being continuous and positively homogeneous, one has that $\zeta \in \Sol_{\mu}$ if and only if $\left<\zeta, (\pi_x,\pi_y-\pi_x)_{\#} \gamma\right>_{\mu} = 0$ for any $\gamma \in \Gamma_o(\mu,\nu)$ and $\nu \in \Psp_2(\Sp)$. To prove the claim, we have to show that it suffices to consider compactly supported $\nu$. Since we need convergence with respect to $W_{\mu}$, we cannot approximate the target measure by any compactly supported measure and use stability of optimality; hence we construct an explicit approximation. The results on c-transforms that we use may be found in \cite[Section 5]{villaniOptimalTransport2009}, in particular Theorem 5.10.

	Let $\eta \in \Gamma_o(\mu,\nu)$ for some $\nu \in \Psp_2(\Sp)$. Let $\varphi : \Sp \to \mathbb{R} \cup \{\infty\}$ be a proper c-convex Kantorovich potential for the pair $(\mu,\nu)$, and $\varphi^c : y \mapsto \inf_{x \in \Sp} \varphi(x) + |x-y|^2$ its c-transform, which satisfies $\varphi(x) = \sup_{y \in \Sp} \varphi^c(y) - |x-y|^2$. Denote $\Gamma \subset \Sp^2$ the set of $(x,y)$ such that $\varphi^c(y) - \varphi(x) = |x-y|^2$, which contains $\supp \eta$ and is cyclically monotone. Our strategy is to ``truncate'' $\Gamma$ on the $y$ variable, as follows.
	
	For each $R > 0$, define
	\begin{align*}
		\varphi_R(x) 
		\coloneqq \sup_{y \in \overline{\mathscr{B}}(0,R)} \varphi^c(y) - |x - y|^2
		= \sup_{y \in \Sp} \varphi^c(y) - |x - y|^2 - \chi_{R}(y),
	\end{align*}
	where $\chi_R(y) = 0$ if $|y| \leqslant R$, and $+\infty$ otherwise. 
	The function $\varphi_R$ is c-convex by definition, inferior to $\varphi$, lower bounded by a quadratic polynomial, and for any $x_0 \in \dom \varphi$,
	\begin{align*}
		\varphi_R(x) - \varphi_R(x_0)
		\leqslant \sup_{y \in \overline{\mathscr{B}}(0,R)} |x_0 - y|^2 - |x - y|^2
		\leqslant |x_0|^2 + 2 R |x_0 - x| - |x|^2.
	\end{align*}
	Hence $\varphi_R$ is locally bounded, and locally Lipschitz since semiconvex. 
	
	By a classical computation, the set $\Gamma_R \subset \Sp^2$ of pairs $(x,y)$ such that $\varphi_R(x) = \varphi^c(y) - |x-y|^2 - \chi_{R}(y)$ is cyclically monotone, and contained in $\Sp \times \overline{\mathscr{B}}(0,R)$. Note that $\Gamma \cap (\Sp \times \overline{\mathscr{B}}(0,R)) \subset \Gamma_R$; indeed, whenever $(x,y) \in \Gamma$ with $|y| \leqslant R$, then $\varphi(x) = \varphi^c(y) - |x-y|^2 \geqslant \varphi^c(z) - |x-z|^2$ for any $z \in \mathbb{R}^d$. In particular, the supremum on the ball of radius $R$ is also attained at $y$, and $\varphi_R(x) = \varphi(x) = \varphi^c(y) - |x-y|^2 - \chi_{R}(y)$. We deduce that 
	\begin{align}\label{SolIfOrthCompSupp:suppeta}
		\supp \eta \cap \left(\Sp \times \overline{\mathscr{B}}(0,R)\right) \subset \Gamma_R.
	\end{align}
	
	For each $R > 0$, the correspondence $x \mapsto \left\{ y \st (x,y) \in \Gamma_R \right\}$ is upper semicontinuous with compact images. By \cite[\nopp 18.13]{aliprantisInfiniteDimensionalAnalysis2006}, it admits a measurable selection $f_R : \Sp \to \Sp$, that satisfies $|f_R(x)| \leqslant R$ for all $x \in \Sp$. Define $\eta_R$ by replacing the part of $\eta$ that goes out of $\Sp \times \overline{\mathscr{B}}(0,R)$ by the measurable selection $f_R$; explicitly, 
	\begin{align*}
		\eta_R \coloneqq \eta \resmes \left(\Sp \times \overline{\mathscr{B}}(0,R)\right) + (\pi_x, f_R(\pi_x))_{\#} \eta \resmes (\Sp \times \overline{\mathscr{B}}(0,R)^c).
	\end{align*}
	Recalling \Cref{SolIfOrthCompSupp:suppeta}, $\eta_R$ is a probability measure concentrated on the cyclically monotone set $\Gamma_R$, hence an optimal transport plan between its marginals. The measure $\pi_{y \#} \eta_R$ is supported on $\overline{\mathscr{B}}(0,R)$ by construction, and since $|f_R(x)| \leqslant R$, one has
	\begin{align*}
		W_{\mu}^2\left((\pi_x,\pi_y-\pi_x)_{\#} \eta, (\pi_x,\pi_y-\pi_x)_{\#} \eta_R\right)
		\leqslant \int_{x,y \in \Sp, |y| > R} \left|f_R(x)-y\right|^2 d\eta
		\leqslant \int_{y \in \Sp, |y| > R} (R + |y|)^2 d\nu
		\underset{R \to \infty}{\longrightarrow} 0.
	\end{align*}
	As $\left<\zeta, (\pi_x,\pi_y-\pi_x)_{\#} \eta_R\right>_{\mu} = 0$ for any $R$ by assumption, we get that $\left<\zeta, (\pi_x,\pi_y-\pi_x)_{\#} \eta\right>_{\mu} = 0$. Since $\eta$ and $\nu$ were arbitrary, $\zeta$ is solenoidal.
\end{proof}

\paragraph{Conflict of interest} This work benefited from the support of the grant ANR-22-CE40-0010 while the author was in Rouen, and from the ERC starting grant ConFine n°101078057 in Pisa. For the purpose of open access, the author has applied a CC-BY public copyright licence to any Author Accepted Manuscript (AAM) version arising from this submission. Data availability is not applicable to this article as no new data were created or analysed in this study. 

\end{document}